\documentclass[onefignum,onetabnum]{siamonline190516}



\usepackage{lipsum}
\usepackage{amsfonts,amsmath,amssymb}
\usepackage{graphicx}
\usepackage{epstopdf}
\ifpdf
  \DeclareGraphicsExtensions{.eps,.pdf,.png,.jpg}
\else
  \DeclareGraphicsExtensions{.eps}
\fi

\usepackage{enumitem}
\setlist[enumerate]{leftmargin=.5in}
\setlist[itemize]{leftmargin=.5in}


\usepackage{graphicx}
\usepackage{verbatim}
\usepackage{extarrows}
\usepackage{color}
\newcommand{\norm}[1]{\lVert #1 \rVert}
\newcommand{\abs}[1]{\lvert #1 \rvert}
\usepackage{graphicx}

\newcommand{\unOrthProjection}{\mathcal{P}}

\newcommand{\stableProjection}{\mathcal{S}}

\newcommand{\regularityClassOfObservables}{2}
\newcommand{\regularityClassOfdynamics}{3}
\newcommand{\lyapunovdim}{p}

\definecolor{azure}{rgb}{0.0, 0.5, 1.0}
\definecolor{burgundy}{rgb}{0.5, 0.0, 0.13}

\newsiamremark{notation}{Notation}
\newsiamremark{remark}{Remark}
\newsiamremark{hypothesis}{Hypothesis}
\crefname{hypothesis}{Hypothesis}{Hypotheses}
\newsiamthm{claim}{Claim}

\headers{Efficient linear response of chaos}{N. Chandramoorthy and Q. Wang}

\title{Efficient computation of linear response of chaotic attractors with one-dimensional unstable manifolds\thanks{Submitted to the editors DATE.
\funding{This work was funded by the Air Force Office of Scientific Research Grant No. FA8650-19-C-2207 and Department of Energy Research Grant No. DE-FOA-0002068-0018.}}}

\author{Nisha Chandramoorthy\thanks{Center for Computational Science and Engineering, Massachusetts Institute of Technology, Cambridge, MA 
  (\email{nishac@mit.edu}).}
\and Qiqi Wang\thanks{Center for Computational Science and Engineering, Massachusetts Institute of Technology, Cambridge, MA
  (\email{qiqi@mit.edu}).}
}

\usepackage{amsopn}


\ifpdf
\hypersetup{
  pdftitle={Efficient linear response of chaos},
  pdfauthor={N. Chandramoorthy and Q. Wang}
}
\fi




\begin{document}

\maketitle

\begin{abstract}
 The sensitivity of time averages in a chaotic system to an infinitesimal parameter perturbation grows exponentially with the averaging time. However, long-term averages or ensemble statistics often vary differentiably with system parameters. Ruelle's response theory gives a rigorous formula for these parametric derivatives of statistics or {\em linear response}. But, the direct evaluation of this formula is ill-conditioned and hence, linear response, and downstream applications of sensitivity analysis, such as optimization and uncertainty quantification, have been a computational challenge in chaotic dynamical systems. This paper presents the space-split sensitivity or the S3 algorithm to transform Ruelle's formula into a well-conditioned ergodic-averaging computation. We prove a decomposition of Ruelle's formula that is differentiable on the unstable manifold, which we assume to be one-dimensional. This decomposition of Ruelle's formula ensures that one of the resulting terms, the stable contribution, can be computed using a regularized tangent equation, similar to in a non-chaotic system. The remaining term, known as the unstable contribution, is regularized and converted into an efficiently computable ergodic average. In this process, we develop new algorithms, which may be useful beyond linear response, to compute the unstable derivatives of the regularized tangent vector field and the unstable direction. We prove that the S3 algorithm, which combines these computational ingredients that enter the stable and unstable contributions, converges like a Monte Carlo approximation of Ruelle's formula. The algorithm presented here is hence a first step toward full-fledged applications of sensitivity analysis in chaotic systems, wherever such applications have been limited due to lack of availability of long-term sensitivities. 
\end{abstract}
\begin{keywords}
  Uniform hyperbolicity; \and Linear response; \and SRB measure; \and sensitivity analysis
\end{keywords}


\section{Introduction}
\label{sec:intro}
We say that a parameterized family of dynamical systems obeys \emph{linear response} when the infinite-time averages or ergodic 
averages of its smooth observables vary differentiably with the parameter. It was shown by Ruelle \cite{ruelle}\cite{ruelle1} that uniformly hyperbolic maps, which are mathematical idealizations of chaotic attractors, follow linear response; Proposition 8.1 of \cite{gouezel} is a simplified proof using modern transfer operator techniques. Rigorous proofs of linear response have since been extended to uniformly hyperbolic flows \cite{ruelle_hyperbolicflows}, partially hyperbolic systems \cite{dolgopyat}, dissipative stochastic systems \cite{hairer}, stochastically perturbed uniformly hyperbolic systems \cite{lucarini-stochastic} and even to a larger class of stochastic systems with possibly non-hyperbolic unperturbed dynamics \cite{galatolo}, certain nonuniformly hyperbolic systems \cite{bomfim} and intermittent systems \cite{baladi-nonuniform}\cite{baladi-intermittent}\cite{bahsoun}. From the statistical physics point of view, linear response theory has been found to be robust in high-dimensional systems \cite{wormell1}\cite{wormell2}, and has been usefully applied to chaotic systems across disciplines including climate models \cite{lucarini_climate}\cite{courtney}\cite{lucarini-lin-res}, biological systems (see \cite{cessac} for a review of linear response in neuronal networks) and turbulent flows in engineering systems \cite{nisha-luca}\cite{patrick}\cite{angxiu-jfm}\cite{chai}. 

Linear response of a chaotic system quantifies the proportional change in its long-term statistical behavior in response to small parameter perturbations. Apart from providing phenomenological understanding, this measure of long-term sensitivity is immensely useful in practical chaotic systems, which are often high-fidelity numerical simulations, for computational applications such as optimization, uncertainty quantification and parameter selection \cite{shadowing-DA}\cite{patrick}\cite{patrick-rare}\cite{nisha-luca}. Particularly in climate science, theoretical as well as computational studies of violations of linear response and the presence of arbitrarily large linear responses \cite{lucarini-review}\cite{lucarini-tipping}\cite{chekroun}\cite{ruelle_review} are crucial to gain a better understanding of intermittencies and climate tipping points \cite{ashwin}, which are active areas of research. 

Ruelle \cite{ruelle}\cite{ruelle1} established a formula for linear response or the derivative of ergodic averages with respect to parameters. However, a direct evaluation of this formula typically shows a poor convergence rate and is computationally impractical, as previous works have shown \cite{nisha_ES}\cite{eyink}. This poor convergence is due to the exponential growth of infinitesimal perturbations -- the so-called butterfly effect -- which is the defining characteristic of chaotic systems. Due to the butterfly effect, the sensitivity of a state at a time $n$ into the future, to an infinitesimal perturbation to the current state in almost any direction, grows exponentially with $n.$ Now, infinitesimal parameter perturbations may be thought of infinitesimal perturbations to each state applied in different tangent directions. Thus, along any trajectory, the sensitivity to an infinitesimal parameter perturbation also grows exponentially. However, the average of the sensitivities across all trajectories is a bounded quantity at all times. This ensemble-averaged sensitivity is, in fact, exponentially decreasing with time in uniformly hyperbolic systems, and Ruelle's formula for linear response is a series summation of these ensemble-averaged sensitivities. But, ensemble averaging exponentially growing quantities is a computationally challenging task that shows poor convergence.

Traditionally, sensitivities in a dynamical system are estimated by using tangent or adjoint equation solutions, or through automatic differentiation. Since all these methods time-evolve infinitesimal perturbations about a reference trajectory, the sensitivities they compute grow exponentially along any trajectory. Hence, conventional methods for sensitivity analysis have long been recognized as unsuitable in chaotic systems for computing linear response. Some methods circumvent the problem of exponentially growing sensitivities to compute a bounded value for linear response, but they may exhibit a bias \cite{davide}\cite{angxiu}. One such method is least-squares shadowing \cite{qiqi} and its non-intrusive version \cite{angxiu}\cite{angxiu-nilsas}, in which the shadowing lemma (see e.g. Chapter 18 of \cite{katok}) is used to compute sensitivities along a shadowing orbit. However, since shadowing orbits may be nonphysical, i.e., ergodic averages along shadowing orbits may not converge to ensemble averages, the  sensitivities computed by least-squares shadowing are not guaranteed to converge to linear response \cite{nisha-qiqi-shadowing}, although the error may be small for problems with a small ratio of unstable dimension to overall dimension \cite{angxiu-shadowing-error}.

Moreover, the direct evaluation of Ruelle's formula may also be thought of as an adaptation of conventional tangent/adjoint-based sensitivity computations for chaotic systems. This direct evaluation, known as the ensemble sensitivity approach \cite{lea}\cite{eyink}, involves taking a sample average of sensitivities computed by conventional tangent/adjoint methods. However, as we noted earlier, the number of samples needed, to reduce the variance in the exponentially growing sensitivities and compute linear response accurately, makes this approach computationally infeasible. In blended response algorithms \cite{abramov}, the ensemble sensitivity approach for short-time sensitivities is {\em blended} with a fluctuation-dissipation theorem-based approximation of the long-term sensitivities. This approximation is however adhoc since the densities of the SRB measure on unstable manifolds may not follow the fluctuation-dissipation theorem-based approximation, even though linear response holds. 

The purpose of this paper is an efficient computation of linear response in chaotic dynamical systems. We remark that using transfer operator techniques, a rigorous computation of linear response has been developed before, but it has been restricted to low-dimensional expanding maps \cite{bahsoun-computation}. Our aim is to develop a numerical method to evaluate Ruelle's formula that is scalable to high-dimensional practical systems. For this reason, we seek a method to compute Ruelle's formula that is provably convergent and is a computable ergodic average, which does not involve discretization of the phase space. The latter property of trajectory-based computation ensures that the convergence is that of a Monte Carlo computation of Ruelle's formula, at a rate independent of the system dimension. 

In this paper, we develop the space-split senstivity or the S3 method, which is a  scalable, efficient and rigorous computation of linear response. We prove that S3 provably converges in uniformly hyperbolic systems, and the convergence rate is similar to a typical Monte Carlo integration. We focus on uniformly hyperbolic systems with one-dimensional unstable manifolds, but design S3 keeping in mind future extensions to systems that have an unstable manifold of arbitrary dimension. 

\section{Prior work on computation of Ruelle's formula}
\label{sec:priorWork}
Besides previous work referenced in the introduction (section \ref{sec:intro}), in this section we compare and contrast two recent methods on the computation of Ruelle's formula against S3, which is developed in the present paper. In the computable realization of Ruelle's formula that is developed in \cite{nisha-aiaa} (and expanded in \cite{nisha-s3}), we split the perturbation field into its components along unstable and stable subbundles. In uniformly hyperbolic systems, such a direct sum decomposition of a vector field into its components along unstable and stable subbundles exists, and the resulting stable and unstable vector fields are H\"older continuous on the phase space (Chapter 6, 19 of \cite{katok}).

In \cite{nisha-aiaa}\cite{nisha-s3}, a method is then developed to evaluate the two resulting terms of Ruelle's formula. In the evaluation of the response to the unstable component, a finite difference is suggested to compute a part of the unstable divergence term \cite{nisha-aiaa}\cite{nisha-s3}. Besides this cumbersome computation of the unstable divergence, the major limitation of the linear response computation in \cite{nisha-aiaa}\cite{nisha-s3} is a fundamental differentiability assumption  that generally does not hold in uniformly hyperbolic systems. In particular, in the computation of the unstable contribution, the unstable component of the perturbation field is differentiated in the unstable directions, and this derivative is computed partly by finite difference and partly by a recursive formula (section 5 of \cite{nisha-s3}). Underlying this differentiation of the unstable component is the implicit assumption that the stable component of the perturbation is also differentiable along the unstable directions (since the perturbation field, which is the sum of its stable and unstable components, is differentiable by assumption in all directions), and this differentiability does not hold in a generic uniformly hyperbolic system (even in the case of smooth dynamics \cite{hasselblatt_prevalence_1999, pugh}).  

By contrast, to derive S3, we prove a new decomposition of Ruelle's formula -- the S3 decomposition -- which is differentiable on the unstable manifold. The S3 decomposition is not a stable-unstable splitting of the parameter perturbation vector field, which is pursued in \cite{nisha-aiaa}\cite{nisha-s3}.
In a slight abuse of terminology, the two resulting components of the S3 decomposition are still called the stable and unstable contributions to the overall sensitivity, although they do not arise from a stable-unstable splitting of the perturbation field. We develop recursive algorithms to compute the ingredients of both these components. The novel recursive algorithms for the stable and unstable contributions that are developed in this work do not use the covariant Lyapunov vector basis \cite{ginelli-clv-review}, unlike in \cite{nisha-aiaa}\cite{nisha-s3}. We only require an orthonormal basis for the unstable subspace, which is computationally cheaper to obtain, and requires only forward iterations of the dynamics.

A recent work by Ni \cite{angxiu-s3} introduces the linear response algorithm, which appears to be another viable solution to linear response computation. It has been derived for uniformly hyperbolic systems with arbitrary dimensional unstable manifolds. This algorithm provides a ``fast'' computation of the unstable divergence via a recursive formula, analogous to the density gradient for one-dimensional unstable manifolds treated in the present paper. In this linear response algorithm, second-order tangent equations, which are the most expensive step, are solved to differentiate certain vector fields with respect to a modified shadowing direction, which is computed by non-intrusive shadowing \cite{angxiu}. The second-order tangent equations developed in the present paper are derivatives along the one-dimensional unstable manifold. 

The plan for the paper is as follows. In section \ref{sec:background}, we introduce Ruelle's linear response formula and provide the mathematical background for its decomposition and subsequent evaluation via the S3 algorithm. Section \ref{sec:main} is a concise statement of the main contributions of this paper: two theorems concerning the S3 decomposition and evaluation, and the S3 algorithm. In section \ref{sec:splitting}, we derive the S3 decomposition of Ruelle's formula. The computation of the stable contribution that results from the decomposition is discussed in \ref{sec:stableContributionComputation}. An alternative expression of the unstable contribution is derived in section \ref{sec:integrationByParts}, whose computation is tackled in section \ref{sec:unstableContribution}. The validation of S3 on perturbed Baker's maps and Solenoid maps are presented in sections \ref{sec:baker} and \ref{sec:solenoid} respectively. The proofs of the two main theorems are split between sections \ref{sec:proofOfTheorem1} and \ref{sec:proofOfTheorem2}. Section \ref{sec:proofOfTheorem1} proves the existence of the S3 decomposition and its differentiability in the unstable direction, while section \ref{sec:proofOfTheorem2} completes the proof of convergence of the S3 algorithm. In section \ref{sec:conclusion}, we summarize our contributions and present a roadmap for extending the present algorithm to systems with higher-dimensional unstable manifolds.

\section{Preliminaries and problem setup}
\label{sec:background}
Consider a parameterized family $\varphi_s:M\to M$ of $C^\regularityClassOfdynamics$ 
diffeomorphisms of a Riemannian manifold $M,$ which we consider to be specified as a subset of $\mathbb{R}^m.$ Let $s$ be a scalar parameter that can take a small range of values around a reference value, say $s_0$. Corresponding to infinitesimal parameter perturbations at $s_0$, we define a vector field $\chi = d_s|_{s=0} \varphi_s \circ \varphi^{-1}$. 

Suppose $\varphi_s$ exhibits linear response at the reference value $s_0$, which we associate with the unperturbed dynamics. This means that the parametric derivative of long-term averages of $\varphi_s$ exists at the reference value $s_0.$  
Thus, at any $s = s_0 + \delta s$ close to $s_0$, up to first order in $\delta s$, the long-term averages of $\varphi_{s}$ can be expressed using information associated  only to the dynamics $\varphi_{s_0}.$  Let $J \in C^\regularityClassOfObservables(M)$ be an observable of interest, e.g. a lift or a drag in a numerical simulation of a turbulent flow. We are interested in a quantitative determination of how the long-term average of $J$ responds to infinitesimal perturbations in $s,$ at $s_0,$ i.e., to perturbations in the direction $\chi.$ 
\subsection{Ergodic theory and linear response}
The infinite-time 
average of $J$ is defined as $\langle J\rangle(x) := \lim_{N\to\infty} (1/N)\sum_{n=0}^{N-1} (J\circ\varphi^n_s)(x,s),$ for $x \in M.$ 
A probability distribution over the states on $M,$ $\mu_s$ is stationary or $\varphi_s$-invariant, when $\mu_s(\varphi^{-1}_s A) = \mu_s(A),$ for any Borel subset $A \subseteq M.$ When $\mu_s$ is an ergodic, $\varphi_s$-invariant, physical probability distribution for $\varphi_s$, infinite-time averages, also known as ergodic averages, are equal to expectations with respect to $\mu_s,$ at Lebesgue almost every $x$ in the basin of attraction of $\varphi_s.$ The expectation or ensemble average of an observable $J$ with respect to $\mu_s$, which is the Lebesgue integral with respect to the distribution $\mu_s,$ is written as $\langle J, \mu_s\rangle.$ 

That is, in ergodic systems, $\langle J\rangle(x,s)$ is independent of $x$ -- it is only a function of $s$ -- and equal to $\langle J, \mu_s\rangle$ at almost every $x$ in a set of full Lebesgue measure. The quantity we wish to compute is {\em linear response} at $s_0$, which is the parametric derivative of the ergodic/ensemble average, $d_s|_{s_0} \langle J, \mu_s\rangle.$ When $J$ explicitly depends on $s$ and the dependence is smooth, $d_s \langle J, \mu_s\rangle = \langle J, \partial_s\mu_s\rangle + \langle \partial_sJ,\mu_s\rangle$. The second term, $\langle \partial_sJ,\mu_s\rangle$ is simply an ergodic/ensemble average of $\partial_s J,$ which can be computed, using its definition, along almost every orbit: 
$$\langle \partial_s|_{s_0} J,\mu_{s_0}\rangle = \lim_{N\to\infty} (1/N) \sum_{n=0}^{N-1} (\partial_s J)(\varphi^n_{s_0} x,s_0),$$ for Lebesgue almost every $x$ on $M.$ Since this does not pose a computational challenge, we neglect the explicit dependence of $J$ on $s$, and focus on the first term, $d_s|_{s_0} \langle J, \mu_s\rangle = \langle J,\partial_s|_{s_0}\mu_s\rangle.$ 
From here on, we denote linear response at $s_0$ simply as $\langle J, \partial_s \mu_s\rangle,$ for brevity. 
\subsection{Tangent dynamics}
\label{sec:tangentDynamics}
The tangent space at $x \in M$, denoted $T_x M$, is the space of all infinitesimal perturbations to $x,$ which can be identified with $\mathbb{R}^m.$
A vector field on $M$, which is a direction of infinitesimal perturbation to the state at each point on $M$, can be considered as a map from $M$ to $\mathbb{R}^m$.
If $v$ is a vector field and $x\in M$, then $v_x\in T_x M \equiv \mathbb{R}^m$ denotes the value
of the vector field at $x$. Fixing $s$ at its reference value, we denote $\varphi$ the map $\varphi_{s_0}$. Similarly, we refer to the distribution $\mu_{s_0}$ simply as $\mu.$

The matrix $d\varphi^n$ gives the pushforward of a vector field by $\varphi^n$.
That is, $w = d\varphi^n v$ if $w_{\varphi^n x} = d\varphi^n_x\: v_{x}$.  We write $d\varphi^1$ simply as $d\varphi$. In the context of computations, we often fix a particular reference trajectory, say $\left\{x_n\right\}$, where $x_0$ is sampled according to $\mu$. We denote values of a vector field $v$ along the reference trajectory using $v_n := v_{x_n}$ for short; similarly, we write $(d\varphi)_n$ to denote the value of $d\varphi$ at $x_n$. The homogeneous tangent equation tracks the pushforward by $\varphi^n$ along a fixed trajectory, 
\begin{align}
    \label{eqn:homogeneousTangent}
    u_{n+1} = (d\varphi)_n \: u_n, \:\: n\in \mathbb{Z}^+.
\end{align}
At every iteration of the homogeneous tangent equation, starting with $u_0 \neq 0 \in \mathbb{R}^m$, the vector field $u$ is updated to $d\varphi \; u$. When we add a source term to Eq. \ref{eqn:homogeneousTangent}, we refer to the resulting equation as the inhomogeneous tangent equation. For example, when the source term is the parameter perturbation field, $\chi := (\partial_s\varphi_s)(\varphi^{-1}_{s_0}\cdot, s_0),$ the inhomogeneous tangent equation is the conventional tangent equation that is standard in sensitivity analysis,
\begin{align}
    \label{eqn:inhomogeneousTangent}
    u_{n+1} = (d\varphi)_n u_n + \chi_{n+1}. 
\end{align}
In this tangent equation, at every iteration, the vector field $u$ is updated to $d\varphi\: u  + \chi$. In tangent sensitivity analysis, the parametric derivative of time-averages, $\partial_s (1/N) \sum_{n=0}^{N-1} (J\circ\varphi^n_s)(x_0,s_0),$ is usually computed using inhomogeneous tangent solutions (Eq. \ref{eqn:inhomogeneousTangent}) as 
\begin{align*}
    \dfrac{1}{N}\sum_{n=0}^{N-1} \partial_s (J\circ\varphi_s^n)(x_0,s_0) = \dfrac{1}{N} \sum_{n=0}^{N-1} d J(x_n)\cdot u_n.
\end{align*}
Everywhere, $d$ denotes the gradient operator in $\mathbb{R}^m$. For example, in the above equation, $dJ(x_n)= (dJ)_n \in \mathbb T_{x_n}^* M \equiv \mathbb{R}^m$ refers to the gradient of $J$ evaluated at the point $x_n$.  
\subsection{Chaotic systems}
\label{sec:chaos}
The Oseledets multiplicative ergodic theorem (OMET) says that the homogeneous tangent solutions (Eq. \ref{eqn:homogeneousTangent}) grow/decay asymptotically. Further, this asymptotic growth/decay is exponential at a finite number of rates, called the Lyapunov exponents, which, in our setting of ergodic systems, are independent of the starting point $x_0,$ at $\mu_s$ almost every $x_0.$ We denote them in descending order as $\lambda_1 > \lambda_2 > ... > \lambda_\lyapunovdim,$ with $\lyapunovdim \leq d$. A chaotic system, by definition, exhibits at least one positive Lyapunov exponent. 

The tangent space at $\mu$-almost every point $x \in M$  has a direct sum decomposition, $T_x M = \oplus_{i\leq p} E^i_x$, with the subspace $E^i_x$ of tangent vectors having the asymptotic growth/decay rate $\lambda_i.$ In other words, every tangent vector $v_x$ belonging to the
subspace $E^i_x$ is such that $\lim_{n\to\infty}(1/n)\log\|d\varphi^n_x\: v_x\| = \lambda_i.$   
At $\mu$-almost every $x_0,$ almost every choice of $u_0 \in \mathbb{R}^m$ will have a non-zero component on the tangent subspace corresponding to the largest positive Lyapunov exponent. Hence, in chaotic systems, homogeneous tangent solutions starting from every initial condition $u_0$ grow exponentially in time. That is, for almost every $u_0 = u(x_0),$ $\|u_n\| \sim {\cal O}(e^{\lambda_1 n}),$ with $\lambda_1 > 0,$ for large $n.$ The exponent $\lambda_1$, which determines the asymptotic growth factor of the tangent solutions, is the largest among the Lyapunov exponents.

\subsection{Uniform hyperbolicity}
\label{sec:uniformHyperbolicity}
In this paper, we consider our $C^\regularityClassOfdynamics$ diffeomorphism $\varphi:M \to M$ to be equipped with a compact, invariant, hyperbolic attractor $\Lambda,$ which contains a one-dimensional unstable manifold. That is, $\Lambda$ is a compact set and  
$\varphi(\Lambda) = \Lambda.$ We say $\Lambda$ is a hyperbolic set for $\varphi$ if
there exist constants $C > 0$ and $\lambda \in (0,1)$ such that, at every point $x \in M,$ the tangent 
space $T_x M$ has a direct sum decomposition $T_x M = E^u_x\oplus E^s_x,$
where 
\begin{itemize}
\item $E^u_x$ and $E^s_x$ are $\varphi-$invariant, or \emph{covariant}, 
subspaces. That is, $d\varphi_x (E^{u}_x) = E^{u}_{\varphi(x)}$ and $d\varphi_x (E^{s}_x) = E^{s}_{\varphi(x)}.$
\item $E^u_x$ is the 1-dimensional \emph{unstable} subspace consisting of all $v \in T_x M$ 
    such that 
    \begin{align}
    \label{eqn:unstable}
         \norm{d\varphi^n_x\: v} \leq C \: \lambda^n \: \norm{v},
    \end{align}
    for all $n \in \mathbb{Z}^{-},$ and,
\item $E^s_x$ is the $(m-1)$-dimensional \emph{stable} subspace consisting of all $v \in T_x M$ 
    such that 
    \begin{align}
    \label{eqn:stable}
         \norm{d\varphi^n_x\: v} \leq C \: \lambda^n \: \norm{v}
    \end{align}
    for all $n \in \mathbb{Z}^{+}.$
\end{itemize}
Throughout, we use the shorthand $d\varphi_x$ to write the differential of $\varphi$ at $x$, which is a linear map from $T_x M$,
the tangent space at $x$, to $T_{\varphi x } M$, the tangent space at $\varphi x$. Using the standard basis of $\mathbb{R}^m$, $d\varphi_x$ can be represented as an $m\times m$ matrix. 
\begin{notation}
For convenience, we write $f_x$ to denote a scalar, vector or tensor field $f$ evaluated at the point $x\in M.$
\end{notation}
\subsection{The SRB measure}\label{sec:srb}
An ergodic, $\varphi_s-$invariant, physical measure, also known as an SRB measure \cite{srb}, is guaranteed to exist in our setting of uniformly hyperbolic systems. Apart from $\varphi_s-$invariance, we use the ergodicity of the map $\varphi_s$ with respect to $\mu_s$, which implies that there is no subset of the attractor, other than itself, that is invariant under the dynamics and has full $\mu_s$ measure. We also employ the physicality of the SRB measure. This means that ergodic averages starting from Lebesgue-a.e. initial condition chosen in an open set containing the attractor converge to expected values with respect to $\mu_s.$ 

We also exploit exponential decay of correlations with respect to the SRB measure enjoyed by observables in uniformly hyperbolic systems \cite{chernov}\cite{liverani}\cite{young-hyperbolicity}. This means that, for two H\"older continuous observables $J$ and $f$, there is some $c > 0$ and $\delta \in (0,1)$ so that
$\left| \langle J\circ \varphi^n \: f\rangle - \langle J\rangle \langle f\rangle \right|\leq c\: \delta^n,$ for all $n \in \mathbb{Z}^+.$ In the S3 algorithm, we often deal with H\"older continuous functions that have zero expectation, in which case, we use $|\langle J\circ\varphi^n \: f\rangle | \leq c \:\delta^n.$ As a result of exponential decorrelation, H\"older observables also satisfy the central limit theorem (CLT) and the law of the iterated logarithm which implies that for almost every $x \in M,$ the error in the $N$-time ergodic average of a H\"older observable $J$ declines asymptotically as ${\cal O}(\sqrt{\log\log N}/\sqrt{N})$: $|(1/N) \sum_{n=0}^{N-1} J_{\varphi^n x} - \langle J\rangle| \leq c\sqrt{\log\log N}/\sqrt{N},$ for large $N$ and some $c > 0.$

Further, we use the fact that the SRB measure, although typically singular with respect to Lebesgue measure on $\mathbb{R}^m$, has absolutely continuous conditional measures on the unstable manifold. We shall next elaborate on this property as used in the derivation of the S3 algorithm (section \ref{sec:algorithm}). 

\subsection{Parameterization of unstable manifolds}
\label{sec:coordinateSystem}
The unstable subspace is tangent to the local unstable manifolds. Given an $\epsilon > 0,$ a local unstable manifold at an $x \in \Lambda,$ $U_{x,\epsilon}$, contains points whose backward orbits lie $\epsilon$-close to the backward orbit of $x.$ That is, $$U_{x,\epsilon} = \left\{ x'\in M: \|x_{-n} - x'_{-n}\| \leq \epsilon,\;\forall\;n \in \mathbb{Z}^+; \lim_{n\to\infty} \|x_{-n} - x'_{-n}\| = 0 \right\}.$$  
Since $E^u_x$ are one-dimensional subspaces, the local unstable manifolds are also one-dimensional. According to the stable-unstable manifold theorem (see e.g. Theorem 6.2.8 of \cite{katok}, \cite{semyon}), the local unstable manifolds are embedded images of Euclidean spaces of the same dimension, which in this case are real lines. 

In this paper, we work with a particular $C^1$ parameterization of local unstable manifolds. Let $\Xi$ be a measurable partition of $\Lambda$ that is subordinate to the unstable manifold, and let $\Xi_x$ denote the element of the partition containing $x.$ At each $x,$ we can choose an $\epsilon$ depending on $x$ so that $\Xi_x$ contains a local unstable manifold at every $x.$ We choose a parameterization $\Phi^x:[-\epsilon_x,\epsilon_x]\to \Xi_x$ that satisfies the following properties:
\begin{enumerate}
    \item $\Phi^x(0) = x$
    \item $d_\xi \Phi^x(\xi) = q_{x'},$ where $\Phi^x(\xi) = x'$, for all $x'$ in the image of $\Phi^x$.
\end{enumerate}
Such a measurable function $x \to \epsilon_x$ exists that allows the definition of $\Phi^x$ (see Chapter 6 of \cite{katok}; \cite{ledrappier-young}).
Here $q_{x'} \in T_{x'} M$ is the unit vector in the one-dimensional tangent subspace $E^u_{x'}.$ From 2., it follows that $\norm{d_\xi\Phi^x((\Phi^x)^{-1}(x'))} = 1$, for all $x'$ in the image of $\Phi^x.$
Hence we refer to the pointwise coordinate maps $\Phi^x$ as the unit speed parameterization of local unstable manifolds.

\subsection{Iterative differentiation on the unstable manifold}
\label{sec:differentiationOnUnstableManifold}
The orbits of $\varphi^{-1}$ starting in $\Xi_x$ generate corresponding orbits on the real line by this parameterization. More concretely, we define the dynamics on the real line, through the map $\left(\tilde{\varphi}^x\right)^{-1} := \left({\Phi^{(\varphi^{-1} x)}}\right)^{-1} \circ\varphi^{-1}\circ\Phi^x.$ We frequently use the following relationship that can be derived using the chain rule, where $d_\xi$ denotes differentiation with respect to the coordinate $\xi$:
\begin{align}
    \label{eqn:expansionFactorOnRealLine}
    d_\xi \left(\tilde{\varphi}^x\right)^{-1}(\xi) = \dfrac{1}{\alpha_{x'}},
\end{align}
Here $\Phi^x(\xi) = x'$, and $\alpha$ is the scalar field that represents the local expansion factor.
\begin{definition}
We define a scalar field $\alpha:M\to\mathbb{R}^+$ to capture the local expansion of unstable tangent vectors. At each $x \in M,$
\begin{align}
\label{def:localExpansionFactor}
    \alpha_x = \|d\varphi_{\varphi^{-1}x} q_{\varphi^{-1}x}\|.
\end{align}
\end{definition}
Notice that the derivative of $\left(\tilde{\varphi}^x\right)^{-1}$ with respect to the unstable coordinate $\xi$ does not depend on the base point of the coordinate system, $x.$ This fact is crucial to the derivation of the S3 algorithm, where we prominently differentiate scalar and vector fields ``on unstable manifolds'' of a reference orbit. We now describe what this differentiation means in this paper, and a formula useful for performing this differentiation recursively.

Let $x_0 \in M$ be a fixed $\mu$-typical point whose forward orbit serves as the reference orbit for our computation. Let $f:M \to \mathbb{R}$ be a scalar field, and $h$ be the scalar field that represents the derivative of $f$ on the unstable manifold. We define this function, using our coordinate systems centered on $\left\{x_n\right\}$ as follows, $h_{x'_n} := d_\xi (f \circ\Phi^{x_n})(({\Phi^{x_n}})^{-1} (x'_n)),$ for some $x'_n \in \Xi_{x_n}.$ 

Now suppose we wish to compute $h$ recursively along an orbit; the situation where such a computation arises in the S3 algorithm is discussed in section \ref{sec:unstableContribution}. To compute the values $h_{x_n}$ using $\left\{h_{x_m}\right\}_{m< n},$ we use the following iteration that in turn uses Eq. \ref{eqn:expansionFactorOnRealLine},
\begin{align}
\notag 
    d_\xi (f\circ \varphi\circ\Phi^x) (0) &= d_\xi (f \circ \Phi^{\varphi x} \circ \tilde{\varphi}^{\varphi x}) (0) \\
    \label{eqn:chainRuleForPhix}
    &= h_{\varphi x} \: \alpha_{\varphi x}.
\end{align}
We sometimes use the notation $\partial_\xi f$ to denote $h$, which we simply refer to as the unstable derivative of $f$. 
\subsection{Conditional density of the SRB measure on unstable manifolds}
An important property of the SRB measure, which guarantees its physicality, is its absolute continuity along unstable manifolds \cite{srb}. To define this property, we must introduce conditional densities of the SRB measure on the unstable manifold, which we denote $\rho$. We may define the function $\rho$ using our parameterization $\left\{\Phi^x\right\}$ and disintegration to yield a family of functions $\left\{\rho^x\right\}.$ It is worth noting that changes to the parameterization only affect the normalization constant of the densities $\rho^x$. By the disintegration of SRB measure \cite{ledrappier-young}\cite{vaughn}  on the measurable partition $\Xi$ , there exist i) a family of conditional measures denoted 
$\mu^x$, at $\mu$-a.e. $x \in M,$ that are supported on $\Xi_x$, and ii) a quotient measure $\hat{\mu}$ on $M/\Xi,$ such that, for all Borel subsets $A \subseteq M,$
\begin{align}
\label{eqn:disintegration-pre}
    \mu(A) = \int_{M/\Xi} \mu^x(A \cap \Xi_x) \; d\hat{\mu}(x).
\end{align}
An aside on the notation used henceforth: a phase point $x \in M$ that appears in the superscript indicates the point at which our pointwise coordinate system is centered; this is distinct from its appearance on a subscript, which always means evaluation of a scalar/vector field at $x.$ 
Let $\nu^x$ be the normalized pushforward of the Lebesgue measure (uniform probability distribution) on $[0,1],$ by $\Phi^x.$  
The absolute continuity property of the SRB measure on the unstable manifold means that the conditional measure $\mu^x$ is absolutely continuous with respect to $\nu^x$. Thus, a scalar function $\rho^x$ can 
be defined as the \emph{probability density} of $\mu^x.$ In particular,
the unnormalized density $\rho^x_{\rm u}$ of $\mu^x$ has been derived by Pesin \cite{pesin}\cite{miahao} to be,
\begin{align}
    \label{eqn:unnormalizeddensity}
    \rho^x_{\rm u}(x') := \prod_{k=0}^\infty \dfrac{\alpha_{\varphi^{-k} x}}{\alpha_{\varphi^{-k} x'}},
\end{align}
where the local expansion factor $\alpha$ is as defined in Eq. \ref{def:localExpansionFactor}. 
\begin{definition}
\label{defn:density}
The probability density of conditional SRB measures on the unstable manifold is defined as $\rho^x({x'}) = \rho^x_{\rm u}(x')/\bar{\rho}^x,$ for $x' \in \Xi_x,$
where $\bar{\rho}^x := \int_{\Xi_x} \rho^x_{\rm u}(x') 
\; d\nu^{x}(x'),$ is the normalization constant.
\end{definition}
Using the above definition of the density, the disintegration of the SRB measure on the unstable manifold (Eq. \ref{eqn:disintegration-pre}) gives, for any smooth observable $f$,
\begin{align}
    \label{eqn:disintegration}
    \langle f \rangle = \int_{M/\Xi} \int_{\Xi_x} f(x')\; \rho(x')\; d\nu^x(x')\; d\hat{\mu}(x).
\end{align}
\subsection{Linear response of uniformly hyperbolic systems}
\label{sec:linearResponse}
Ruelle \cite{ruelle}\cite{ruelle1} showed that linear response holds and is given by the following formula in uniformly hyperbolic attractors, 
\begin{align}
    \label{eqn:ruellesFormula}
    \langle J, \partial_s \mu_s \rangle 
    = \sum_{k=0}^\infty \langle d (J\circ\varphi^k) \cdot \chi, \mu \rangle,
\end{align}
where, as usual, we have dropped the subscript on $\varphi$ and $\mu$ to indicate their respective values at $s = s_0.$ According to Ruelle's formula (\ref{eqn:ruellesFormula}), linear response or the parametric derivative of statistics, is a series summation of ensemble averages. The sensitivity of the function $J \circ \varphi^k$ to the parameter perturbation $\chi$ is given by $d(J\circ\varphi^k)\cdot \chi,$ and the $k$th term in Ruelle's formula is an ensemble average of this instantaneous sensitivity. With probability 1, the instantaneous sensitivity, $d(J\circ\varphi^k)\cdot \chi,$ grows in norm exponentially with $k$ since $\chi$ almost surely has a non-zero component in $E^u.$ But, due to cancellations over phase space, the ensemble average is bounded at all $k.$ Further, the convergence of Ruelle's series implies that the ensemble average of instantaneous sensitivities decreases asymptotically with time and converges to zero.  

Numerically, the direct evaluation of Ruelle's formula (\ref{eqn:ruellesFormula}) involves approximating each ensemble average (each term in the series) as a sample average of instantaneous sensitivities. These instantaneous sensitivities are, in turn, computed by using the conventional tangent equation (section \ref{sec:tangentDynamics}) or when the dimension of the parameter space is large, using the adjoint equation \cite{angxiu}\cite{eyink}\cite{nisha_ES}. One may also use automatic differentiation in the forward and reverse mode to approximate the tangent and adjoint solutions respectively. However, the resulting sensitivity from any linear perturbation method increases exponentially with $k$ at the rate of the largest Lyapunov exponent, $\lambda_1;$ hence considering the sample average of the sensitivities as a random variable, its variance increases as ${\cal O}(e^{2\lambda_1 k}).$ Thus, the number of samples needed to approximate the integral accurately rapidly increases with $k.$ On the other hand, thresholding the series computation (Eq. \ref{eqn:ruellesFormula}) at a small value of $k$ to reduce the variance introduces a bias. This bias-variance trade-off has been analyzed in previous works \cite{eyink}\cite{nisha_ES}, and the direct evaluation of Ruelle's formula rendered infeasible in practical systems.

\section{Main results: S3 decomposition and computation of Ruelle's response formula}
\label{sec:main}
The two key ideas that enable an efficient computation of Ruelle's formula are i) a particular decomposition of the perturbation field $\chi$ that splits Ruelle's formula into {\em stable} and {\em unstable} contributions, and ii) the development of iterative, ergodic-averaging evaluations of these two components. The first main result concerns the specific decomposition of a given parameter perturbation that we show enables an efficient computation.    
\begin{theorem}
\label{thm:thmS3}
A differentiable vector field $\chi$ has a sequence of decompositions $\chi = a^n\:q \:+\: \: (\chi - a^n\:q)$, where $q$ is the unit vector field tangent to the one-dimensional unstable manifold, such that 
\begin{enumerate}
    \item the sequence of vector fields, $\left\{v^n\right\}_{n \in \mathbb{Z}^+}$ that satisfies
    \begin{align}
        \label{eqn:stableTangentVectorField}
        v^{n+1} &:= d\varphi\: v^n + (\chi - a^{n+1} q) ,\: \;\;\: n \in \mathbb{Z}^+, 
    \end{align}
    with $v^0$ being any bounded vector field that is differentiable on the unstable manifold, converges uniformly; the sequence of scalar fields $\left\{a^n\right\}$ is chosen to orthogonalize $v^n$ to the unstable manifold: $v^n_x\cdot q_x = 0,$ at all $x \in M$ and for all $n \in \mathbb{Z}^+$;
    \item the sequence $\left\{a^n\right\}$ is differentiable on the unstable manifold and uniformly converging;
    and,
    \item the sequence of unstable derivatives of $\left\{a^n\right\}$, namely, $\left\{b^n := \partial_\xi a^n\right\}$ converges uniformly.
\end{enumerate}
\end{theorem}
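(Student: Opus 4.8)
The plan is to analyze the three claims through the splitting of each field into its component along the one-dimensional unstable bundle $E^u = \mathrm{span}(q)$ and its component in $E^s$; I write $\unstableProjection$ and $\stableProjection$ for the oblique projections onto $E^u$ along $E^s$ and onto $E^s$ along $E^u$. Two facts drive every estimate: covariance gives $d\varphi_{\varphi^{-1}y}\,q_{\varphi^{-1}y} = \alpha_y\,q_y$ and $\stableProjection_y\,d\varphi_{\varphi^{-1}y} = d\varphi_{\varphi^{-1}y}\,\stableProjection_{\varphi^{-1}y}$, while hyperbolicity gives $\norm{d\varphi^n|_{E^s}} \leq C\lambda^n$. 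The structural observation on which everything rests is that subtracting $a^{n+1}q$ alters only the $E^u$-component, so the stable part of the iteration decouples: writing $v^n = c^n q + w^n$ with $w^n = \stableProjection v^n$, the recurrence reduces to
\begin{align*}
 w^{n+1}_y = d\varphi_{\varphi^{-1}y}\, w^n_{\varphi^{-1}y} + (\stableProjection\chi)_y,
\end{align*}
which no longer involves $\{a^n\}$.

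To prove claim 1, I would note that $w\mapsto d\varphi\,w + \stableProjection\chi$ on bounded $E^s$-valued fields is an eventual contraction in the sup norm, since the $n$-fold composition of its linear part is bounded by $C\lambda^n\to 0$; hence $w^n$ converges uniformly to the unique bounded fixed point irrespective of $v^0$. The orthogonality requirement $v^{n+1}\cdot q = 0$ then forces $c^{n+1} = -\,w^{n+1}\cdot q$, so $c^n$, and therefore $v^n = c^n q + w^n$, converges uniformly. Dotting the defining recurrence with $q$ and using $\norm{q}=1$ gives the closed form $a^{n+1}_y = (d\varphi_{\varphi^{-1}y} v^n_{\varphi^{-1}y})\cdot q_y + \chi_y\cdot q_y$, from which the uniform convergence of $\{a^n\}$ asserted in claim 2 is immediate once $\{v^n\}$ has converged.

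The substance of claims 2 and 3 is differentiability along the unstable manifold together with convergence of the derivative sequence. First I would show, by induction on $n$, that $v^n$ and $a^n$ are differentiable on the unstable manifold: the inductive step differentiates only ingredients that are $C^1$ along $\Xi$ for a $C^\regularityClassOfdynamics$ map---namely $d\varphi$, $q$, $\alpha$ and $\chi$---and uses the base change of Eq. \ref{eqn:expansionFactorOnRealLine}, by which advancing the unstable coordinate at $y$ corresponds to advancing it at $\varphi^{-1}y$ with the scale $1/\alpha_y$. Differentiating the recurrence then yields
\begin{align*}
 \partial_\xi v^{n+1}_y = d\varphi_{\varphi^{-1}y}\,\tfrac{1}{\alpha_y}\,(\partial_\xi v^n)_{\varphi^{-1}y} + S^{n+1}_y - b^{n+1}_y\,q_y,
\end{align*}
where $S^{n+1}$ collects $(\partial_\xi d\varphi)\tfrac{1}{\alpha}v^n$, $\partial_\xi\chi$ and $-a^{n+1}\partial_\xi q$, each of which converges uniformly by the previous paragraph.

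The hard part will be controlling this derivative recurrence, because the transport operator $d\varphi\,\tfrac{1}{\alpha}$ contracts $E^s$---its $n$-fold composition combining $\norm{d\varphi^n|_{E^s}}\leq C\lambda^n$ with the decaying cumulative factor $\prod 1/\alpha = 1/\norm{d\varphi^n q}$---yet is exactly neutral on $E^u$, so no contraction is available on all of $\mathbb{R}^m$. The resolution is to project: applying $\stableProjection$ kills the $b^{n+1}q$ term (as $\stableProjection q = 0$) and commutes past $d\varphi\,\tfrac1\alpha$, leaving a genuinely contracting recurrence for $\stableProjection\partial_\xi v^n$ with convergent source, so $\stableProjection\partial_\xi v^n$ converges uniformly; the remaining $E^u$-coefficient is never iterated but is pinned by differentiating the constraint, $\partial_\xi v^{n+1}\cdot q = -\,v^{n+1}\cdot\partial_\xi q$, whose right side already converges. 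Hence $\partial_\xi v^n$ converges uniformly. Finally, dotting the differentiated recurrence with $q$ and using $\partial_\xi q\cdot q = 0$ isolates
\begin{align*}
 b^{n+1}_y = \partial_\xi\!\big(d\varphi_{\varphi^{-1}y} v^n_{\varphi^{-1}y}\big)\cdot q_y + \partial_\xi\chi_y\cdot q_y + v^{n+1}_y\cdot\partial_\xi q_y,
\end{align*}
whose right-hand side converges uniformly by the foregoing, establishing claim 3 and the differentiability asserted in claim 2.
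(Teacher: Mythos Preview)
Your argument is correct, and it takes a genuinely different route from the paper's. The paper works throughout with the \emph{orthogonal} projector $\unOrthProjection = I - qq^T$: it writes $v^{n+1} = \unOrthProjection(d\varphi\,v^n + \chi)$ and proves the algebraic identity $(\unOrthProjection\,d\varphi)^k = \unOrthProjection\,d\varphi^k\,\stableProjection$ (via $\unOrthProjection\stableProjection = \unOrthProjection$, $\stableProjection\unOrthProjection = \stableProjection$ and covariance) to extract the exponential contraction; the same identity is reused for the derivative recurrence $y^{n+1} = \tfrac{1}{\alpha}\unOrthProjection\,d\varphi\,y^n + \zeta^{n+1}$. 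You instead apply the \emph{oblique} projector $\stableProjection$ directly, which decouples the $E^s$-part of both $v^n$ and $\partial_\xi v^n$ into autonomous contracting recurrences, and then recover the $q$-component algebraically from the orthogonality constraint. This is more transparent---no operator identity is needed, and the reason the scheme works (the $a^n q$ subtraction is invisible to $\stableProjection$) is made explicit---at the cost of requiring you to reconstruct the full $\partial_\xi v^n$ from its oblique pieces before reading off $b^{n+1}$.

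Two points where the paper is more self-contained than your sketch: you invoke that $q$ (hence $\alpha$ and $\partial_\xi q$) is $C^1$ along the unstable leaves as a known consequence of $\varphi\in C^{\regularityClassOfdynamics}$, whereas the paper proves this constructively (its Lemma on the self-derivative $w = \partial_\xi q$, showing $w$ arises as the fixed point of a contracting second-order recursion). Similarly, the paper tracks that the source terms $\zeta^n$ converge \emph{exponentially}, which feeds into its error analysis for the algorithm; your observation that mere uniform convergence of the source suffices for convergence of the iterate is correct for the theorem as stated, but would need sharpening if rates were required downstream.
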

Let the limits of the sequences we introduced in Theorem \ref{thm:thmS3} be denoted as follows: $a := \lim_{n\to\infty} a^n$, $v := \lim_{n\to\infty} v^n$ and $b := \lim_{n\to\infty} b^n$. From Theorem \ref{thm:thmS3}-2. and \ref{thm:thmS3}-3., the scalar field $b$ is the unstable derivative of $a$, i.e., $ b = \partial_\xi a$. We use the sequence $\left\{v^n\right\}$ to split Ruelle's formula into two computable infinite series. Fixing some finite $K$ that controls the accuracy of the implementation, the S3 decomposition can be construed as a sequence of decompositions of $\chi$ into $\chi = (a^{K-k}\:q) + (\chi - a^{K-k}\:q), \;0\leq k < K$, wherein the former component is tangent to the unstable manifold. This gives the following decomposition, which converges to Ruelle's formula, as $K \to\infty,$  
    \begin{align}
    \label{eqn:splitRuellesFormula1}
    \langle J, \partial_s \mu_s \rangle &=  \lim_{K\to\infty} \sum_{k=0}^{K-1} \langle d(J\circ\varphi^k)\cdot (\chi - a^{K-k} \: q), \mu\rangle + \lim_{K\to\infty}\sum_{k=0}^{K-1} \langle d(J\circ\varphi^k) \cdot a^{K-k}\:q, \mu\rangle.
\end{align}
We refer to the first term on the right hand side as the {\em stable contribution}, and the second as the {\em unstable contribution} to the overall sensitivity. In section \ref{sec:splitting}, we use Theorem \ref{thm:thmS3} to show that the stable and unstable contributions can be alternatively expressed in the following forms that are amenable to their computation.
\begin{align}
\label{eqn:splitRuellesFormula2}
   \langle J, \partial_s \mu_s \rangle &= \langle dJ\cdot v, \mu\rangle  -\lim_{K\to\infty} \sum_{k=0}^{K-1} \langle J\circ\varphi^k \; (a^{K-k}\: g + b^{K-k}) , \mu \rangle,
\end{align}
where we have introduced the {\em logarithmic density gradient} $g$, which is defined, for $x \in \Xi_{x'},$ as 
\begin{align}
\label{eqn:definitionOfg}
    g_{x} := \dfrac{1}{\rho^{x'}(x)} \frac{d(\rho^{x'} \circ \Phi^{x'})}{d\xi}((\Phi^{x'})^{-1}(x)).
\end{align} 

The intuition for the S3 decomposition is developed in section \ref{sec:splitting}, and the
proof of Theorem \ref{thm:thmS3} is presented in section \ref{sec:proofOfTheorem1}.  

\subsection{The S3 algorithm}
\label{sec:algorithm}
The S3 algorithm is an efficient computation of the above split Ruelle's formula (Eq. \ref{eqn:splitRuellesFormula2}). The computation of the different ingredients of this new formula as an efficient ergodic average is the other major contribution of this paper. In particular, we develop iterative algorithms to compute $v$, $g$ and $b$ along trajectories. Before we detail the S3 algorithm, we recall an important notation used throughout this paper. 
\begin{notation}
When a $\mu$-typical phase point $x_0 \in M$ is fixed, the subscript $n$ applied to a scalar function or a vector field, denoted $f$, refers to its corresponding value at $x_n.$ That is, when $x_0 \in M$ is fixed, $f_n := f_{\varphi^n x_0} = f_{x_n}.$
\end{notation}
The S3 algorithm is as follows:
\begin{enumerate}
    \item Obtain a long primal trajectory $x_{-K'},\cdots,x_{N-1},$ where $x_{n+1} = \varphi x_n,\;\; -K'\leq n\leq (N-1),$ with $x_{-K'}$ chosen $\mu$-a.e.
    \item Obtain, at each point $x_n$, the unit tangent vector to the unstable manifold, $q_n$. The following procedure converges exponentially in $n$ to the true value of $q_n$. Solve the homogeneous tangent equation with repeated normalization. That is, solve
    \begin{align}
        \alpha_{n+1}\: q_{n+1} = \left(d\varphi\right)_n\: q_n,\; n=-K',\cdots,0,1,\cdots,
    \end{align}
    with $q_0$ being set to a random vector in $\mathbb{R}^m,$ and $
    \alpha_{n+1} = \|(d\varphi)_n\: q_n\|.$
    \item Solve for $v_n$ the following inhomogeneous tangent equation, which repeatedly projects $v_n$ out of the unstable subspace,
    \begin{align}
        \label{eqn:stableTangentEquationInsideAlgorithm}
        v_{n+1} = \left(d\varphi\right)_n v_n + \chi_{n+1} - a_{n+1} q_{n+1}, \; n=-K',\cdots,0,1,\cdots,
    \end{align}
    where $a_{n+1}$ is such that $v_{n+1}\cdot q_{n+1} = 0.$ The initial condition $v_{-K'}$ is set to $0 \in \mathbb{R}^m.$ This equation is henceforth called the {\em regularized} tangent equation. The error in the solutions $v_n$ when compared to the true value of the vector field $v$ (Theorem \ref{thm:thmS3}) decreases exponentially with $n$ (as shown in Lemma \ref{lem:existenceAndUniquenessOfv}).
    \item Solve the following second-order tangent equation for $p_n$, starting with $p_{-K'} =0 \in \mathbb{R}^m$,
    \begin{align}
    \label{eqn:selfderivativeOfUnstableDirection}
    p_{n+1} = \dfrac{(d^2\varphi)_n(q_n, q_n) + (d\varphi)_n p_n}{\alpha_{n+1}^2},\; n = -K',\cdots,0,1,\cdots,
\end{align}
The solutions $p_n$ converge exponentially with $n$ to the true values of the vector field $p$ along the orbit (Lemma \ref{lem:existenceAndUniquenessOfw}). 
    \item Solve the following recursive second-order tangent equation for $y_n$, for each $n \in \mathbb{Z}^+,$
    \begin{align}
    \label{eqn:derivativeOfStableTangent}
    y_{n+1} &=  \dfrac{(d^2\varphi)_n(q_n, v_n) + (d\varphi)_n y_n}{\alpha_{n+1}} + (d\chi)_{n+1}\: q_{n+1} \\
    \notag 
    &- c_{n+1} q_{n+1} - a_{n+1} p_{n+1}, \;n= -K',\cdots,0,\cdots,
    \end{align}
    where the scalar $c_{n+1}$ is found using the relationship $y_{n+1}\cdot q_{n+1} = - v_{n+1}\cdot p_{n+1}.$ This relationship follows from taking the unstable derivative of $v_{n+1} \cdot q_{n+1} = 0.$ The $y_{n}$ and $c_{n}$ computed using this procedure also converge exponentially with $n$ to their true values, as shown in Lemma \ref{lem:existenceAndUniquenessOfy} and \ref{lem:convergenceOfb} respectively.  
    \item Compute the unstable contribution as the following ergodic average:
    \begin{align}
        \label{eqn:unstableContributionFinalExpression}
        \langle J,\partial_s\mu_s\rangle^{\rm u}  \approx
        -\dfrac{1}{N}\sum_{k=0}^{K-1} \sum_{n=0}^{N-1} J_{n+k} c_n.
    \end{align}
    \item Compute the stable contribution as the following ergodic average:
    \begin{align}
        \label{eqn:stableContributionFinalExpression}
        \langle J,\partial_s\mu_s\rangle^{\rm s}  \approx \dfrac{1}{N}\sum_{n=0}^{N-1} (dJ)_n \cdot v_n.
    \end{align}
    \item The output of the S3 algorithm is the sum of the stable and unstable contributions,
    $\langle J,\partial_s\mu_s\rangle^{\rm u} + \langle J,\partial_s\mu_s\rangle^{\rm s}$.
\end{enumerate}
The following theorem establishes the convergence of the above S3 algorithm.
\begin{theorem}
\label{thm:thmS3Computation}
The ergodic averages in Eq. \ref{eqn:unstableContributionFinalExpression} and Eq. \ref{eqn:stableContributionFinalExpression} converge to the unstable and stable contributions respectively. In particular, 
\begin{enumerate}
    \item for $\mu$-a.e. $x_0$ and for almost every $q_0$, 
    \begin{align}
   \langle J, \partial_s \mu_s \rangle^{\rm u} 
   \label{eqn:unstableContribution}
   &= -\lim_{K\to\infty}\lim_{N\to \infty} \sum_{k=0}^{K-1}     \dfrac{1}{N}\sum_{n=0}^{N-1}  J_{n+k} \; c_n ;
    \end{align}
    \item And, for $\mu$-a.e. $x_0,$     
    \begin{align}
          \langle J,\partial_s\mu_s\rangle^{\rm s} &:= 
         \label{eqn:stableContribution}
         \lim_{N\to\infty} \dfrac{1}{N}\sum_{n=0}^{N-1} (dJ)_n \cdot v_n.
    \end{align}
  
\end{enumerate}
\end{theorem}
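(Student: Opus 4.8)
The plan is to handle the two contributions separately, reducing each to Birkhoff's ergodic theorem applied to a H\"older observable and then absorbing the difference between the fields actually produced by the algorithm (with finite spin-up $K'$) and the true limiting fields of Theorem \ref{thm:thmS3} using the exponential convergence of the lemmas of Section \ref{sec:unstableContribution} together with the exponential decay of correlations of the SRB measure. For the stable contribution (statement 2), I would first note that the limit field $v$ is bounded and H\"older, so $x\mapsto dJ_x\cdot v_x$ is a legitimate observable; ergodicity then gives, for $\mu$-a.e.\ $x_0$, that $(1/N)\sum_{n=0}^{N-1}(dJ)_{x_n}\cdot v_{x_n}\to\langle dJ\cdot v,\mu\rangle$, which by Eq.\ \ref{eqn:splitRuellesFormula2} is exactly the stable contribution. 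It then remains to replace the true $v_{x_n}$ by the computed $v_n$ of Eq.\ \ref{eqn:stableTangentEquationInsideAlgorithm}: Lemma \ref{lem:existenceAndUniquenessOfv} gives $\norm{v_n-v_{x_n}}\le C\lambda^{\,n+K'}$, so the gap between the two averages is at most $(\norm{dJ}_\infty/N)\sum_n C\lambda^{\,n+K'}\le C'\lambda^{K'}/((1-\lambda)N)\to 0$. The $O(1/\sqrt N)$ rate then follows from the central limit theorem for the H\"older observable $dJ\cdot v$.

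For the unstable contribution (statement 1) I would fix $K$ and send $N\to\infty$ first, in the order written in Eq.\ \ref{eqn:unstableContribution}. Writing $J_{n+k}=(J\circ\varphi^k)(x_n)$ and invoking Lemma \ref{lem:convergenceOfb} to replace the computed $c_n$ by the true field value $c_{x_n}$ (the error being exponentially small in $n+K'$ and hence negligible under the $1/N$ average), each inner sum becomes a Birkhoff average of the H\"older observable $(J\circ\varphi^k)\,c$. As there are only finitely many values of $k$, the finite sum commutes with the limit, giving, for $\mu$-a.e.\ $x_0$ and a.e.\ $q_0$,
\[
\lim_{N\to\infty}\sum_{k=0}^{K-1}\frac1N\sum_{n=0}^{N-1}J_{n+k}\,c_n=\sum_{k=0}^{K-1}\langle(J\circ\varphi^k)\,c,\mu\rangle .
\]
Here $c$ is the field to which the algorithm converges; as established in Section \ref{sec:unstableContribution} it equals $c=a\,g+b$, equivalently $c=\rho^{-1}\,\partial_\xi(a\rho)$, using $b=\partial_\xi a$ from Theorem \ref{thm:thmS3} and the definition of $g$ in Eq.\ \ref{eqn:definitionOfg}. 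In particular the disintegration Eq.\ \ref{eqn:disintegration} and the vanishing of the fibrewise boundary terms give $\langle c,\mu\rangle=0$, whence by decay of correlations $\langle(J\circ\varphi^k)\,c,\mu\rangle$ decays exponentially in $k$ and the series is absolutely convergent.

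The remaining, and hardest, step is to send $K\to\infty$ and match the result with the unstable contribution as \emph{defined} in Eq.\ \ref{eqn:splitRuellesFormula2}, namely $-\lim_K\sum_{k=0}^{K-1}\langle(J\circ\varphi^k)(a^{K-k}g+b^{K-k}),\mu\rangle$. The obstruction is that the algorithm uses the single converged field $c=ag+b$ for every $k$, whereas the definition uses the truncation-dependent field $c^{K-k}:=a^{K-k}g+b^{K-k}$. Setting $\Delta^m:=c-c^m=\rho^{-1}\partial_\xi((a-a^m)\rho)$, Theorem \ref{thm:thmS3} gives $\norm{\Delta^m}\le C\lambda^m$, while the same boundary argument gives $\langle\Delta^m,\mu\rangle=0$ for every $m$. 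After re-indexing $j=K-k$, I would estimate
\[
\sum_{k=0}^{K-1}\langle(J\circ\varphi^k)\,\Delta^{K-k},\mu\rangle=\sum_{j=1}^{K}\langle(J\circ\varphi^{K-j})\,\Delta^{j},\mu\rangle ,
\]
and apply decay of correlations: $\abs{\langle(J\circ\varphi^{K-j})\Delta^j,\mu\rangle-\langle J,\mu\rangle\langle\Delta^j,\mu\rangle}\le c\,\delta^{K-j}\norm{\Delta^j}\le C\,\delta^{K-j}\lambda^{j}$. Because $\langle\Delta^j,\mu\rangle=0$, the leading term drops out and the convolution sum $\sum_{j=1}^{K}\delta^{K-j}\lambda^{j}$ tends to $0$ as $K\to\infty$ for every relative size of $\delta$ and $\lambda$. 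Hence the two $K$-limits coincide and both equal $-\sum_{k=0}^\infty\langle(J\circ\varphi^k)c,\mu\rangle$, which proves statement 1.

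I expect this last reconciliation to be the main obstacle, since it requires coupling the exponential convergence of $\{a^n\},\{b^n\}$ to exponential decay of correlations and, crucially, the fact that each $\Delta^m$ integrates to zero against $\mu$. A technical point that must be handled carefully is that decay of correlations is stated for H\"older observables, so the bound $\norm{\Delta^m}\le C\lambda^m$ should hold in a H\"older (or $C^1$) norm rather than merely the sup norm; I would obtain this by upgrading the convergence in Theorem \ref{thm:thmS3} and Lemma \ref{lem:convergenceOfb} to the corresponding regularity using the contraction estimates already behind those statements. Throughout, the $O(1/\sqrt N)$ Monte-Carlo rate in $N$ comes from the central limit theorem while the truncation error in $K$ decays exponentially, so that the overall error of the S3 algorithm behaves like that of a Monte-Carlo quadrature of Ruelle's series Eq.\ \ref{eqn:ruellesFormula}.
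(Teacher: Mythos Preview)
Your proposal is correct and follows essentially the same route as the paper: for the stable part, exponential convergence of the regularized tangent field (Lemma \ref{lem:existenceAndUniquenessOfv}) plus the ergodic theorem/CLT for the H\"older observable $dJ\cdot v$; for the unstable part, fix $K$, replace the computed $c_n$ by the limiting field $c=ag+b$ using the exponential convergence of Lemmas \ref{lem:convergenceOfa}, \ref{lem:convergenceOfb}, \ref{lem:convergenceOfg}, take ergodic averages termwise, and then reconcile $\sum_k\langle(J\circ\varphi^k)c,\mu\rangle$ with the definition $\sum_k\langle(J\circ\varphi^k)(a^{K-k}g+b^{K-k}),\mu\rangle$ via decay of correlations.

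Two minor differences are worth noting. First, the paper does not verify $\langle\Delta^m,\mu\rangle=0$ or argue with H\"older norms as you do; instead it simply \emph{assumes} (Section \ref{sec:unstableContributionComputation}) a ``strong form'' of correlation decay in which the bound scales with the sup norm $\|l\|$, and applies it directly to $l=\Delta^{K-k}$, obtaining a bound of order $\lambda^K$ for the whole sum. Your route---proving $\langle\Delta^m,\mu\rangle=0$ from $\Delta^m=\rho^{-1}\partial_\xi((a-a^m)\rho)$ and the vanishing boundary terms, then using standard decay of correlations with a convolution estimate $\sum_j\delta^{K-j}\lambda^j\to 0$---is more self-contained and does not need that extra hypothesis, at the cost of having to upgrade the convergence of $a^m,b^m$ to a H\"older norm, which you correctly flag as the main technical point. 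Second, the paper places the reconciliation step earlier (Section \ref{sec:unstableContributionComputation}, leading to Eq.\ \ref{eqn:approximateUnstableContribution}) and then in Section \ref{sec:unstableContributionProof} only has to match the algorithm to $-\sum_k\langle(J\circ\varphi^k)c,\mu\rangle$; you do both steps together, which is fine.
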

We prove Theorem \ref{thm:thmS3Computation} in section \ref{sec:proofOfTheorem2}. The efficiency of the S3 algorithm in comparison to a direct evaluation of Ruelle's formula (Eq. \ref{eqn:ruellesFormula}) stems from the following. The integrand in Ruelle's original series increases in norm exponentially with $k.$ This makes the ergodic averaging approximation of the integral computationally inefficient, as we noted in section \ref{sec:linearResponse}. By contrast, the norm of the integrand in the S3 modification of the formula (Eq. \ref{eqn:splitRuellesFormula2}) is uniformly bounded in both the stable and unstable contributions. Thus, efficient ergodic-averaging approximations are possible for the S3 formula (Eq. \ref{eqn:splitRuellesFormula2}). 
\section{Derivation and computation of the S3 formula}
\label{sec:splitting}
In this section and the next, we describe the S3 decomposition of
Ruelle's formula (Eq. \ref{eqn:splitRuellesFormula1}), 
the derivation of its regularized form (Eq. \ref{eqn:splitRuellesFormula2}), and the computation of the latter. In order to derive an efficient computation of Ruelle's formula, we split Ruelle's formula into two parts, using a particular decomposition of the 
vector field. The contribution to the overall sensitivity made by the unstable component, which is aligned with $q$, is called the unstable contribution; the remaining term is the stable contribution (Eq. \ref{eqn:splitRuellesFormula1}).

However, we do not decompose $\chi$ into its stable and unstable components. That is, although $aq \in E^u$ clearly, $a$ is not chosen such that $\chi - aq$ belongs to $E^s$. Instead, the scalar field $a$ is chosen so as to be differentiable on the unstable manifold and such that both the stable and unstable contributions lead to well-conditioned computations. The significance of the differentiability of $a$ on the unstable manifold will be clear in the derivation below.

\subsection{Regularizing tangent equation solutions}
The S3 decomposition can be motivated as a means of regularizing the solutions of a conventional tangent equation. Fixing a reference orbit $\left\{x_n\right\}_{n\in \mathbb{Z}^+}$, consider the conventional tangent equation (Eq. \ref{eqn:inhomogeneousTangent}), which is a recursive equation for $u_n := (\partial_s \varphi^n_s)(x_0,s)$, starting from $u_0 = 0 \in \mathbb{R}^m$. As noted in section \ref{sec:tangentDynamics}, this equation gives the evolution of a tangent vector corresponding to the parameter perturbation along a fixed trajectory, $\left\{ x_n\right\}_{n\in \mathbb{Z}^+}.$ By construction (Eq. \ref{eqn:inhomogeneousTangent}), we can see that $u_n = \sum_{k=0}^{n-1} (d\varphi^k)_{n-k} \: \chi_{n-k}.$ By definition of chaos (section \ref{sec:chaos}), $\|d\varphi^k_x \chi_x\| \sim {\cal O}(e^{\lambda_1 k}),$ for almost every $x \in M,$ and almost every perturbation $\chi_x \in \mathbb{R}^m.$ Hence, for large $n,$ $\|u_n\| \sim {\cal O}(e^{\lambda_1 n})$ for almost every perturbation $\chi$. On the other hand, if we projected out the unstable component of the tangent solution at each timestep, the solution does not exhibit exponential growth. That is, the following iteration is stable
\begin{align}
    \label{eqn:stableTangentEquation}
    v_{n+1} = (d\varphi)_{n} v_{n} + (\chi_{n+1} - a_{n+1} q_{n+1}),
\end{align}
where $a_{n+1}$ is such that $v_{n+1}\cdot q_{n+1} = 0,$ for all $n \in \mathbb{Z}^+.$
That is, the solution $v_{n}$ of the above tangent equation along with the repeated projections out of the unstable subspace, is in $E^{u\perp}_{x_n}$ at each $n.$ We refer to the solutions $\left\{v_n\right\}$ as {\em regularized tangent solutions.} With this stable iterative procedure as the motivation, we derive a splitting of Ruelle's formula. One part of the split formula -- the stable contribution -- will be computed using the regularized tangent solution, Eq. \ref{eqn:stableTangentEquation}.
\subsection{Alternative expression of the stable contribution}
\label{sec:stableContribution}
In the regularized tangent equation (Eq. \ref{eqn:stableTangentEquation}), a tangent vector is projected out of the unstable subspace at every iteration. At every $x \in M,$ we introduce the orthogonal projection operator, $\unOrthProjection_x$, to refer to this operation.
\begin{notation}
Let $\unOrthProjection_x: T_x M \to T_x M$ denote the linear operator
that projects a tangent vector onto the hyperplane
orthogonal to $E^u_x.$ If $q_x$ is a unit vector in $E^u_x$, then for any $v_x\in T_x M$,
$\unOrthProjection_x\:v_x := (I - q_x q_x^T) v_x.$
Applying $\unOrthProjection_x$ to every point on $M$ would result in a linear operator, hereafter denoted $\unOrthProjection$, on vector
fields of $M$.
\end{notation}

Using this notation, we can reproduce the operations performed by solving Eq. \ref{eqn:stableTangentEquation}, by considering a sequence of vector fields $\left\{ v^n\right\}$ that satisfies the following recurrence relation,
\begin{align}
    \label{eqn:regularizedTangentEquation}
    v^{n+1} = \unOrthProjection (d\varphi\: v^n + \chi), \;\;n\in \mathbb{Z}^+.  
\end{align}
We also introduce a sequence of scalar fields which denote the components on the unstable subspace that are projected out every iteration. That is, let $\left\{ a^n\right\}$ be a sequence of scalar fields given by,
\begin{align}
\label{eqn:an}
    a^{n+1} := q^T (d\varphi\: v^n + \chi). 
\end{align}
In Lemma \ref{lem:existenceAndUniquenessOfv}, we show that the sequence $\left\{v^n\right\}$ converges uniformly, starting from any bounded vector field $v^0;$ the limit of this sequence is denoted $v.$ The uniform convergence of $\left\{ v^n\right\}$ implies the uniform convergence of the scalar field $\left\{a^n\right\}$, as we show in Lemma \ref{lem:convergenceOfa}; the limit of this sequence is denoted $a$. Using these results and uniform hyperbolicity, we prove the following alternative formula for the stable contribution in Proposition \ref{lem:convergenceOfStableContribution}:
\begin{align}
\notag 
    \langle J, \partial_s \mu_s \rangle^{\rm s} &:= \lim_{K\to\infty} \sum_{k=0}^{K-1} \langle d(J\circ\varphi^k)\cdot (\chi - a^{K-k}\: q), \mu \rangle \\
\label{eqn:stableContributionAlternativeExpression}
    &= \langle dJ\cdot v, \mu_s \rangle.
\end{align}
\subsection{Computation of the stable contribution}
\label{sec:stableContributionComputation}
That is, the series summation representing the stable contribution is simply an ensemble average of $dJ\cdot v,$ an inner product of two bounded vector fields. That is, for $x_0$ chosen $\mu$-a.e.,
\begin{align}
 \label{eqn:stableContributionErgodicAverage}
    \langle J, \partial_s \mu_s\rangle 
    = \lim_{N\to\infty} \dfrac{1}{N}\sum_{n=0}^{N-1}
    (dJ)_n \cdot v_n. 
\end{align}
The values of the vector field $v$ along the reference orbit are approached by the regularized tangent solution (Eq. \ref{eqn:stableTangentEquation}). Without loss of generality, we effect starting the recursive Eq. \ref{eqn:regularizedTangentEquation} from a zero vector field, by choosing $v_{-K'} = 0 \in \mathbb{R}^m$, at the point $x_{-K'}.$ For a large run-up time $K'$, the solution at 0, $v_0$ and $a_0$ are already close to the true values of $v$ and $a$ at $x_0$. The solutions thus produced by long-time evolution of the regularized tangent equation (Eq. \ref{eqn:stableTangentEquation}) become exponentially more accurate, and are used to evaluate the stable contribution as per Eq. \ref{eqn:stableContributionErgodicAverage}. In practice, we solve the regularized tangent equation (Eq. \ref{eqn:stableTangentEquation}) over an orbit of a long but finite length $N$. This computes an approximation of the limit on the right hand side up to a finite $N.$ Proposition \ref{lem:convergenceOfStableContribution} shows that such a computation of the stable contribution converges to its true value (Eq. \ref{eqn:stableContributionAlternativeExpression}) as $N \to \infty$; the asymptotic error convergence is as ${\cal O}(1/\sqrt{N}).$
\subsection{Alternative expression of the unstable contribution}
\label{sec:integrationByParts}
Recall that the unstable contribution is the sensitivity due to the perturbation along the unstable subspace,
\begin{align}
\label{eqn:unstableContributionFirst}
    \langle J, \partial_s \mu_s\rangle^{\rm u} = \lim_{K\to\infty} \sum_{k=0}^{K-1} \langle d(J\circ\varphi^k)\cdot a^{K-k}\:q, \mu\rangle.
\end{align}
As noted in section \ref{sec:linearResponse}, the integrand in the $k$th term of the above series, $d(J\circ\varphi^k)\cdot a^{K-k} \:q$, increases exponentially in norm with $k$. Thus, rather than a direct evaluation, we apply integration by parts on the unstable manifold, which moves the derivative away from the time-dependent term $J\circ\varphi^k.$

Before we integrate by parts, we apply disintegration 
(Eq. \ref{eqn:disintegration}) of the SRB measure to rewrite the unstable contribution,
\begin{align}
\notag 
    &\langle J, \partial_s\mu_s\rangle^{\rm u} := \lim_{K\to\infty} \sum_{k=0}^{K-1} \langle a^{K-k}\; d(J\circ\varphi^k) \cdot q, \mu \rangle \\
    \label{eqn:unstableContributionExpansion}
    &= \lim_{K\to\infty} \sum_{k=0}^{K-1} \int_{M/\Xi} \int_{B_x} a^{K-k} \circ \Phi^x(\xi) \; \dfrac{d(J\circ\varphi^k\circ \Phi^x)}{d\xi}(\xi) \; \rho^x\circ\Phi^x (\xi) \; d\xi \; d\hat{\mu}(x),
\end{align}
where $B_x := {\Phi^x}^{-1}(\Xi_x).$ When compared to Eq. \ref{eqn:disintegration}, we have additionally used a change of variables, $x'\to \xi,$ in the inner integral. Since $\norm{{\Phi^{x}}'(\xi)} = 1,$ by construction, this change of variables does not introduce a multiplicative factor in the integrand. Now applying integration by parts on the inner integral, the $k$th term on the right hand side of Eq. \ref{eqn:unstableContributionExpansion} becomes,
\begin{align}
\label{eqn:integrationByParts}
\notag
&\int_{M/\Xi} \int_{\partial B_x} \dfrac{d((a^{K-k} \: J\circ\varphi^k\: \rho^x) \circ \Phi^x)}{d\xi}  d\xi \; d\hat{\mu}(x) \\
&- \int_{M/\Xi} \int_{B_x} J\circ\varphi^k\circ\Phi^x 
\Big( \dfrac{d(a^{K-k}\circ \Phi^x)}{d\xi} + \dfrac{a^{K-k}\circ \Phi^x}{\rho \circ\Phi^x} \dfrac{d(\rho^x\circ\Phi^x)}{d\xi}  \Big)
(\rho^x \circ\Phi^x) d\xi \; d\hat{\mu}(x).
\end{align}
The first term in the above equation vanishes due to cancellations on the boundaries of $B_x$ for different $x$ (Theorem 3.1(b) in \cite{ruelle}\cite{ruelle1}; see also \cite{miahao}). Changing variables back to $x'$, we obtain 
the following regularized expression for the unstable contribution, 
\begin{align}
\notag
    \langle J, \partial_s\mu_s\rangle^{\rm u} &= 
- \lim_{K\to\infty} \sum_{k < K} \int_{M/\Xi} \int_{\Xi_x} J\circ\varphi^k(x')\:  
\Big( \dfrac{ d(a^{K-k}\circ \Phi^x)}{d\xi}(({\Phi^x})^{-1} (x')) \\
& + \dfrac{a^{K-k}(x')}{\rho^x(x')} \dfrac{d (\rho^x\circ\Phi^x)}{d\xi}(({\Phi^x})^{-1} (x'))  \Big)\; \rho^x(x') \; d\nu_{x}(x') \; d\hat{\mu}(x),
\label{eqn:unstableContributionRegularized}
\end{align}
We introduce the logarithmic density gradient function \cite{adam}\cite{angxiu-s3}, 
\begin{align}
g^x(x'):= \dfrac{1}{\rho^x(x')}\dfrac{d (\rho^x\circ\Phi^x)}{d\xi}({(\Phi^x)}^{-1} (x')).
    \label{eqn:densityGradient}
\end{align}
As we show in section \ref{sec:formulaforg}, $g^x$ does not depend on $x$, and hence we denote the density gradient
simply as $g$. We also introduce the scalar field sequence $b^k$ defined as $b_x^k := d_\xi (a^k\circ\Phi^x)(0)$ to denote the unstable derivative of $a^k$, deferring the (constructive) proof of its uniform convergence until Lemma \ref{lem:convergenceOfb}. This leads us to the following expression for the unstable contribution,
\begin{align}
\label{eqn:unstableContributionRegularizedExpression}
    \langle J, \partial_s \mu_s \rangle^u = -\lim_{K\to\infty} \sum_{k = 0}^{K-1} \langle J\circ \varphi^k (g\: a^{K-k} + b^{K-k}), \mu \rangle. 
\end{align}

Recall that we compute $a^k$ iteratively along a typical trajectory as part of the stable contribution computation (section \ref{sec:stableContribution}). We are now left with the computation of $g$ and $b^k$ along the trajectory. These 
are tackled in section \ref{sec:formulaforg} and \ref{sec:formulaforb} respectively. 
   Ni \cite{angxiu-s3} addresses the computation of the divergence on the unstable manifold (analogous to the function $g$ described above), where the differentiation is performed on a set of {\em shadowing} coordinates that are tied to the parameter perturbation. This is part of an algorithm to compute linear response that uses a different decomposition from the present paper, into shadowing and unstable directions. However, the shadowing direction, $v^{\rm sh}$ in \cite{angxiu-s3} is related to our regularized tangent vector field $v$, by the relation $\mathcal{P} v^{\rm sh} = v$ (see \cite{angxiu-s3} or Appendix \ref{appx:shadowing}). 
\section{Computing derivatives along unstable manifolds}
\label{sec:unstableContribution}

 In the previous section, we obtained a regularized expression for the unstable contribution. That is, the integrand in the regularized expression, Eq. \ref{eqn:unstableContributionRegularizedExpression}, is uniformly bounded  -- the uniform boundedness of the scalar field sequences $\left\{a^k\right\}$ and $\left\{b^k\right\}$ and the boundedness of $g$ are proved in Lemma \ref{lem:convergenceOfa}, \ref{lem:convergenceOfb} and \ref{lem:convergenceOfg} respectively. The question still remains how we can compute the unknown scalar fields $g$ and $b.$ For both these computations, we require second-order unstable derivatives, which we discuss next. 
 \subsection{Iteratively computing the curvature of the unstable manifold}
\label{sec:formulaforw}
 The function $\Phi^{x'}(\xi)$ is an arclength travelled by a particle on a local unstable manifold of $x'$, starting from $x'$. The particle travels with unit speed, and its instantaneous velocity at time $\xi$ is $q_{\Phi^{x'}(\xi)}.$ Its acceleration field is given by $w^{x'} := d^2_\xi \Phi^{x'}(\xi).$ We derive a recursive equation satisfied by this family, by starting with a differentiation with respect to $\xi$ of the definition of $\alpha$ (Eq. \ref{def:localExpansionFactor}),
\begin{align}
    \label{eqn:derivativeOfalpha}
    \alpha_{\varphi x}^2 \: \gamma_{\varphi x}^{\varphi x'} = (d\varphi_x q_x)^T \big( d\varphi_x w_x^{x'} + d^2\varphi_x(q_x, q_x)\big).
\end{align}
Here $\gamma^{x'}_x := d_\xi (\alpha \circ \Phi^{x'})((\Phi^{x'})^{-1} x)$ is the unstable derivative of $\alpha$; $d^2\varphi_x$ is the bilinear form that returns a tangent vector $\in T_{\varphi x} M;$ it can be written as an $m\times m\times m$ tensor consisting of component-wise partial derivatives of the Jacobian $d\varphi_x.$
In deriving Eq. \ref{eqn:derivativeOfalpha}, we have used the chain rule in Eq. \ref{eqn:chainRuleForPhix}. 
Now differentiating the statement of covariance of the unstable subspace (section \ref{sec:uniformHyperbolicity}), $\alpha_{\varphi x} q_{\varphi x} = d\varphi_x q_x$,
\begin{align}
\label{eqn:derivativeOfalphaqRelationship}
    \alpha_{\varphi x} \gamma^{x'}_{\varphi x} q_{\varphi x} + 
    \alpha_{\varphi x}^2 w_{\varphi x}^{\varphi x'} = d\varphi_x w_x^{x'} + d^2\varphi_x(q_x, q_x).
\end{align}
Substituting Eq. \ref{eqn:derivativeOfalpha} into Eq. \ref{eqn:derivativeOfalphaqRelationship}, and using the definition of $\alpha$, 
\begin{align}
\label{eqn:computationOfw}
    w_{\varphi x}^{\varphi x'} = \dfrac{1}{\alpha_{\varphi x}^2} \left( I - q_{\varphi x} q_{\varphi x}^T\right) \Big( d\varphi_x w_x^{x'} + d^2\varphi_x(q_x, q_x)\Big),
\end{align}
where $I$ is the $m\times m$ identity matrix. The above equation represents the following relationship between elements of the family of vector fields, $\left\{ w^x\right\}$,
\begin{align}
    w^{\varphi x} = \mathcal{P} \dfrac{d\varphi \: w^x + d^2\varphi(q, q) }{\alpha^2}.
\end{align}
In Lemma \ref{lem:existenceAndUniquenessOfw}, we show that any bounded sequence $\left\{w^n\right\}$ of vector fields that satisfies, 
\begin{align}
\label{eqn:secondOrderTangentEquationForw}
    w^{n+1} = \mathcal{P} \dfrac{d\varphi \: w^n + d^2\varphi(q,q)}{\alpha^2}
\end{align}
 converges uniformly to a unique vector field $w.$ Hence, the family $\left\{ w^x \right\}$ does not depend on the parameterization centers, and simply denotes a single vector field, which we call $w.$ Reconsidering Eq. \ref{eqn:derivativeOfalpha},
 \begin{align}
      \gamma_{\varphi x}^{\varphi x'} = \dfrac{q_{\varphi x}^T}{\alpha_{\varphi x}} \big( d\varphi_x w_x + d^2\varphi_x(q_x, q_x)\big),
 \end{align}
 it is clear that the family of scalar fields $\left\{ \gamma^x\right\}$ is also independent of the parameterization. We henceforth write $\gamma = \partial_\xi \alpha,$ to denote the unstable derivative of the scalar field $\alpha.$
 
  Similarly, the scalar fields $d_\xi (a \circ \Phi^x)$ and $d_\xi (\log{\rho^x}\circ\Phi^x)$ can be attained as limits of (exponentially) uniformly converging sequences of scalar fields, as proved in Lemma \ref{lem:convergenceOfb} and Lemma \ref{lem:convergenceOfg} respectively. Thus, by the same argument as we used for $\left\{w^x\right\}$, these limits are independent of the parameterization centers and are denoted simply $b := \partial_\xi a$ and $g := \partial_\xi \log{\rho^x}$ respectively. 
  
 The curvature of a local unstable manifold is the magnitude of the acceleration experienced by a particle travelling at unit speed. That is, the curvature of the unstable manifold at $x$ is $\|w_x\|.$ From Eq. \ref{eqn:derivativeOfalpha}, it is clear that the computation of the curvature and that of $\gamma$ go hand in hand. To compute both $w$ and then using Eq. \ref{eqn:derivativeOfalpha}, $\gamma$, along a typical trajectory, we solve the second-order tangent equation derived above (Eq. \ref{eqn:secondOrderTangentEquationForw}) iteratively along the trajectory.
 In practice, we assume $w_0^0 := w_{x_0}^0 = 0$ at some $\mu$-typical $x_0$ and iterate Eq. \ref{eqn:secondOrderTangentEquationForw}. Such a computation converges exponentially with $n$ to the true value of $w_{n}$ due to Lemma \ref{lem:existenceAndUniquenessOfw}. At each step of the recursion, the value of $\gamma_{n}$, which is computed through Eq. \ref{eqn:derivativeOfalpha} using the computed values of $w_{n}$, also becomes exponentially more accurate. These values of $\gamma_n$ along the orbit $\left\{x_n\right\}$ are used to obtain $g$ along the orbit, as we shall discuss in the next subsection.
 
 From Eq. \ref{eqn:secondOrderTangentEquationForw}, we can also infer that the vector field $w$ is orthogonal to $q$ (i.e., the acceleration of a particle on a local unstable manifold is perpendicular to its velocity). Visualizations of the unstable manifold curvatures obtained from the norms of $w_n$ computed as above are shown for classical hyperbolic attractors in \cite{nisha_clv}. 

\subsection{Iterative formula for unstable derivatives of SRB density}
\label{sec:formulaforg}

In a previous work \cite{adam}, we provide an intuitive explanation of the density gradient $g$ as well as its computation in one-dimensional expanding maps of the interval, where the global unstable manifold is the background manifold $M$. In the present setting of a one-dimensional unstable manifold, we see that our derivation leads to a similar computation of $g$ as in 1D expanding maps. From our expression for the SRB 
density (Eq. \ref{defn:density}), we find that for $x \in \Xi_{x'},$ 
\begin{align}
    \label{eqn:densityRecursion}
    \rho^{x'}_{x} = \rho^{\varphi^{-1} x'}_{\varphi^{-1} x} \dfrac{\alpha_{x'}}{\alpha_{x}}
\dfrac{\bar{\rho}^{\varphi^{-1} x'}}{\bar{\rho}^{x'}}.
\end{align}
We recall that phase points appearing on superscripts indicate the centers of our coordinate system $\Phi^x$ (section \ref{sec:coordinateSystem}), while on subscripts, they indicate evaluations of the function (e.g., in Eq. \ref{eqn:densityRecursion}, the scalar functions $\rho^{x'}$ and $\rho^{\varphi^{-1} x'}$ are evaluated at $x$ and $\varphi^{-1}x$ respectively). Taking logarithm and differentiating Eq. \ref{eqn:densityRecursion} with respect to $\xi$ on the unstable manifold at $x$, using the definition (Eq. \ref{eqn:densityGradient}) of the 
density gradient $g^x$,
\begin{align}
   \notag 
   g^{x'}_{x} &= \dfrac{g^{\varphi^{-1} x'}_{\varphi^{-1} x}}{\alpha_{x}}  - 
   \dfrac{1}{\alpha_{x}}\dfrac{d(\alpha\circ\Phi^{x'})}{d\xi}((\Phi^{x'})^{-1}(x)) \\
 \label{eqn:densityRecursionWithUnknown}
&= \dfrac{g^{\varphi^{-1} x'}_{\varphi^{-1} x}}{\alpha_{x}}  - 
   \dfrac{\gamma_x}{\alpha_{x}}.
\end{align}
To derive the first term on the right hand side, we have also used the chain rule for our parameterization (Eq. \ref{eqn:chainRuleForPhix}) and the scalar field $\gamma$ introduced in the previous subsection. Now, in Lemma \ref{lem:convergenceOfg}, we show that starting from any bounded scalar function $h^0$, and considering any bounded function $r$, the following recursion converges uniformly,
\begin{align}
\label{eqn:recursionh}
    h^{n+1}_{\varphi x} = \dfrac{h^n_x}{\alpha_{\varphi x}} + r_{\varphi x}.
\end{align}
Since the sequence of functions $\left\{g^{\varphi^n x}\right\}_{n=-\infty}^0$ in Eq. \ref{eqn:densityRecursionWithUnknown} satisfies this same recursion, the family of functions $\left\{g^x\right\}$ must indeed be a single function independent of $x.$ Hence, we omit the superscript $x,$ and simply denote the density gradient $g.$ Thus, Eq. \ref{eqn:densityRecursionWithUnknown} can be rewritten as follows, fixing some $\mu$-typical reference orbit $\left\{x_n\right\}$:
\begin{align}
\label{eqn:densityGradientComputation}
    g_{n+1} = \dfrac{g_n}{\alpha_{n+1}} - \dfrac{\gamma_{n+1}}{\alpha_{n+1}}.
 \end{align}
This iterative equation exponentially converges (Lemma \ref{lem:convergenceOfg}), with $n$, to the true value of $g_n := g(x_n),$ along almost every orbit. In practice, we start the computation with an arbitrary initialization. 

\subsection{Iterative computation of the scalar field $b$}
\label{sec:formulaforb}
Having completely prescribed a computation for $g,$ the only unknown we are left with in computing the unstable contribution (Eq. \ref{eqn:unstableContributionRegularizedExpression}) is the scalar field $b.$
We describe a procedure, involving $w$, and a new second-order tangent equation. This latter second-order tangent solution exponentially approaches the vector field representing the unstable derivative of $v.$ In order to derive this equation, recall that the vector field $v$ is the limit of a sequence $\left\{v^n\right\}$ described in Eq. \ref{eqn:regularizedTangentEquation}. The sequence $\left\{v^n\right\}$ is differentiable in the unstable direction, if we start with a differentiable vector field $v^0.$ In Lemma \ref{lem:existenceAndUniquenessOfy}, we show that the sequence of these derivatives, denoted $\left\{y^n\right\},$ converges uniformly. Since $\left\{v^n\right\}$ also converges uniformly,
the limit $y$ of the sequence $\left\{y^n\right\}$ is the unstable derivative of $v.$ 

Thus, it is valid to differentiate the regularized tangent equation (Eq. \ref{eqn:stableTangentEquation}) in the unstable direction. Taking this derivative, we obtain that the vector field $y$ satisfies the following equation, along a fixed $\mu-$typical orbit $\left\{x_n\right\}_{n\in\mathbb{Z}^+}$ 
\begin{align}
\notag 
    \alpha_{n+1} y_{n+1} &= (d\varphi)_n y_n + (d^2\varphi)_n(q_n, v_n) \\
    \label{eqn:secondOrderStableTangentEquation}
    &+  \alpha_{n+1} (d\chi)_{n+1}\: q_{n+1} - \alpha_{n+1} b_{n+1} q_{n+1} -\alpha_{n+1} a_{n+1} w_{n+1}.
\end{align}
As usual, we have used the chain rule for differentiating on unstable manifolds (Eq. \ref{eqn:chainRuleForPhix}) and the differentiability of $d\varphi$ and $\chi$ on $M$. We have suppressed the parameterization centers on the superscripts because the unstable derivatives $y$ and $b$ are limits of uniformly converging series, and hence we can invoke the same argument as in sections \ref{sec:formulaforw} and \ref{sec:formulaforg} to show their independence from the parameterization. Using Lemma \ref{lem:existenceAndUniquenessOfy}, we can argue that the iteration above converges to the true value of the vector $y_n$ as $n\to\infty,$ starting with an arbitrary value for $y_0.$ In order to compute the above recursion, we must know the value of $b$ along the trajectory. However, it is possible to close this system of equations for $y_{n+1}$ and $b_{n+1}$ by differentiating in the unstable direction, the definition of $a_{n+1}$. 

Recall that the regularized tangent solutions $v_n$ are orthogonal to $q_n$
and $a_n$ are chosen so as to enforce this orthogonality. From Eq. \ref{eqn:stableTangentEquation}, we get,
\begin{align} 
    a_{n+1} = q_{n+1} \cdot \left( (d\varphi)_n v_n + \chi_{n+1} \right).
\end{align}
Differentiating the above equation in the unstable direction,
\begin{align}
\notag 
    \alpha_{n+1} b_{n+1} &= \left((d^2\varphi)_n(v_n, q_n) + (d\varphi)_n y_n  + \alpha_{n+1} (d\chi_{n+1}q_{n+1})\right)\cdot q_{n+1} \\
    \label{eqn:dbn}
    &+ \alpha_{n+1} w_{n+1} \cdot \left((d\varphi)_n v_n + \chi_{n+1} \right).
\end{align}
By comparing Eq. \ref{eqn:dbn} with the inner product of Eq. \ref{eqn:secondOrderStableTangentEquation} with $q_{n+1}$, we see that
\begin{align}
\label{eqn:constraint}
y_{n+1} \cdot q_{n+1} = - w_{n+1}\cdot ((d\varphi)_n v_n + \chi_{n+1}).      
\end{align}
 By using the orthogonality $w_{n+1}\cdot q_{n+1} = 0$, which follows from Eq. \ref{eqn:computationOfw}, on Eq. \ref{eqn:stableTangentEquation}, we obtain $w_{n+1} \cdot v_{n+1} = w_{n+1} \cdot ((d\varphi)_n v_n + \chi_{n+1})$. Thus, we can further simplify Eq. \ref{eqn:constraint} to obtain the constraint,
\begin{align}
\label{eqn:constraintSimple}
    y_{n+1} \cdot q_{n+1} = - w_{n+1}\cdot v_{n+1}.
\end{align}
Thus, we have two equations (Eq. \ref{eqn:secondOrderStableTangentEquation} and Eq. \ref{eqn:constraintSimple}) from which the two unknowns $b_{n+1}$ and $y_{n+1}$ are solved for. Without loss of generality, we start the iteration in Eq. \ref{eqn:secondOrderStableTangentEquation} with $y_0 = 0 \in \mathbb{R}^m.$ Then, by Lemma \ref{lem:existenceAndUniquenessOfy} and Lemma \ref{lem:convergenceOfb}, this iterative procedure yields exponentially accurate values of both $y_n$ and $b_n$ along a trajectory.  

\subsection{Computation of the unstable contribution}
\label{sec:unstableContributionComputation}
We now assemble the components computed in the previous subsections together to form the unstable contribution, which we recall from Eq. \ref{eqn:unstableContributionRegularizedExpression},
\begin{align}
\label{eqn:unstableContributionRecalled}
    \langle J, \partial_s\mu_s\rangle^{\rm u} = - \lim_{K\to\infty}\sum_{k=0}^{K-1} \langle J\circ\varphi^k \left(a^{K-k} g + b^{K-k} \right), \mu\rangle.
\end{align}
Recall that, by increasing the run-up time $K',$ the scalar fields $a^k$ and $b^k$, $k \geq 0,$ become arbitrarily close in the supremum norm, to $a$ and $b$ respectively. In addition, we shall assume a strong form of exponential correlation decay (section \ref{sec:srb}) by which, for a H\"older continuous field $l$ with $\langle l, \mu \rangle = 0$, there exists a $c_l > 0$ such that $|\langle (J\circ\varphi^n) \: l , \mu\rangle| \leq c_l \lambda^n \|l\|,$ for all $n.$ Under this assumption, with $l = (a^{K-k} - a) g + (b^{K-k} - b),$ 
\begin{align}
   \left|\sum_{k=0}^{K-1} \left(\langle J\circ\varphi^k \left(a^{K-k} g + b^{K-k} \right), \mu\rangle -  \langle J\circ\varphi^k \left(a g + b \right), \mu\rangle\right)\right| \leq \|g\|c_3\: c_1\: K\:\lambda^K + c_4\:c_2\:K\: \lambda^K, 
\end{align}
because there exist constants $c_1, c_2 > 0$ such that $\|a^{K-k} - a\|\leq c_1 \lambda^{K-k}$ and $\|b^{K-k} - b\|\leq c_2 \lambda^{K-k}$ according to Lemmas \ref{lem:convergenceOfa} and \ref{lem:convergenceOfb} respectively. Letting $K \to \infty,$ we conclude that the unstable contribution may be computed as 
\begin{align}
\label{eqn:approximateUnstableContribution}
     \langle J, \partial_s\mu_s\rangle^{\rm u} = - \sum_{k=0}^{\infty} \langle J\circ\varphi^k \left(a g + b \right), \mu\rangle.
\end{align}
In practice, we need only a small number $K$ of terms in the series above to approximate the unstable contribution to within a given tolerance. We fix some $N$-length $\mu-$typical orbit $\left\{x_n\right\}$ along which we compute the first $K$ terms above,
\begin{align}
\label{eqn:unstableContributionErgodicAverageRecalled}
     \langle J, \partial_s\mu_s\rangle^{\rm u} \approx - \sum_{k=0}^{K-1} \dfrac{1}{N} \sum_{n=0}^{N-1} J_{n+k} \left(a_n g_n + b_n \right).
\end{align}
This computation, due to decay of correlations, converges as $N\to \infty$ and $K\to \infty$, in that order. Further, the error in approximating each term of Eq. \ref{eqn:unstableContributionRecalled} as an $N$-time ergodic average declines as ${\cal O}(1/\sqrt{N}),$ up to a factor iterated logarithmic in $N.$ A complete discussion of the error convergence of the unstable contribution computation is deferred until section \ref{sec:unstableContributionProof}, after the proofs of convergence of the individual computational components of Eq. \ref{eqn:unstableContributionErgodicAverageRecalled}. Here we remark that even if the equivalence between Eq. \ref{eqn:approximateUnstableContribution} and the unstable contribution (Eq. \ref{eqn:unstableContributionRecalled}) does not strictly hold, the error in approximating the unstable contribution (Eq. \ref{eqn:unstableContributionRecalled}) by Eq. \ref{eqn:unstableContributionErgodicAverageRecalled} becomes negligible for a sufficiently long run-up time $K'$ (section \ref{sec:algorithm}). This is because, for large $K'$, $a^k, 0\leq k \leq K-1$ are all approximately equal to $a$ and $b^k, 0\leq k \leq K-1$ are all approximately equal to $b,$ due to Lemmas \ref{lem:convergenceOfa} and \ref{lem:convergenceOfb} respectively. 

Now we summarize these individual components that lead to the computation of the unstable contribution by Eq. \ref{eqn:unstableContributionErgodicAverageRecalled}. Let $\left\{x_n\right\}_{n=-K'}^{N-1}$ be a sufficiently long finite-length $\mu$-typical orbit. In order to compute Eq. \ref{eqn:unstableContributionErgodicAverageRecalled}, we need the values of $a$, $b$ and $g$ along the orbit. First, we note that the (approximate) values of $a$ along the orbit are known from the regularized tangent solution (Eq. \ref{eqn:stableTangentEquation}),
\begin{align}
    a_{n+1} = q_{n+1}^T ( (d\varphi)_n \: v_n + \chi_{n+1}).
\end{align}
In section \ref{sec:formulaforg}, we derived a recursive equation for the density gradient function, 
\begin{align}
\label{eqn:gcomputation}
    g_{n+1} = \dfrac{g_n}{\alpha_{n+1}} - \dfrac{\gamma_{n+1}}{\alpha_{n+1}},
\end{align}
which we begin by arbitrarily setting $g_{-K'}= 0.$ The scalar field $\gamma,$ which denotes the unstable derivative of the expansion factor $\alpha,$ is evaluated along an orbit by solving for the vector field $w$ (section \ref{sec:formulaforw}). Summarizing this step here, the recursive formula 
\begin{align}
\label{eqn:wcomputation}
    w_{n+1} = \dfrac{1}{\alpha_{n+1}^2}\left( I - q_{n+1} q_{n+1}^T\right) \left( (d\varphi)_n w_n + (d^2\varphi)_n(q_n, q_n)\right) 
\end{align}
is again started with the arbitrary choice $w_{-K'} \in \mathbb{R}^m$; at each step of the recursion, the values of $\gamma$ along an orbit are set to
\begin{align}
\label{eqn:gammacomputation}
    \gamma_{n+1} = \dfrac{q_{n+1}^T}{\alpha_{n+1}} \left( (d\varphi)_n w_n + (d^2\varphi)_n(q_n, q_n)\right).
\end{align}
Finally, the values $b_n$ are obtained simultaneously with $y_n$ from Eq. \ref{eqn:secondOrderStableTangentEquation} and Eq. \ref{eqn:constraintSimple}, which are repeated here for clarity:
\begin{align}
\notag 
    y_{n+1} &= \dfrac{(d\varphi)_n y_n + (d^2\varphi)_n(q_n, v_n)}{ \alpha_{n+1}} \\
    \label{eqn:secondOrderStableTangentEquationRederived}
    &+  (d\chi)_{n+1}\: q_{n+1} - b_{n+1} q_{n+1} - a_{n+1} w_{n+1},
\end{align}
where $b_{n+1}$ is such that $y_{n+1}\cdot q_{n+1} = - v_{n+1}\cdot w_{n+1},$ at each $n.$ Using the values of $a_n,$ $b_n$ and $g_n$ obtained approximately, as outlined above, we compute an $N$-sample average of the $k$-lag correlation with $J$, in order to compute the $k$th term of the unstable contribution (Eq. \ref{eqn:unstableContributionErgodicAverageRecalled}). Putting this together with the stable contribution (Eq. \ref{eqn:stableContributionErgodicAverage}), we obtain the overall sensitivity. These steps are condensed into the S3 algorithm listed in section \ref{sec:algorithm}, with one simplification: we can eliminate the need to compute Eq. \ref{eqn:gcomputation} explicitly. To see this, let us consider Eq. \ref{eqn:wcomputation} without the projection,
\begin{align}
\label{eqn:pcomputation}
    p_{n+1} &= \dfrac{1}{\alpha_{n+1}^2}\left( (d\varphi)_n p_n + (d^2\varphi)_n(q_n, q_n)\right) \\
    \notag 
    p_{-K'} &= 0 \in \mathbb{R}^m.
\end{align}
Using these solutions $p_n$, Eq. \ref{eqn:gammacomputation} and Eq. \ref{eqn:gcomputation}, we find that $$a_{n+1} w_{n+1} = a_{n+1} g_{n+1}\: q_{n+1} + a_{n+1} p_{n+1}.$$ Substituting this relationship into Eq. \ref{eqn:secondOrderStableTangentEquationRederived} leads to 
\begin{align}
    \notag 
    y_{n+1} &= \dfrac{(d\varphi)_n y_n + (d^2\varphi)_n(q_n, v_n)}{ \alpha_{n+1}} +  (d\chi)_{n+1}\: q_{n+1}\\
    \label{eqn:secondOrderStableTangentEquationFinal}
    &- c_{n+1} q_{n+1} - a_{n+1} p_{n+1},
\end{align}
where $c_{n+1} := (a_{n+1} g_{n+1} + b_{n+1}).$ Notice that we only need the scalar field $c$, and not $b$ and $g$ explicitly, to compute the unstable contribution and since by iterating Eq. \ref{eqn:secondOrderStableTangentEquationFinal} we obtain $c_n$ directly, we avoid computing $g$ via Eq. \ref{eqn:gcomputation}. Since the above equation replaces the use of $w_n$ with $p_n,$ we do not need to solve for $w_n$ through Eq. \ref{eqn:wcomputation} either; rather, we just compute $p_n$ by iterating Eq. \ref{eqn:pcomputation}. Although we used $w_n$ in the constraint (Eq. \ref{eqn:constraint}) needed to solve for $b_n$, and now $c_n$, the constraint may be rewritten using $p_n$ as, 
\begin{align}
    \label{eqn:constraintpn}
    y_{n+1} \cdot q_{n+1} = - v_{n+1} \cdot w_{n+1} = - v_{n+1} \cdot p_{n+1}.
\end{align}
The second equality is true because $p_{n+1}$ is of the form $w_{n+1} + \delta_{n+1} q_{n+1},$ for some scalar sequence $\left\{\delta_{n+1}\right\},$ and $v_{n+1} \cdot q_{n+1} = 0$ by construction (Eq. \ref{eqn:stableTangentEquation}).
\section{Numerical results}
\label{sec:numerics}
\subsection{Perturbations of the Baker's map}
\label{sec:baker}
In order to validate the S3 algorithm, we consider perturbations of the standard Baker's map that are designed to elicit both stable and unstable contributions. Consider the following self-map $\varphi_s$ of the torus $\mathbb{T}^2$, where $s = [s_1, s_2, s_3, s_4]^T \in \mathbb{R}^4,$ 
\begin{align}
\label{eqn:perturbedBakerMap}
    \varphi_s([x^{(1)}, x^{(2)}]^T) = 
    \begin{bmatrix}
    2 x^{(1)} +  (s_1 + s_2 \sin(2x^{(2)})/2) \sin x^{(1)} - \lfloor x^{(1)}/\pi\rfloor 2\pi \\
    \dfrac{x^{(2)} + (s_4 + s_3 \sin x^{(1)}) \sin(2x^{(2)}) + \lfloor x^{(1)}/\pi\rfloor \pi}{2}
    \end{bmatrix}\; {\rm mod }\; 2\pi. 
\end{align}
The standard Baker's map is recovered at $s = 0 \in \mathbb{R}^4.$ We isolate the effect of each parameter by illustrating the action of perturbed maps with all except one parameter set to 0. Figure \ref{fig:bakersmaps1s2} elucidates the effect of $s_1$ and $s_2.$ The unperturbed Baker's map, as shown in Figure \ref{fig:bakersmaps1s2} (top right), uniformly expands in the horizontal direction ($\hat{x}^{(1)}$) and contracts in the vertical direction ($\hat{x}^{(2)}$). By contrast, when the parameter $s_1$ is non-zero (and the other parameters are set to 0), the expansion in the $\hat{x}^{(1)}$ direction depends on $x^{(1)},$ resulting in a non-uniformly expanded grid as shown on the bottom left of Figure \ref{fig:bakersmaps1s2}. On the bottom right of Figure \ref{fig:bakersmaps1s2}, we show the effect of the parameter $s_2,$ from which it is clear that the expansion in the $\hat{x}^{(1)}$ direction depends nonlinearly on the $x^{(2)}$ coordinate.

In Figure \ref{fig:bakersmaps3s4} (top right), the expansion and contraction by constant factors, in the $\hat{x}^{(1)}$ and $\hat{x}^{(2)}$ directions respectively, by the unperturbed map, are clearly seen from the uniform stretching and contraction of the horizontal strips on the top left. On the bottom row, the effect of the parameters $s_3$ and $s_4$, acting in isolation, are shown on the left and right respectively. From Eq. \ref{eqn:perturbedBakerMap}, it is clear that $s_3$ introduces a nonlinear variation with $x^{(1)}$ in the contraction along the $\hat{x}^{(2)}$ direction. This is indeed reflected in the image of the horizontal strips (top left of Figure \ref{fig:bakersmaps3s4}) under the perturbed map, which is shown on the bottom left of Figure \ref{fig:bakersmaps3s4}. Finally, the bottom right plot shows that the contraction in $\hat{x}^{(2)}$ is nonuniform with respect to the ${x}^{(2)}$ coordinate but uniform along the $x^{(1)}$ coordinate, as we would expect from Eq. \ref{eqn:perturbedBakerMap} with $s_4$ being the only non-zero parameter.
\begin{figure}
\centering
   \includegraphics[width=0.3\textwidth]{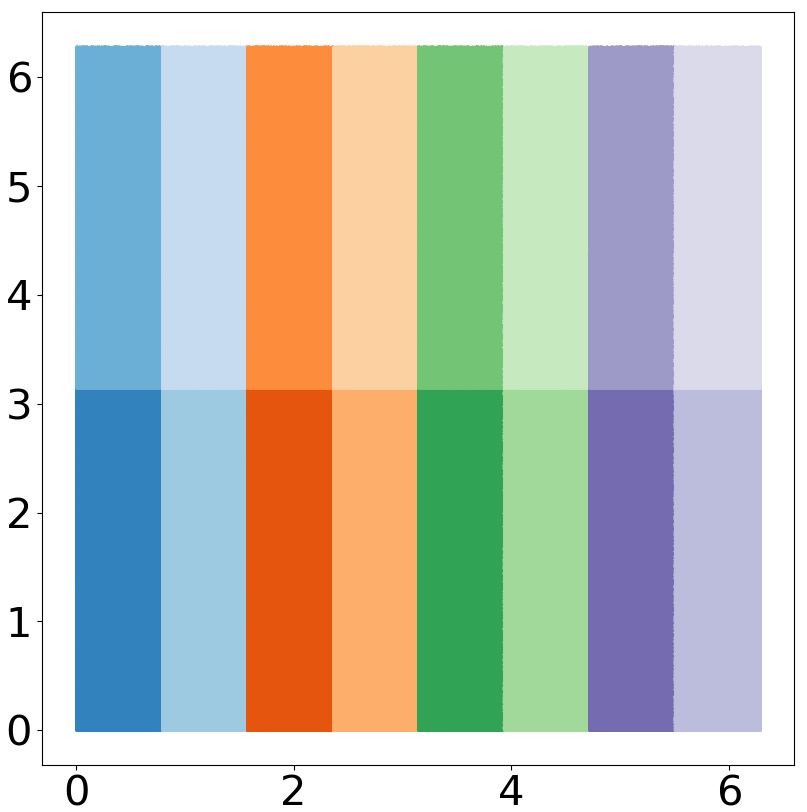}
  \includegraphics[width=0.3\textwidth]{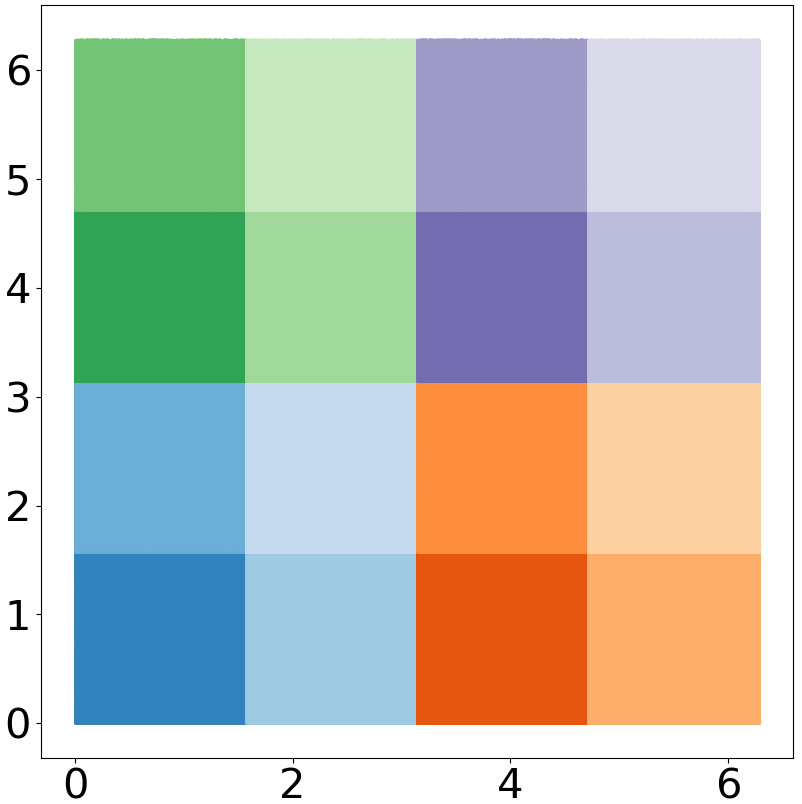}\\
  \includegraphics[width=0.3\textwidth]{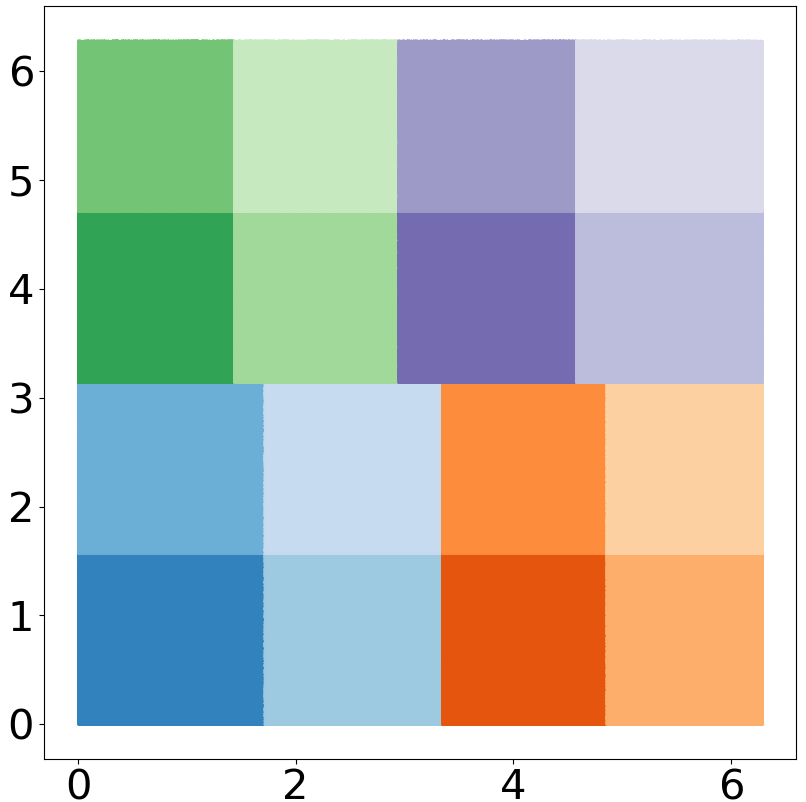}
  \includegraphics[width=0.3\textwidth]{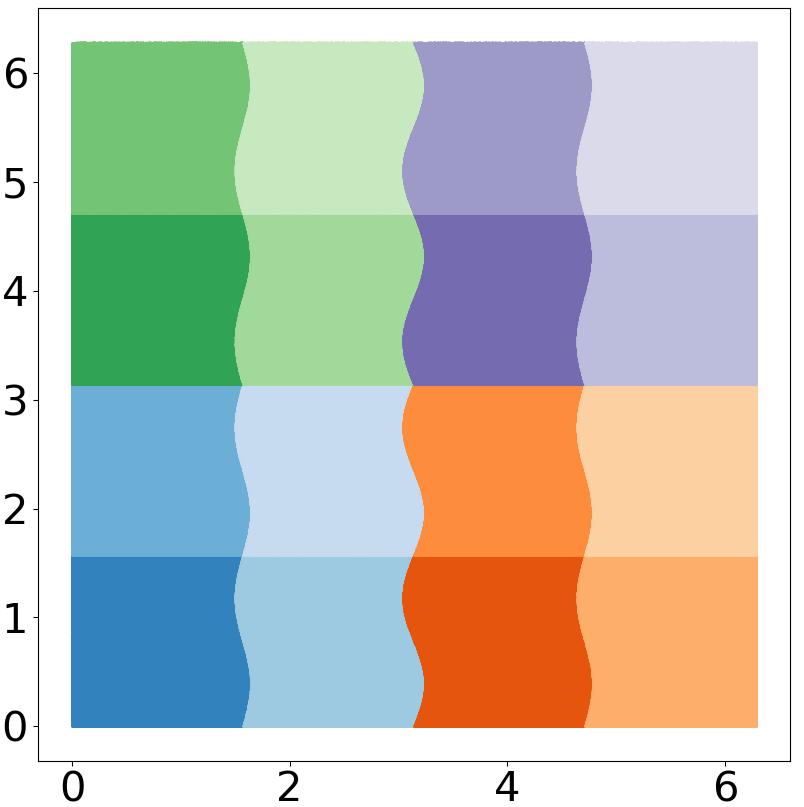}
    \caption{Top left: the domain $\mathbb{T}^2$ covered by rectangles. The other figures show the application of Baker's map at $s = 0 \in \mathbb{R}^4$ (top right), $s = [0.2,0,0,0]^T$ (bottom left) and $s = [0,0.2,0,0]$ (bottom right) on the gridded top left figure.}
    \label{fig:bakersmaps1s2}
\end{figure}
\begin{figure}
\centering
   \includegraphics[width=0.3\textwidth]{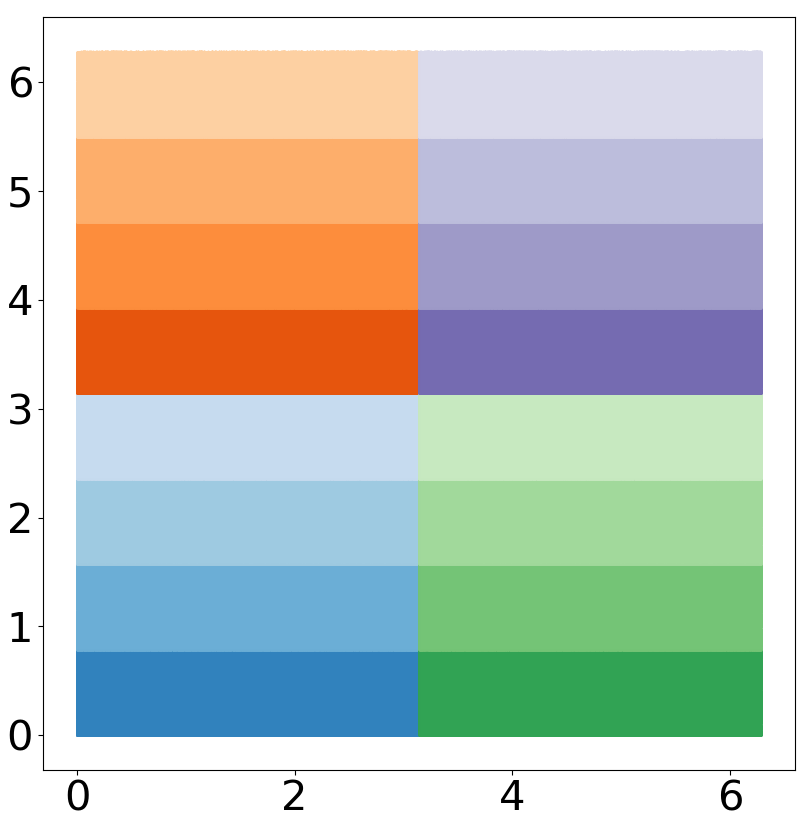}
  \includegraphics[width=0.3\textwidth]{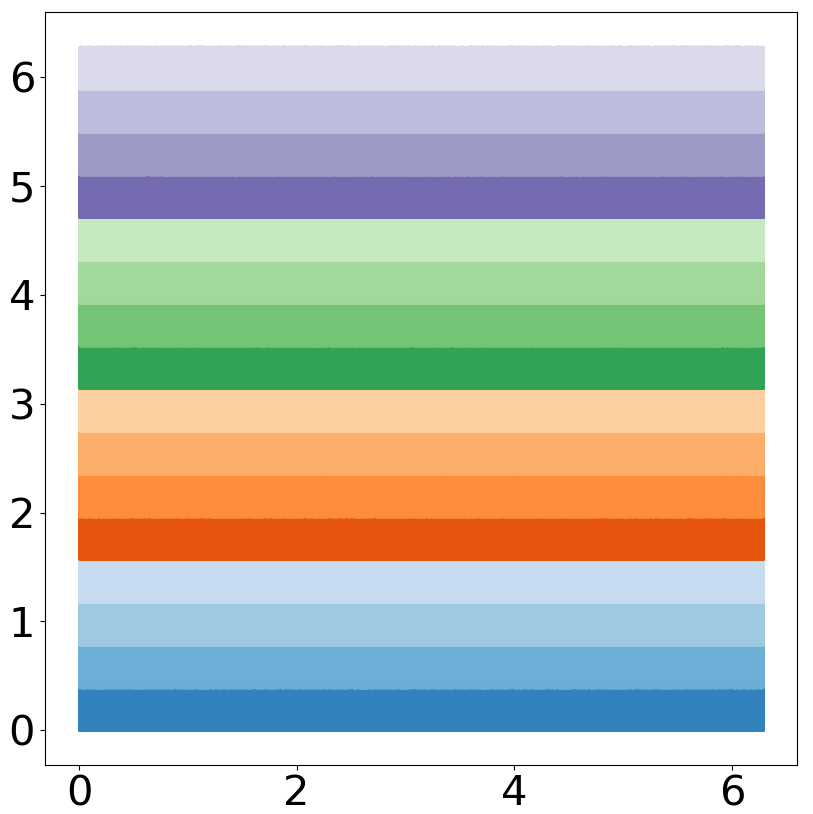} \\
  \includegraphics[width=0.3\textwidth]{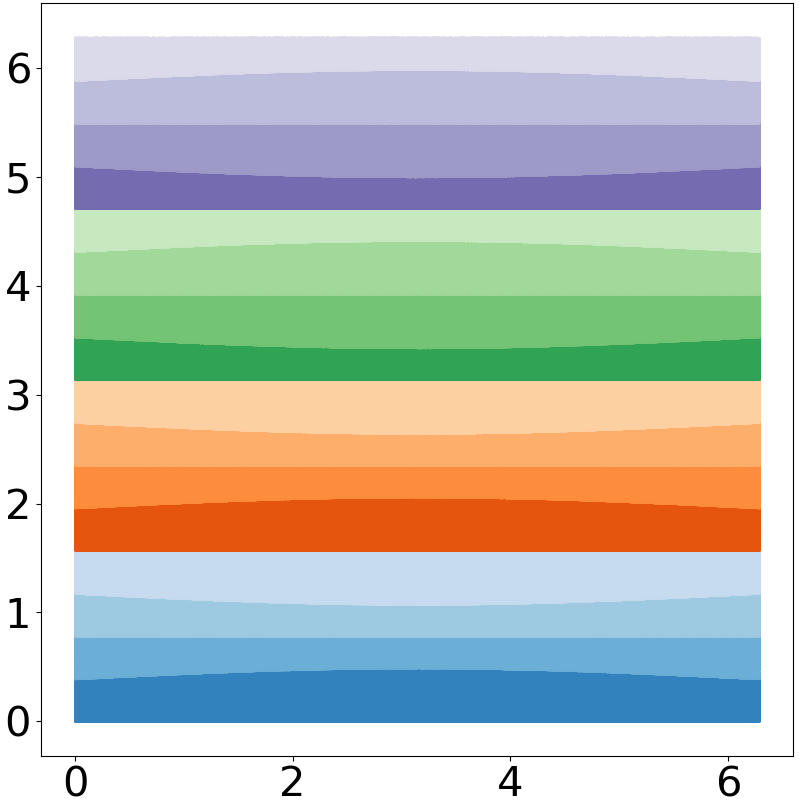}
  \includegraphics[width=0.3\textwidth]{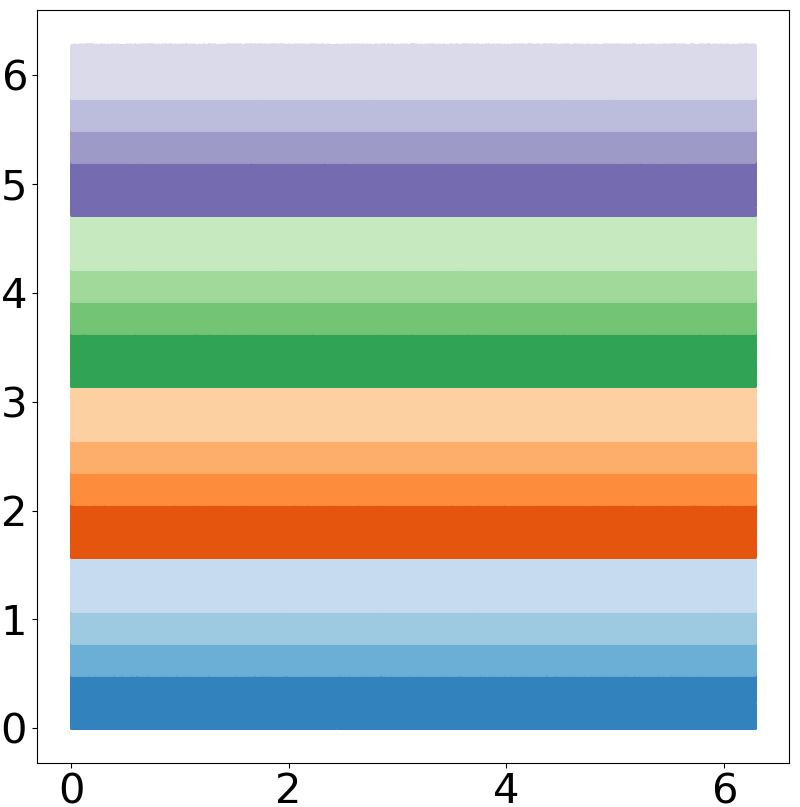}
    \caption{Top left: the domain $\mathbb{T}^2$ covered by rectangles. The other figures show the application of Baker's map at $s = 0 \in \mathbb{R}^4$ (top right), $s = [0,0,0.2,0]^T$ (bottom left) and $s = [0,0,0,0.2]$ (bottom right) on the gridded top left figure.}
    \label{fig:bakersmaps3s4}
\end{figure}
\subsubsection{Stable and unstable subspaces}
\label{sec:stableUnstableSubspacesOfBakersMap}
The standard Baker's map has uniform stable and unstable subspaces, which are aligned with $\hat{x}^{(2)}$ and $\hat{x}^{(1)}$ respectively, i.e., $E^u_x \equiv \hat{x}^{(1)} = {\rm span}\left\{[1,0]^T\right\}$ and $E^s_x \equiv \hat{x}^{(2)} \equiv {\rm span}\left\{[0,1]^T\right\}$ at all $x \in M$. The perturbations of $s_1$ and $s_4$ do not alter these uniform stable and unstable directions. It can be verified that the perturbation of $s_2$ alters the stable direction while retaining $\hat{x}^{(1)}$ as the unstable direction everywhere. The perturbed map with $s_3$ being the only non-zero parameter has a non-uniform unstable subspace that is not parallel to $\hat{x}^{(1)}$ everywhere. On the other hand, its stable subspaces are everywhere parallel to $\hat{x}^{(2)}.$    
\subsubsection{SRB measures of perturbed Baker's maps}
The perturbations of the Baker's map and the original map are all uniformly hyperbolic. Numerical approximations of the SRB measures of the perturbed maps associated to each parameter acting in isolation are shown in Figure \ref{fig:srbBaker}. Specifically, we plot, on a 400$\times$729 grid of $\mathbb{T}^2$, the empirical distributions computed using a long orbit (of length 1.6 trillion), say $\left\{x_n\right\}_{-K\leq n\leq N},$ where $x_0$ can be assumed to sample the SRB measure, for a large enough $K$. At each grid cell $A \subset (0,2\pi)^2$ we calculate the empirical probability, $\mu_{\rm emp}^N(A)=(1/N)\sum_{n=0}^{N-1} 1_A(x_n),$ where $1_A$, defined as $1_A(x) = 1$ when $x \in A$ and 0 otherwise, is the indicator function on the set $A.$ Since the SRB measure is physical, $\mu_{\rm emp}^N(A) \to \mu(A)$ as $N \to \infty.$ The colorbar in Figure \ref{fig:srbBaker} indicates the value $\mu_{\rm emp}^N(A)/{\rm mean}(\mu_{\rm emp}^N),$ where ${\rm mean}(\mu_{\rm emp}^N)$ is the sample mean over all grid cells. We verify that upon increasing the number of grid cells, $\mu_{\rm emp}^N(A)/({\rm area}(A))$ also increases. This numerical observation supports the lack of absolute continuity of $\mu$ with respect to Lebesgue. In particular, we observe that the empirical distributions are supported on Cantor-like sets in the 
$\hat{x}^{(2)}$ direction, which is the stable direction in all cases except the $s_2$ perturbation. That is, when the number of grid cells along $\hat{x}^{(2)}$ is $3^k$ we observe that about $2^k$ of them are not visited by the orbit.

To summarize, while the SRB measure of the unperturbed Baker's map ($s = 0 \in \mathbb{R}^4$) is the Lebesgue measure on $\mathbb{T}^2,$ the perturbed maps have SRB measures that are not absolutely continuous with respect to Lebesgue on $\mathbb{T}^2.$ This is visually observed in Figure \ref{fig:srbBaker}, where, in every case, the distribution appears to have fractal support in the $\hat{x}^{(2)}$ direction. 
\begin{figure}
\centering
    \includegraphics[width=0.45\textwidth]{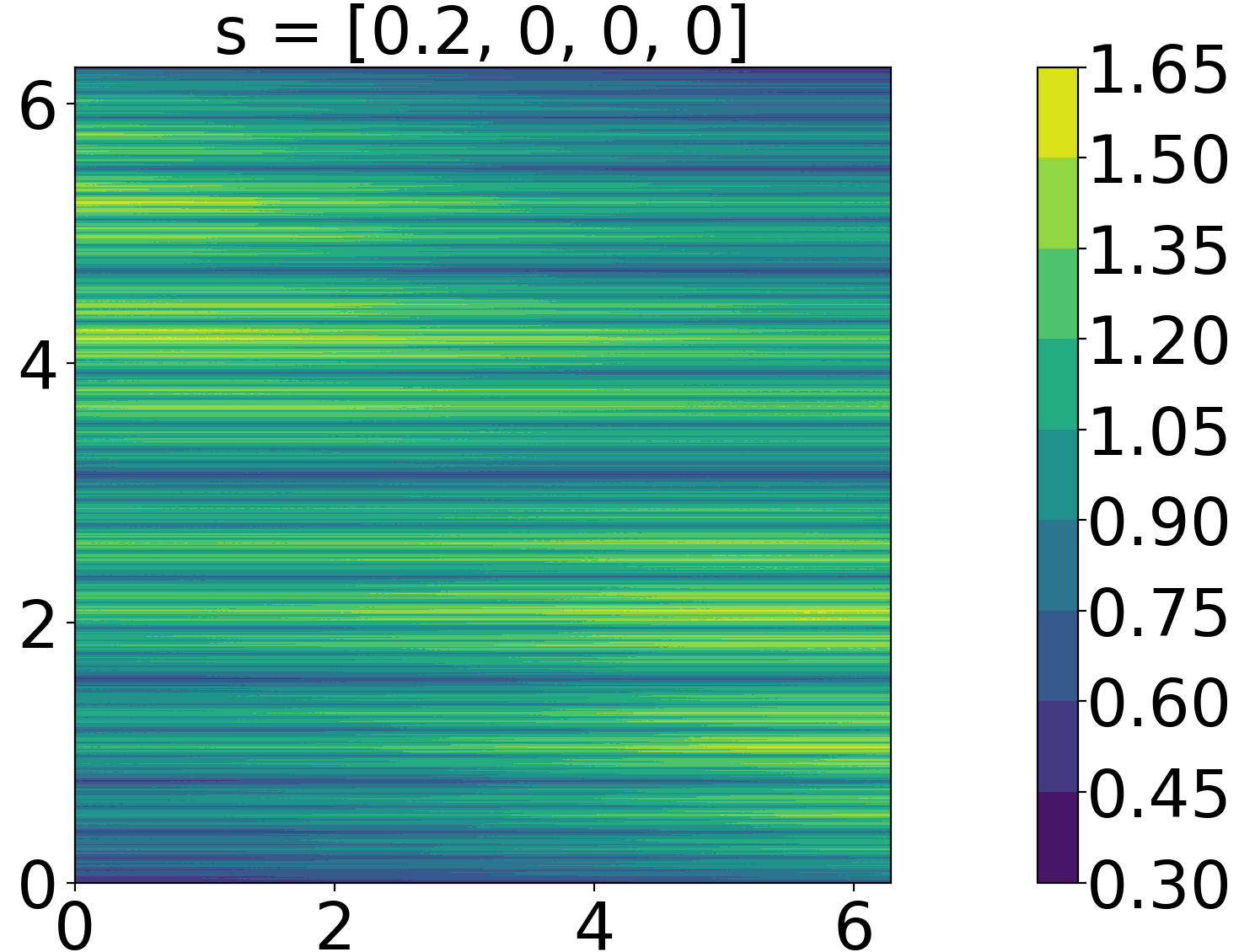}
    \includegraphics[width=0.45\textwidth]{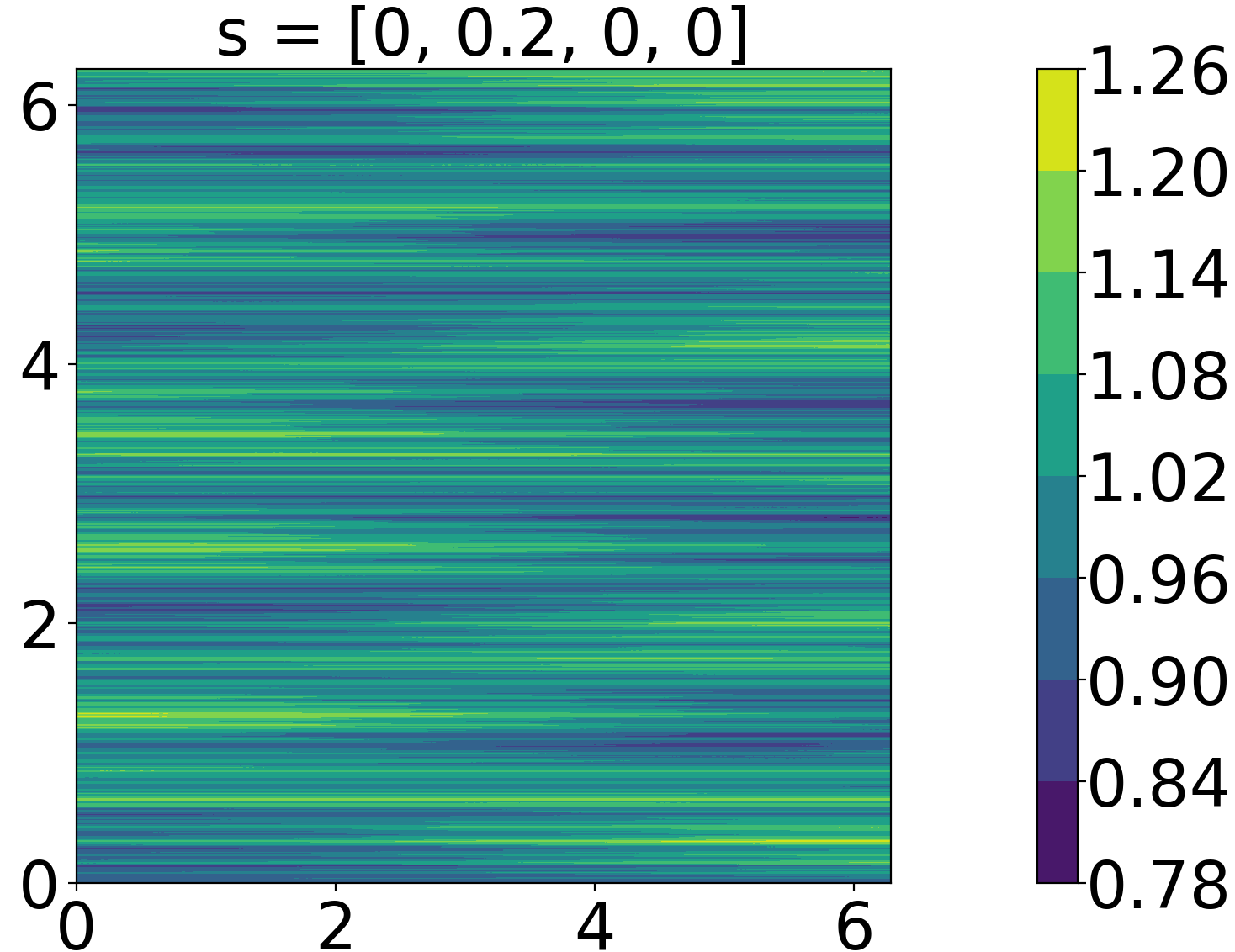}\\
     \includegraphics[width=0.45\textwidth]{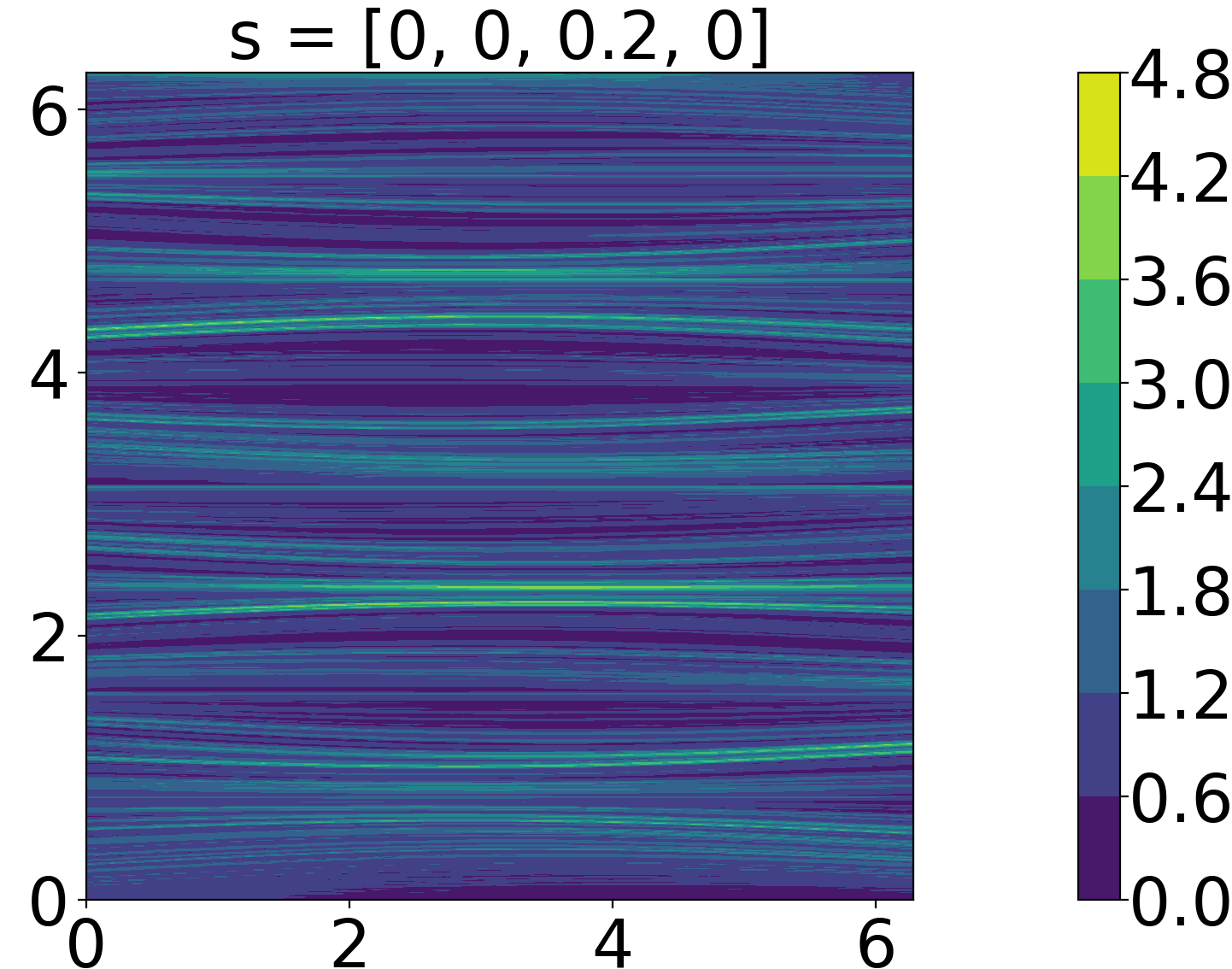}
   \includegraphics[width=0.45\textwidth]{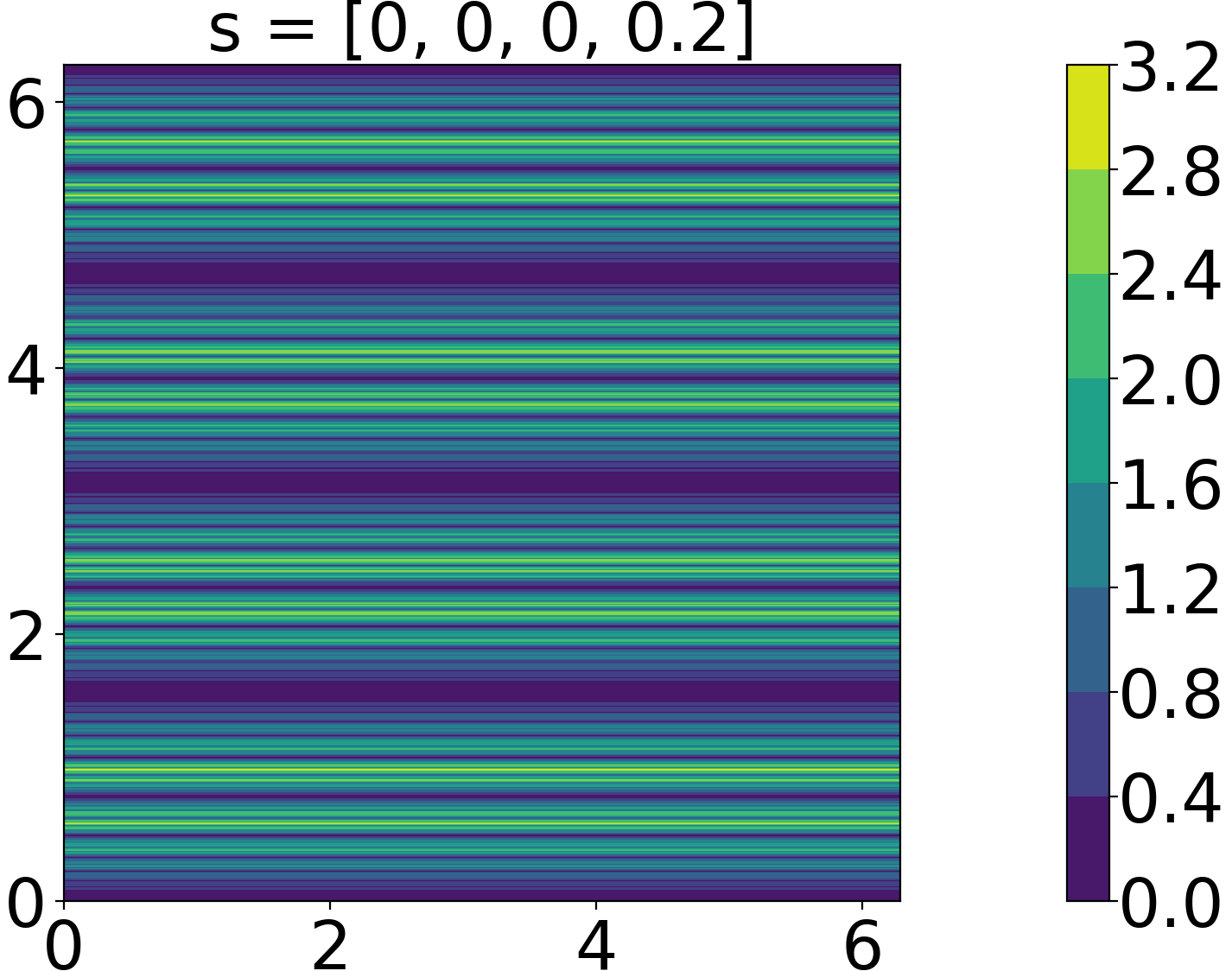}
\caption{Each plot shows a scaled SRB distribution achieved at the parameter values indicated on the title. A histogram of an orbit of length of 1.6 trillion is computed on a 400$\times$729 grid. The histogram values are scaled by their mean over all grid cells.}
\label{fig:srbBaker}
\end{figure}
The SRB distribution in each case is absolutely continuous on the unstable manifold, which is uniform and parallel to $\hat{x}^{(1)}$ in every case except the $s_3$ perturbation. In the perturbed map with $s_3$ being non-zero (bottom left of Figure \ref{fig:srbBaker}), the SRB measure appears smooth along its curved unstable manifold, and rough along its stable manifold, which is uniform and parallel to the $\hat{x}^{(2)}$ direction. As noted in section \ref{sec:stableUnstableSubspacesOfBakersMap}, while the unstable manifold of the perturbed map with $s_2 \neq 0$ is parallel to $\hat{x}^{(1)}$ everywhere, the stable manifold is curved around the $\hat{x}^{(2)}$ direction. This picture is consistent with smoothness of the map's SRB measure (top right of Figure \ref{fig:srbBaker}) in the $\hat{x}^{(1)}$ direction and the lack of smoothness in the vertical direction, which appears distinct from the $s_4$ case where the stable manifold is uniformly parallel to the $\hat{x}^{(2)}$ direction.  
\subsubsection{Sensitivities of a smooth objective function}
We make an arbitrary choice of a smooth objective function, $J([x^{(1)}, x^{(2)}]^T) = \cos{4x^{(2)}},$ and validate the parametric derivatives of its statistics computed by S3 in each perturbed Baker's map. The validation results are shown in Figures \ref{fig:stableUnstableSens} and \ref{fig:s3sens}.
\begin{figure}
\centering
    \includegraphics[width=0.49\textwidth]{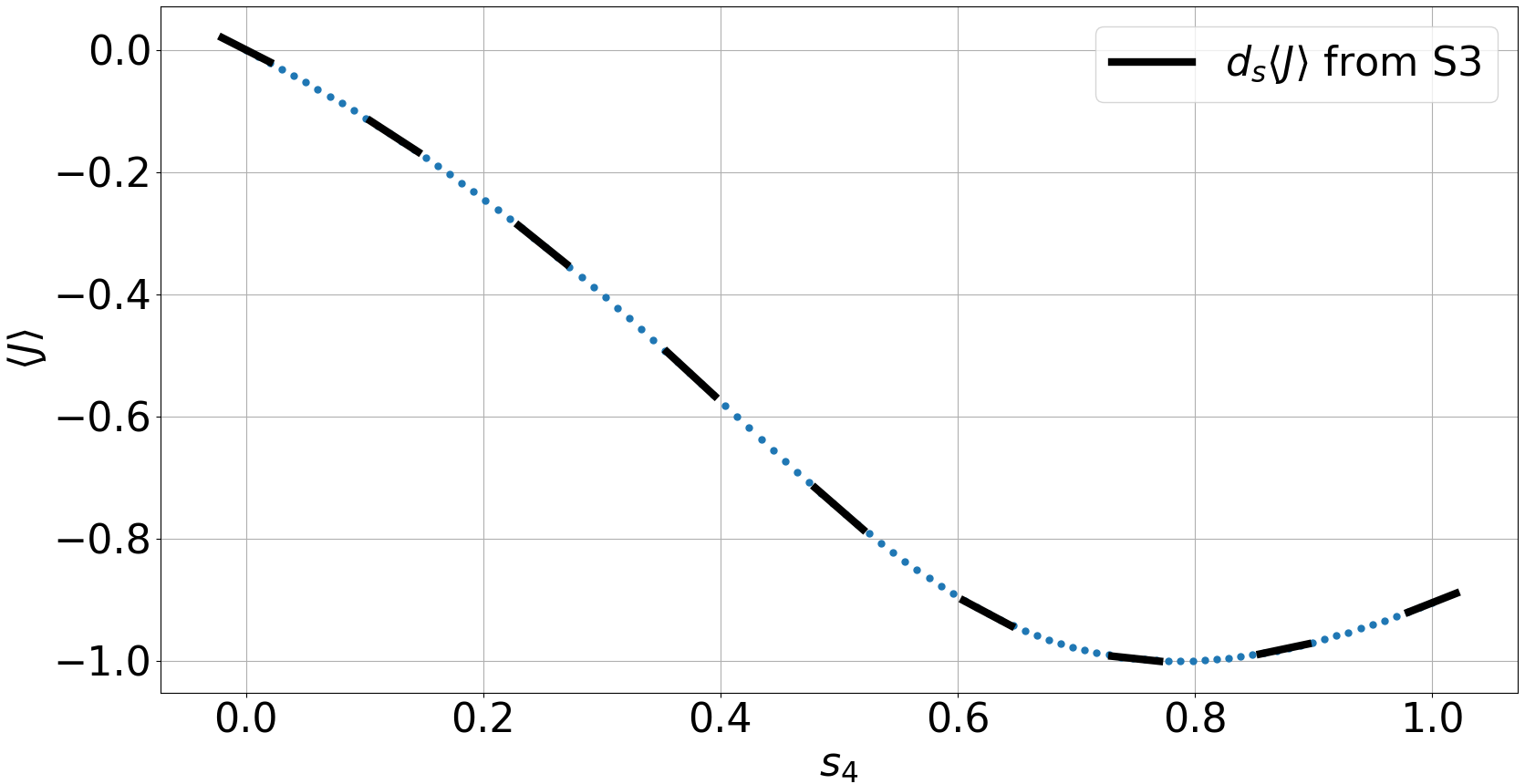}
    \includegraphics[width=0.49\textwidth]{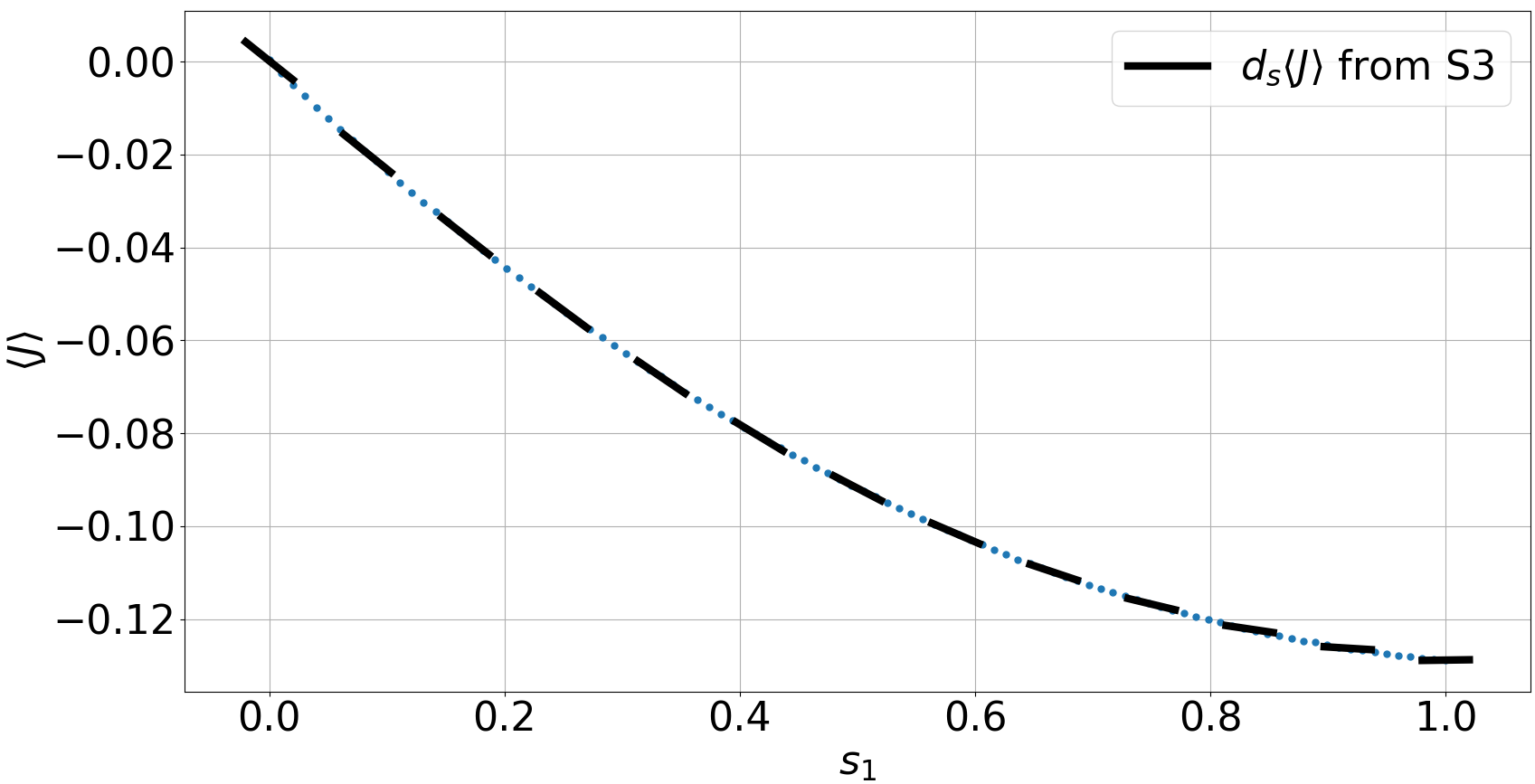}
    \caption{Ergodic average of the objective function $J = \cos(4x_2)$ as a function of $s_4$ (left) and $s_1$ (right); the other parameters are set to 0. Sensitivities from S3 are shown in black at select parameter values.}
    \label{fig:stableUnstableSens}
\end{figure}
In the left plot of Figure \ref{fig:stableUnstableSens}, only perturbations of $s_1$ are considered, while other parameters are held at 0. This leads to only an unstable contribution to the sensitivity because $\chi_{\varphi_s x} = [\sin{x^{(1)}}\;\;0]^T$ which is aligned with the unstable direction $\hat{x}^{(1)} = [1\:\: 0]^T$ at all $x.$ The ergodic/ensemble averages of $J$ are shown as a function of $s_1$ in blue; superimposed as black lines are the linearly extrapolated values of the S3 sensitivities computed at many different values of $s_1.$ We see that the derivatives of the $\langle J\rangle$ vs. $s_1$ are closely approximated by S3. This validates the computation of sensitivities to unstable perturbations by S3.
\begin{figure}
    \centering
    \includegraphics[width=0.49\textwidth]{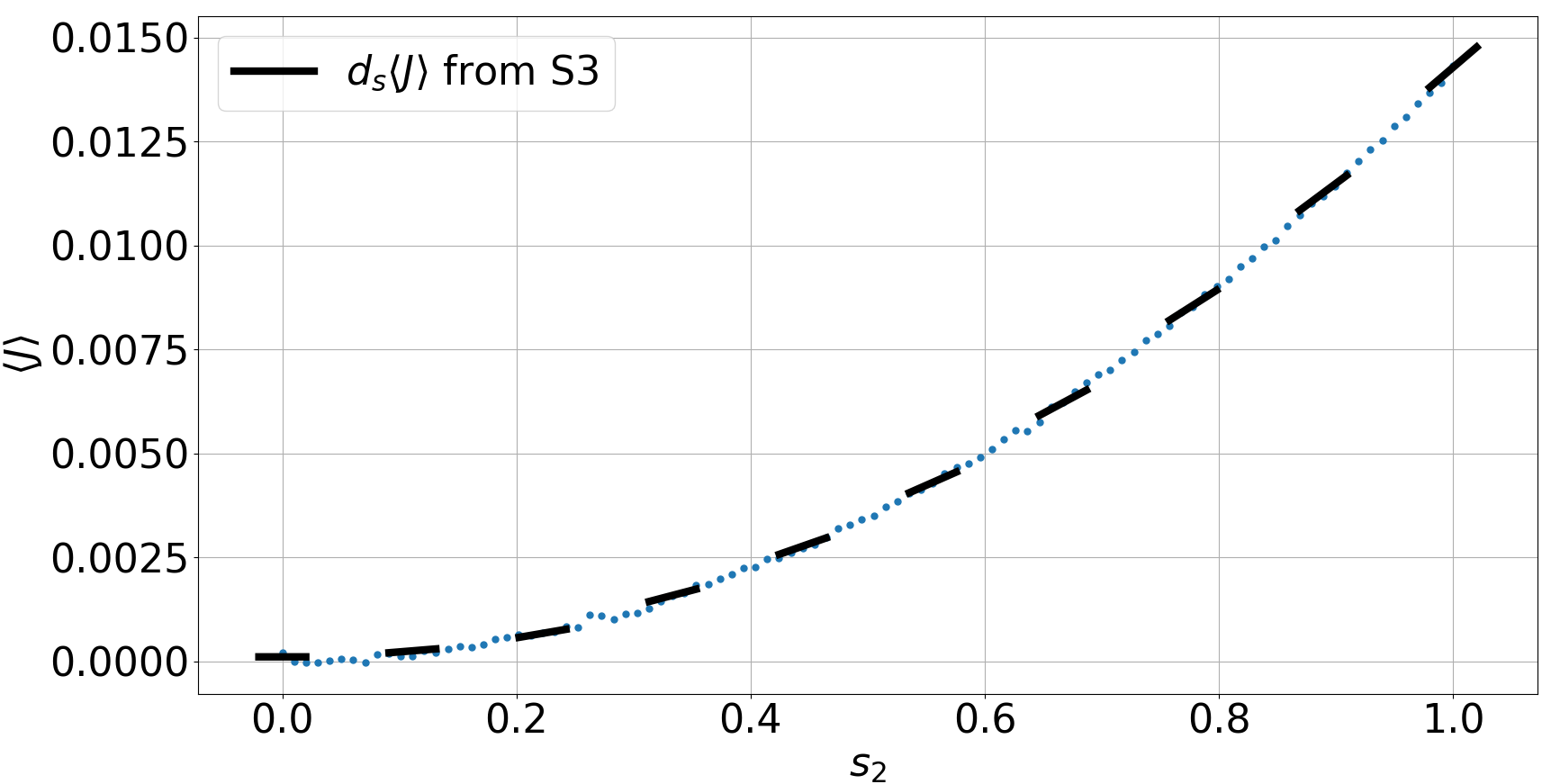}
    \includegraphics[width=0.49\textwidth]{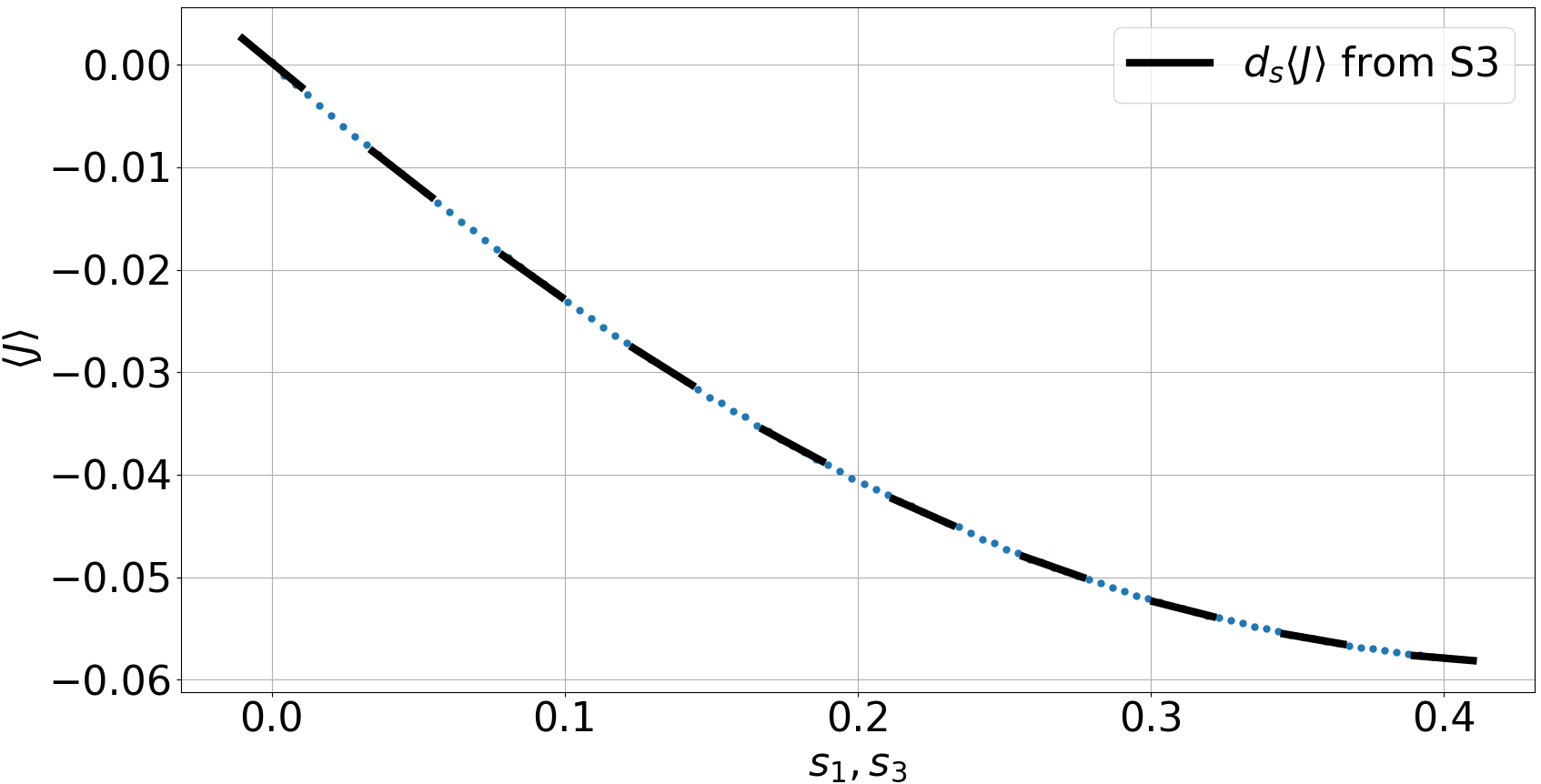}
    \caption{Ergodic average of the objective function $J = \cos(4x_2)$ as a function of $s_4$ (left) and $s_1 = s_3$ (right) for the Baker's maps in section \ref{sec:baker}; the other parameters are set to 0. Sensitivities from S3 are shown in black at select parameter values. }
    \label{fig:s3sens}
\end{figure}

Similarly, we show the validation of the S3 algorithm to stable perturbations, on the right plot of Figure \ref{fig:stableUnstableSens}. To generate these plots, ergodic/ensemble averages of $J$ are computed at perturbed maps where only $s_4$ is varied. In this case, $\chi$ is aligned with $\hat{x}^{(2)}$, which is the uniform stable direction. The S3 sensitivities, shown in black, are perfectly tangent to the response curves $\langle J\rangle$ vs. $s_4,$ as shown in Figure \ref{fig:stableUnstableSens} (right). 

To test S3 on maps with non-uniform stable and unstable directions, we apply the algorithm to compute linear response in maps with non-zero $s_2$ and $s_3$. At a non-zero $s_2$ parameter, when all other parameters are held at 0, the stable manifold is non-uniform, while the unstable manifold is uniform and tangent to $\hat{x}^{(1)}.$ The overall sensitivity contains only an unstable contribution, which is verified to be correct in Figure \ref{fig:s3sens} (left). As a final test, we put $s_1 = s_3$ and vary this parameter, which makes the unstable manifold curved and nonuniform. The response curves are shown (in blue) in Figure \ref{fig:s3sens}, on which the S3 sensitivities (black), which have both non-zero stable and unstable contributions, are plotted. Once again, the S3 derivatives are accurate over a range of parameter values. In each case, we chose $K = 11$ (terms summed in Ruelle's series) and $N = 500000$ (samples to compute ergodic averages), in the S3 algorithm; the blue points in each figure were ensemble averages over 160 million samples.
\subsection{Perturbations of the Solenoid attractor}
\label{sec:solenoid}
\begin{figure}
    \centering
    \includegraphics[width=0.32\textwidth]{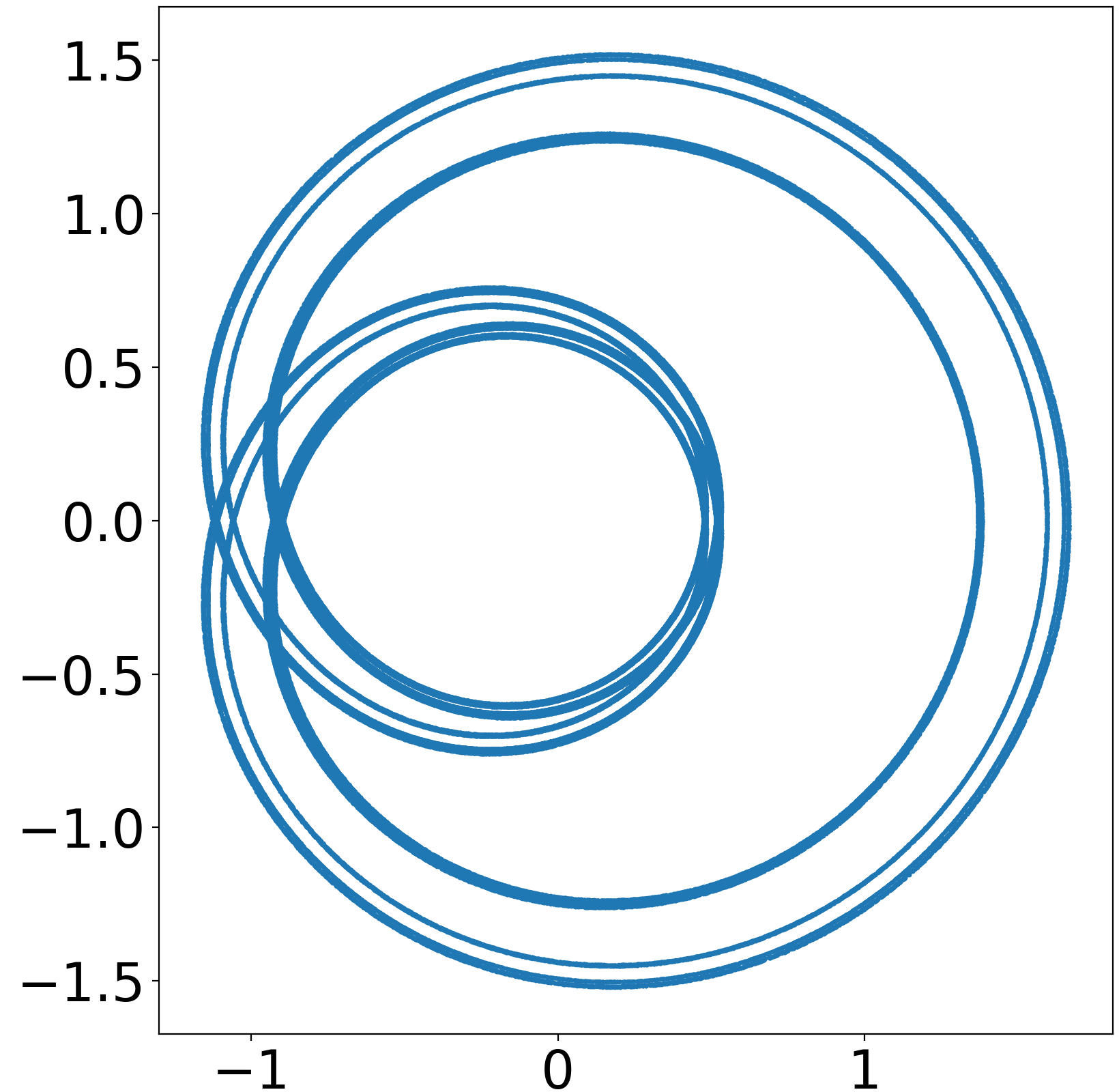}
    \includegraphics[width=0.32\textwidth]{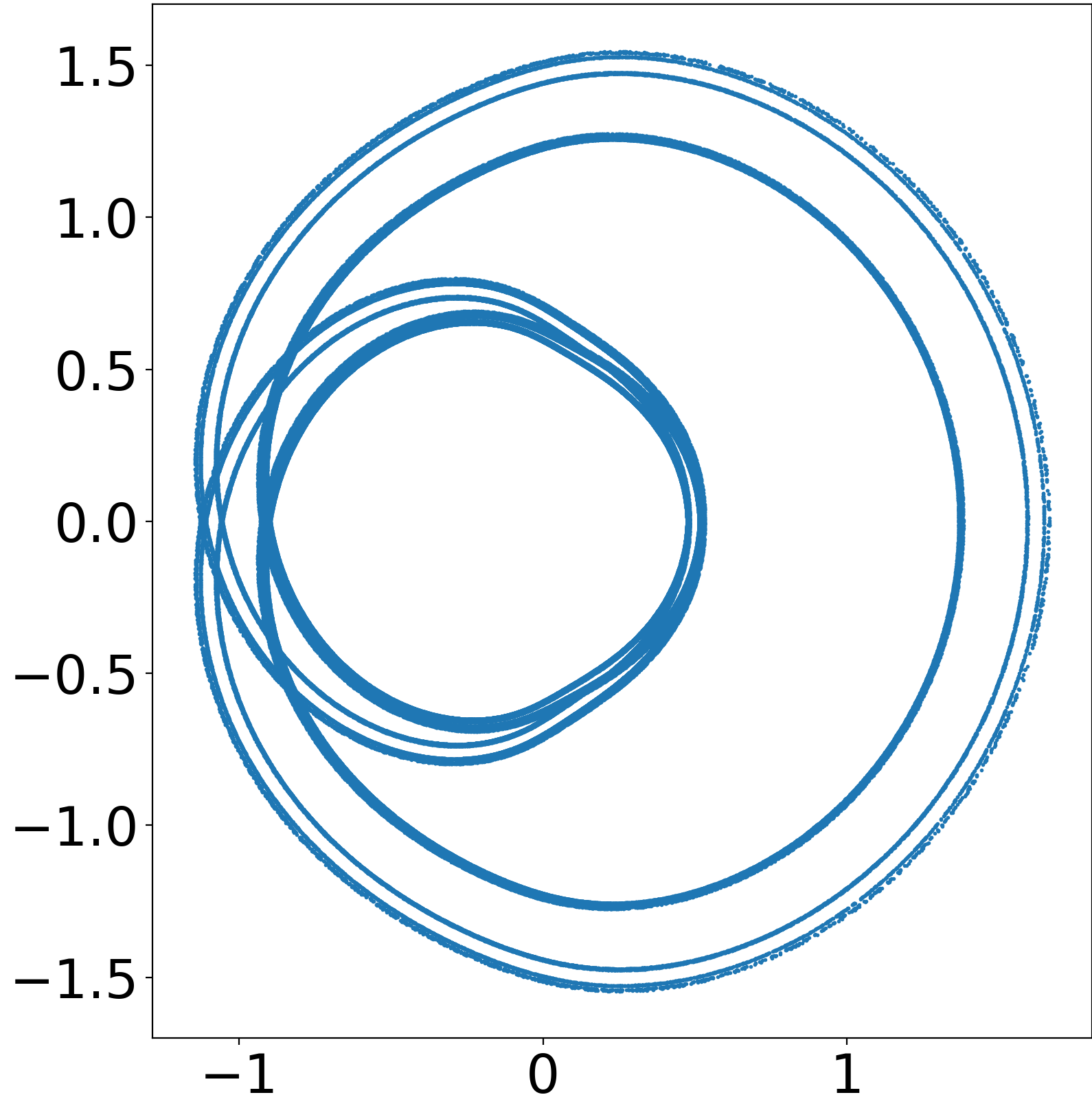}
    \includegraphics[width=0.32\textwidth]{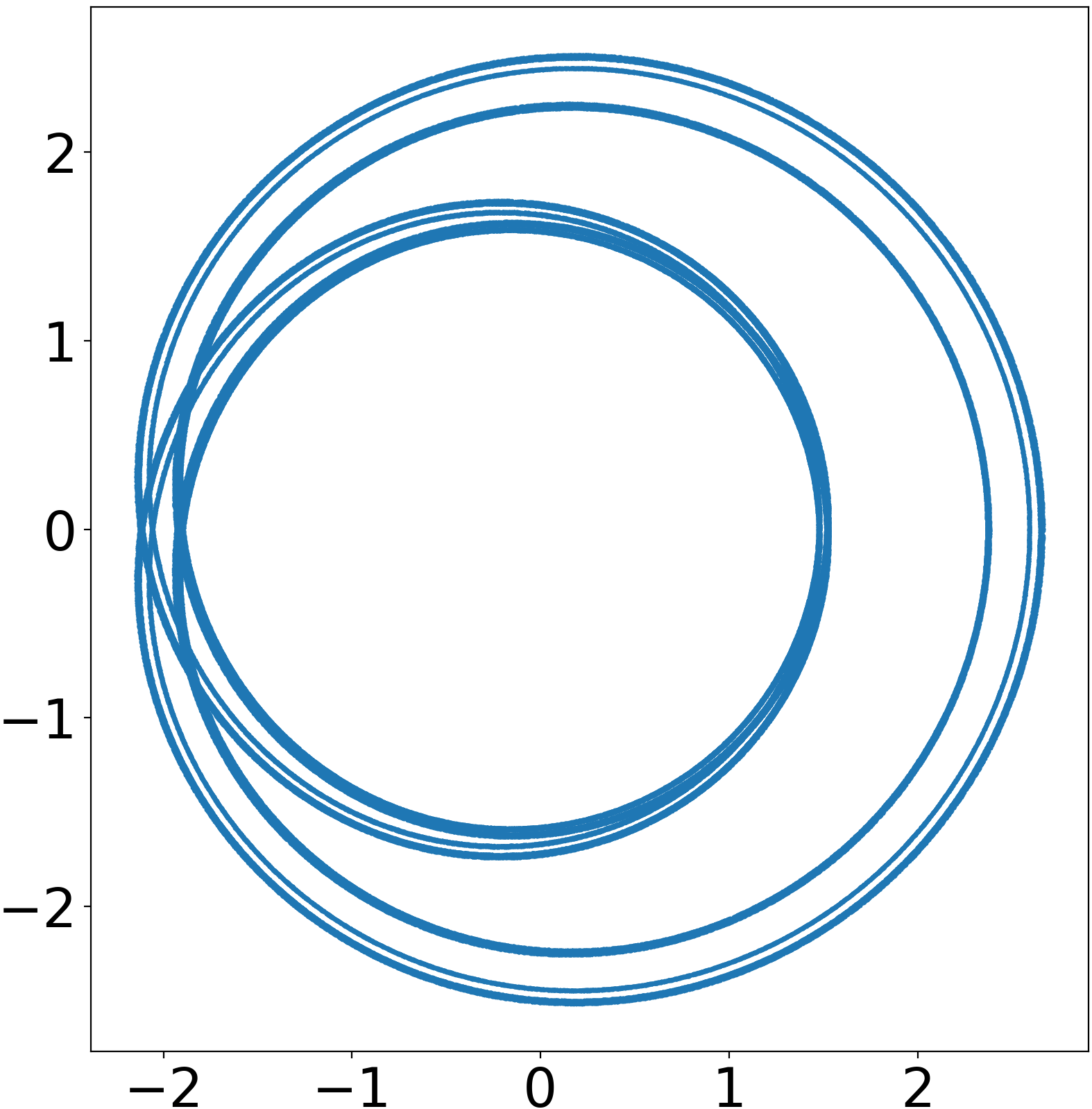}
    \caption{The projection of the Solenoid attractor on the  $x^{(1)}$-$x^{(2)}$ plane at $s = [1, 0]^T$ (left), $s = [1,1]^T$ (center) and $s = [2,0]^T$ (right) respectively.}
    \label{fig:solenoidAttractor}
\end{figure}
\begin{figure}
    \includegraphics[width=0.49\textwidth]{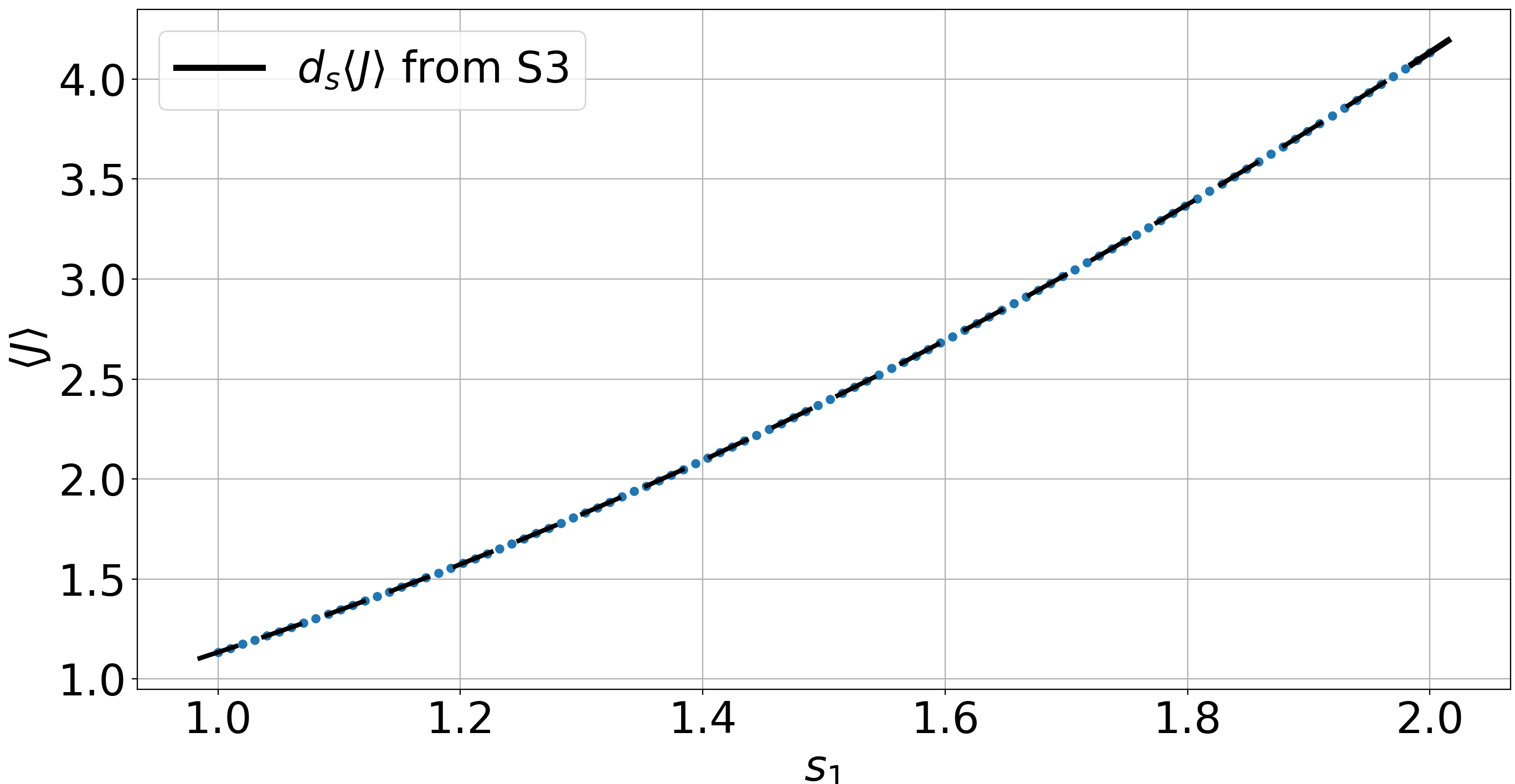}
    \includegraphics[width=0.49\textwidth]{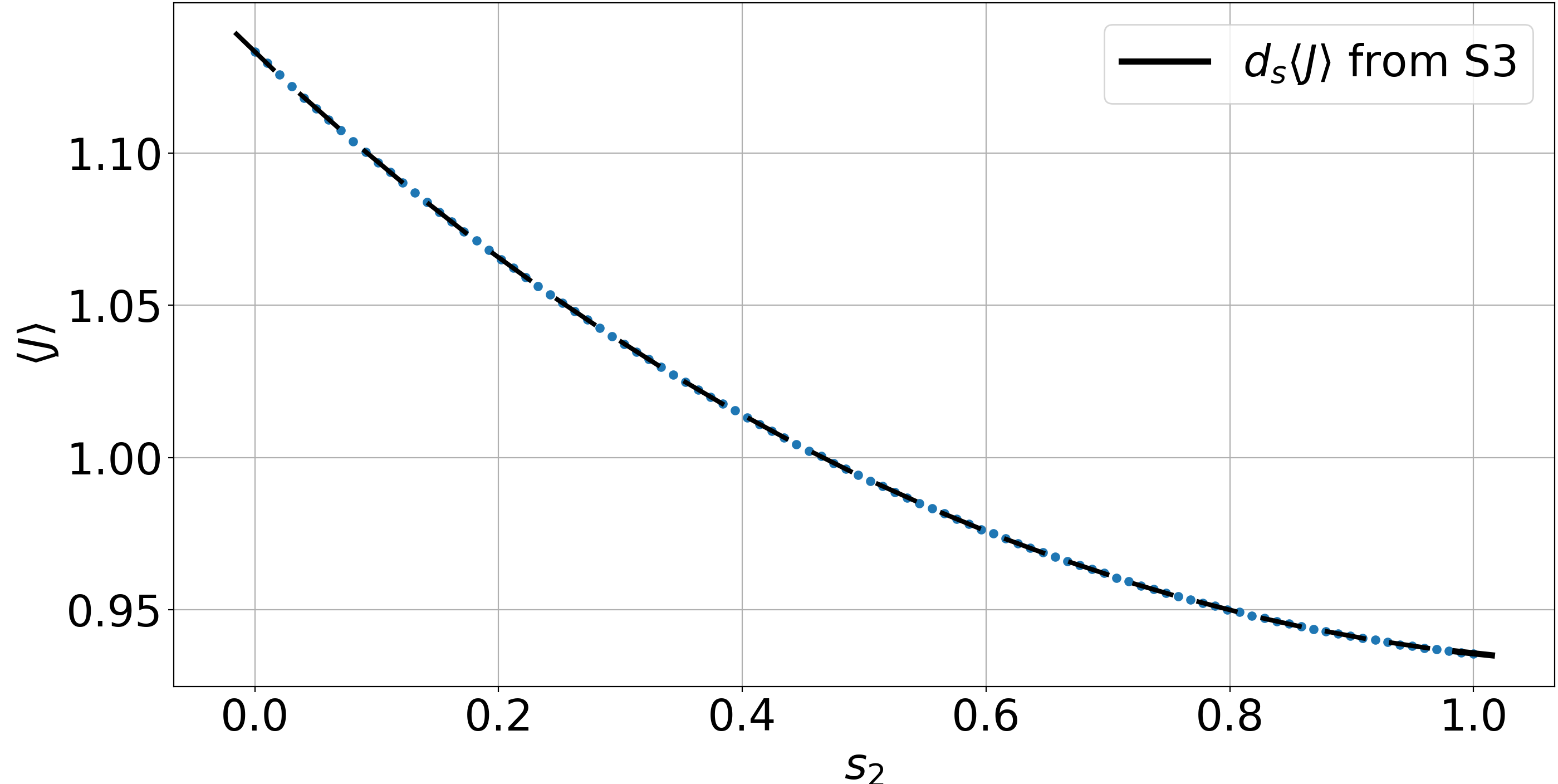}
    \caption{Linear response validation on the Solenoid attractor described in section \ref{sec:solenoid}. The sensitivities of $\langle J\rangle = \langle x^{(1)}.x^{(1)} + x^{(2)}.x^{(2)}\rangle$ with respect to the parameter $s_1$ (left) and $s_2$ (right) computed by S3 are shown as black lines at a number of parameter values on the $\langle J\rangle$ vs parameter curve. In each case, the other parameter is held constant at the reference value.}
    \label{fig:sol_sens}
\end{figure}

Next we consider the classical uniformly hyperbolic example of the Smale-Williams Solenoid map. We introduce a set of parameters $s = [s_1, s_2]^T,$ which takes the reference value $[1, 0]^T$. The perturbed Smale-Williams Solenoid map on the solid torus $\mathbb{T}^2$ is defined in cylindrical coordinates as,
\begin{align}
\label{eqn:solenoid}
    \varphi_s([{\rm r}, \theta, x^{(3)}]^T) = 
    \begin{bmatrix}
    s_1 + ({\rm r} - s_1)/4 + \cos \theta/2 \\
    \left( 2\theta + (s_2/4)\sin(4\theta) \right) \;{\rm mod}\; (2\pi)  \\
    x^{(3)}/4 + \sin\theta/2
    \end{bmatrix}.
\end{align}
The map can be expressed in Cartesian coordinates, denoted $[x^{(1)},x^{(2)},x^{(3)}]^T,$ by left and right compositions of $\varphi_s$ as in Eq. \ref{eqn:solenoid} with the following coordinate transformation and its inverse respectively: $x^{(1)} = {\rm r}\cos\theta,$ $x^{(2)} = {\rm r}\sin\theta.$ In Figure \ref{fig:solenoidAttractor}, we illustrate the effect of parameter variation on the Solenoid attractor, which is shown on the $x^{(1)}$-$x^{(2)}$ plane. 
\begin{figure}
    \centering
    \includegraphics[width=\textwidth]{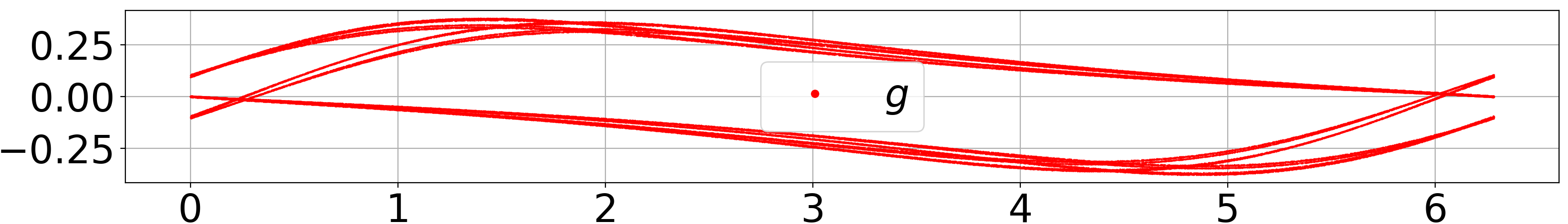}
    \includegraphics[width=\textwidth]{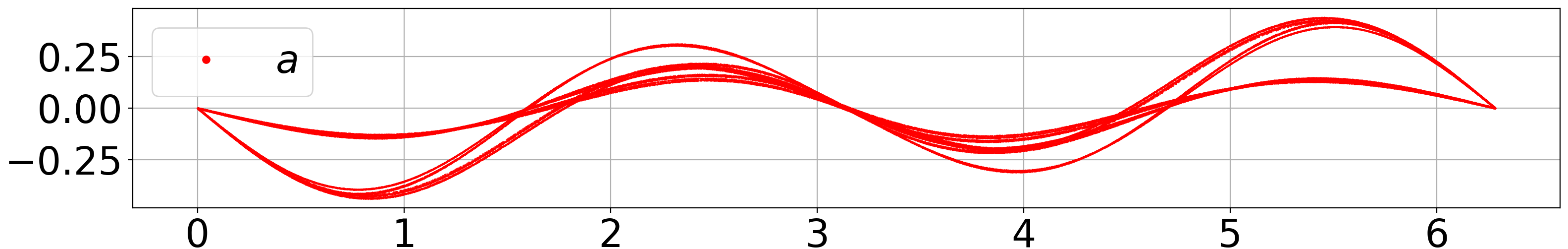}
    \includegraphics[width=\textwidth]{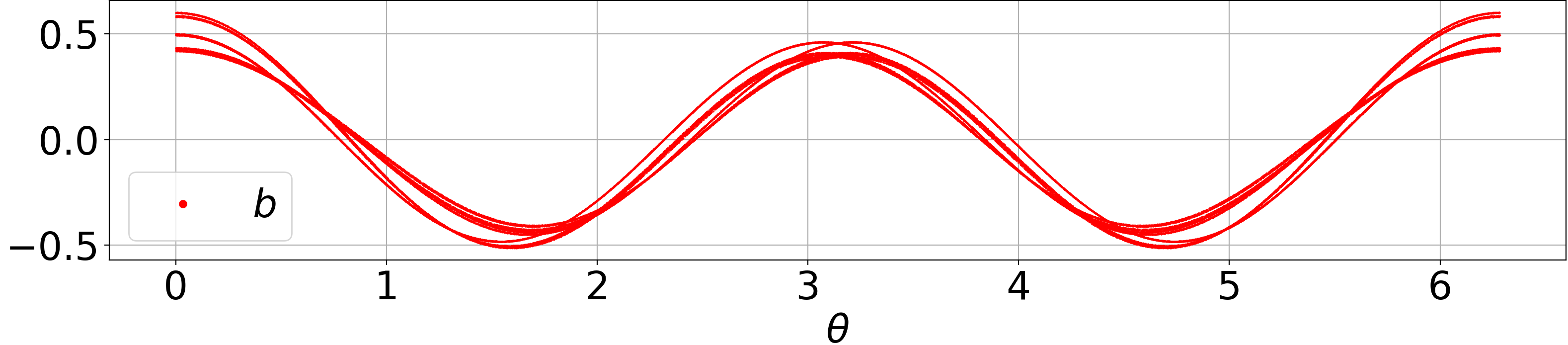}
    \caption{Three scalar fields computed for the unstable contribution: $g$ (top), $a$ (middle) and $b$ (bottom), as a function of $\theta.$ These functions were computed by the S3 algorithm (section \ref{sec:algorithm}) using an orbit of length  100,000 of the Solenoid map (section \ref{sec:solenoid}) with $s=[1, 0]^T.$}
    \label{fig:solenoid_metrics}
\end{figure}
The subfigures show the points on an orbit of length 2 million at the corresponding values of $s.$
On the left is the attractor of the unperturbed map at the standard value of $s=[1,0]^T.$ The center subfigure, at the value $s = [1,1]^T$, shows a change in the {\em shape} of the attractor due to an $s_2$ perturbation, while the rightmost subfigure indicates a change in the {\em size} of the attractor due to an $s_1$ perturbation. Qualitatively, we can also note change in the distribution of points on the perturbed attractor. For instance, the effect of an increase in $s_2$ is to reduce the probability of visiting the outer rim of the attractor. Both these types of qualitative changes, i.e., in the position and shape of, and in the distribution on the attractor are indicative of changes in the SRB measure and are captured precisely by the computation of Ruelle's formula. 
\begin{figure}
    \centering
    \includegraphics[width=0.49\textwidth]{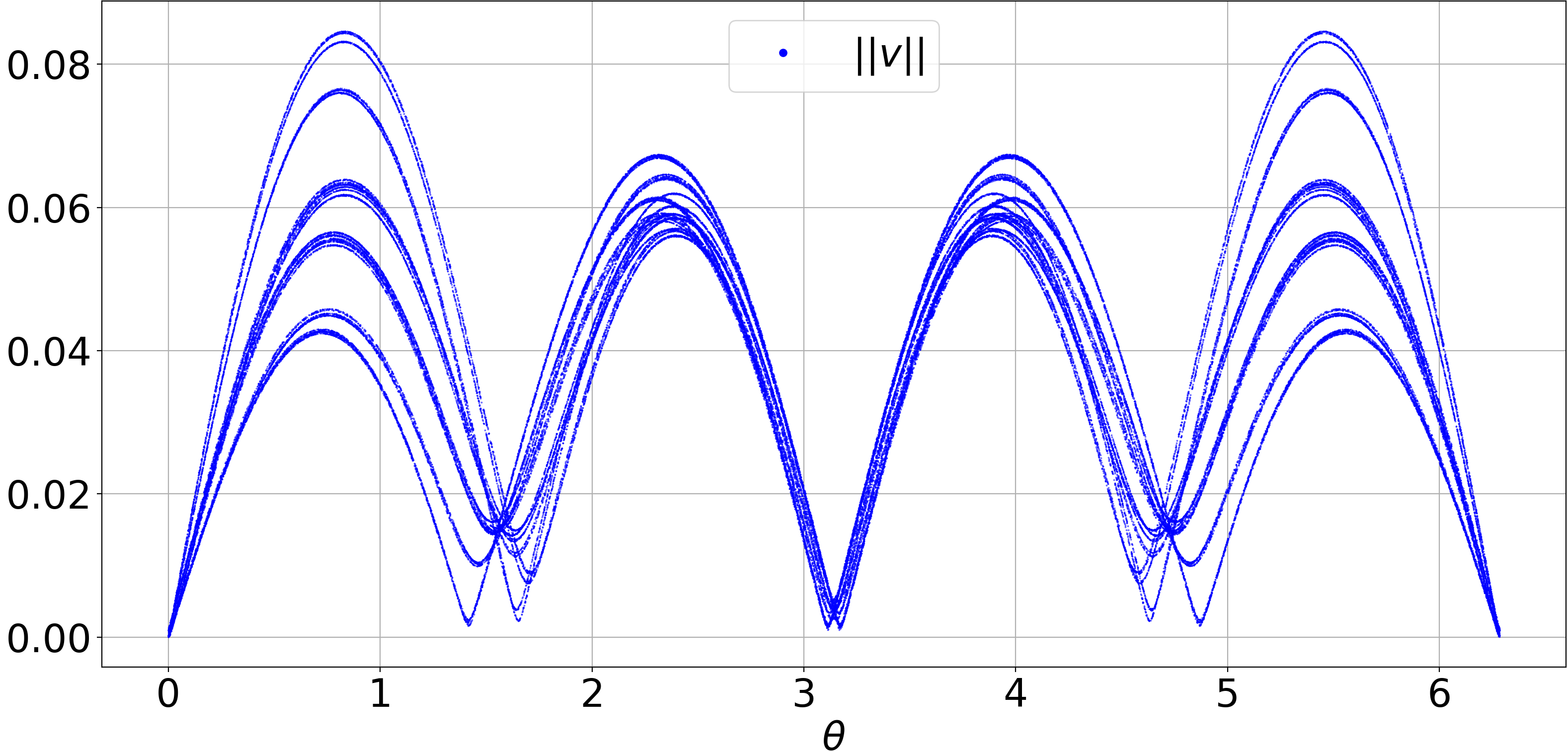}
    \includegraphics[width=0.49\textwidth]{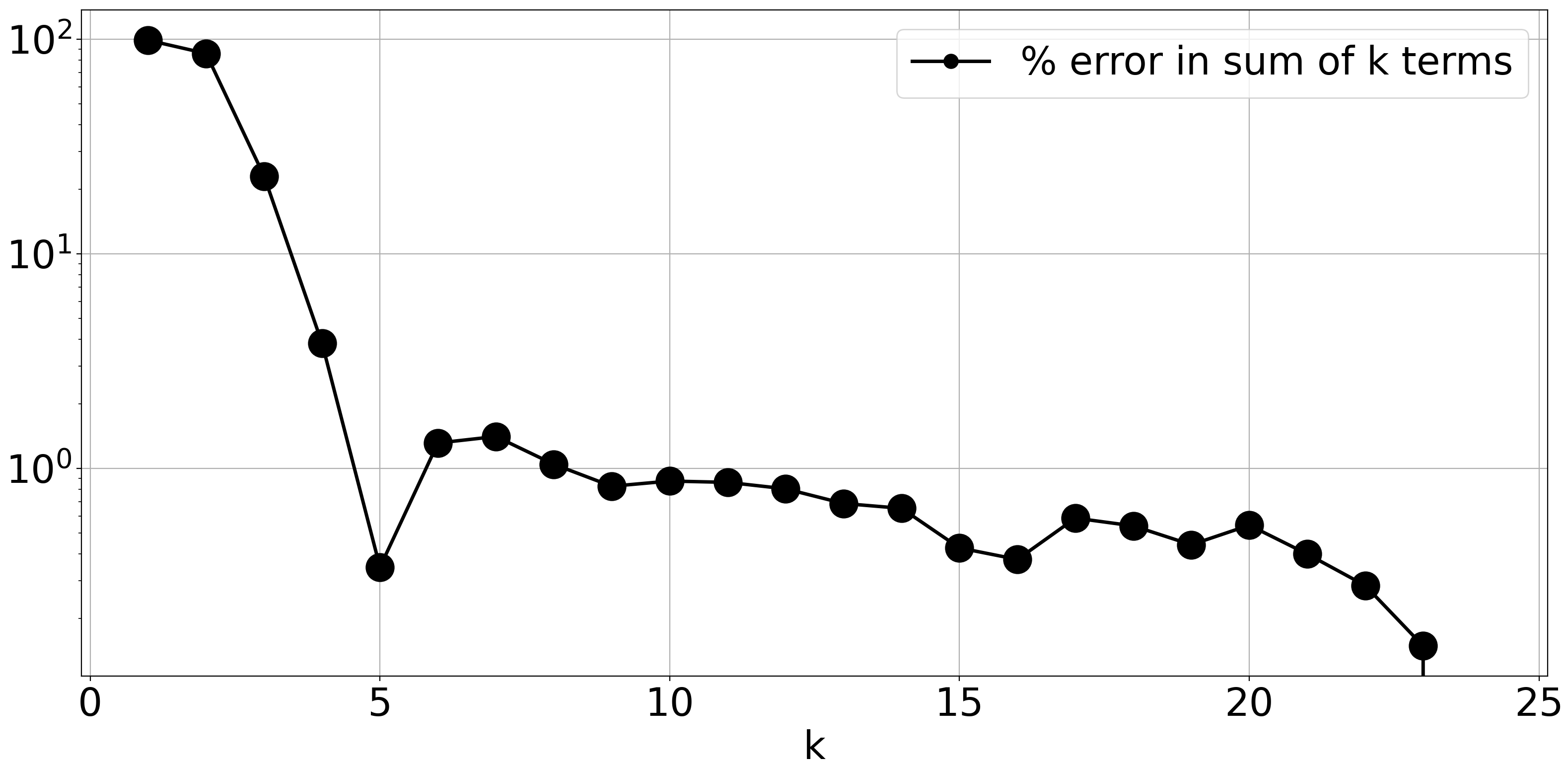}
    \caption{Left: the norm of the regularized tangent vector field computed by the S3 algorithm shown as a function of the $\theta$ coordinate. Right: percentage relative error in the unstable contribution when computed as a sum of the first $k$ terms, as a function of $k.$ The reference is taken as the unstable contribution computed with 24 terms. For both subfigures, the computations are performed along an orbit of length 100,000 of the Solenoid map (section \ref{sec:solenoid}) with $s=[1, 0]^T.$}
    \label{fig:solenoid_metric_1}
\end{figure}

By inspecting Eq. \ref{eqn:solenoid}, we can see that the ${\rm r}$ and $x^{(3)}$ coordinate directions are stable at each phase point. The unstable direction, however, is not aligned with $\hat{\theta}$ everywhere.  Thus, an $s_1$ perturbation corresponds to a stable vector field $\chi  \in E^s,$ and an $s_2$ perturbation has both stable and unstable components, albeit a small stable component. 

As done in \cite{nisha-s3}, had we split Ruelle's formula by decomposing the parameter perturbation into its stable and unstable components, the unstable contribution to the sensitivity is 0 for an $s_1$ perturbation. However, in the S3 algorithm, although the two terms of the split Ruelle's formula are called stable and unstable contributions, the perturbation vector field $\chi$ is not split along $E^s$ and $E^u$. Hence, with S3, it is not a priori clear whether either contribution is zero in either type of parameter perturbation although we expect that for the $s_1$ perturbation, the stable contribution dominates, and for the $s_2$, the unstable dominates.

\subsubsection{Performance of S3}
It is numerically verified that the S3 algorithm computes the correct sensitivity in both cases, as shown in Figure \ref{fig:sol_sens}. The objective function, $J$ is arbitrarily taken to be $J = {\rm r}^2 = x^{{(1)}^2} + x^{{(2)}^2},$ and the sensitivity of $\langle J\rangle$ is computed at a range of values of $s_1$ ($s_2$) shown on the left (right) of Figure \ref{fig:sol_sens}. The parameter that is not indicated on the horizontal axis is held constant at its reference value in each case. The computed S3 sensitivities closely match the slopes of the $\langle J\rangle$-vs-$s$ curves (shown as blue dots) that are computed from ergodic averages using orbits of length 160 million. The S3 sensitivities, which are linearly extrapolated and shown as black lines, are computed along orbits of length 3.2 million at each value of $s$. The source code to generate S3 results on the Solenoid map (as well as all other numerical results in this section) can be found at \cite{nisha-code}. 

\begin{figure}
    \centering
    \includegraphics[width=\textwidth]{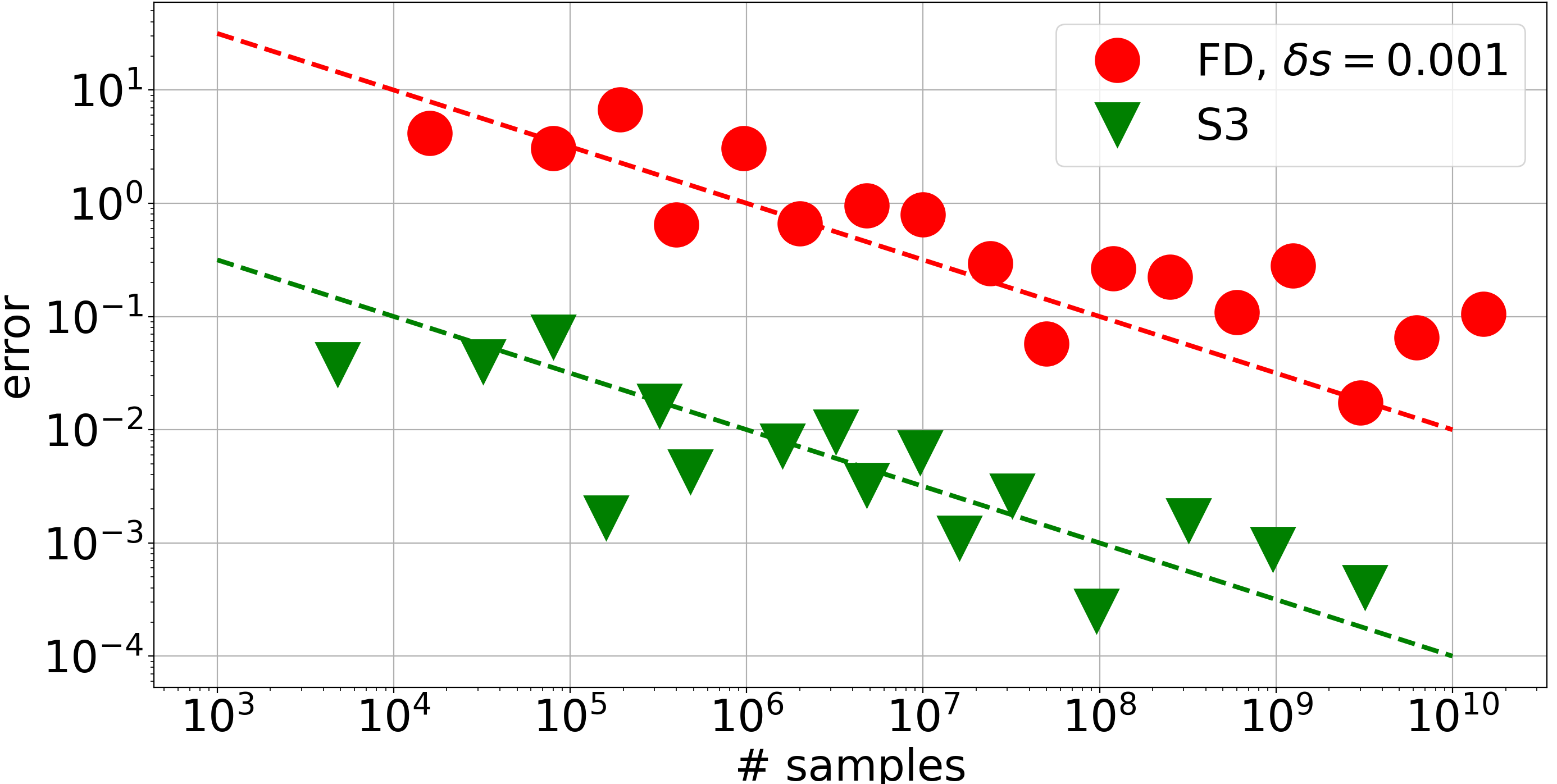}
    \caption{Comparison of relative error in the sensitivity of $\langle J\rangle$ to $s_2$ at $s= [1,0]$ of the Solenoid map (section \ref{sec:solenoid}) as a function of the number of samples, between S3 and finite difference. The relative error is computed with respect to the true value, which is taken to be the S3 derivative obtained with $N = 9.6$ billion samples. The finite difference in the parameter $s_2$ used to calculate the derivatives is set to 0.001 in order to obtain the red dots. The green triangles are the derivatives computed using the S3 algorithm described in section \ref{sec:algorithm}. The dotted lines have a slope of -0.5.}
    \label{fig:errorVsCost}
\end{figure}
Having validated the S3 sensitivities on the Solenoid attractor, given the low dimensionality of this problem, we can further extract and visualize intermediate quantities computed by the S3 algorithm. To generate the remaining results of this section, $s_1$ is fixed at 1, and the derivative of $\langle J\rangle = \langle x^{(1)}.x^{(1)} + x^{(2)}.x^{(2)}\rangle$ with respect to $s_2$ is computed at $s = [1,0]^T$. 

One of our main results (Theorem \ref{thm:thmS3}) asserts the differentiability of the scalar field $a$ and the vector field $v$ in the unstable direction (see section \ref{sec:proofOfTheorem1} for the proof), and that $g,$ $a$, $b$ are H\"older continuous fields. Another main result (Theorem \ref{thm:thmS3Computation}) proves the exponential convergence of the recursive computations of the functions $v,$ $a,$ $b$ and $g$ (see section  \ref{sec:proofOfTheorem2} for the proof). While numerical verification of these analytically proved results is beyond the scope of this paper, we at least confirm the boundedness of the quantities $a,$ $b$, $g$ and $\|v\|$ on a long orbit. In Figure \ref{fig:solenoid_metrics}, we show the functions $g$, $a$ and $b$ computed by the S3 algorithm, which are associated with the unstable contribution. We plot the functions as a function of $\theta,$ which is close but not exactly parallel to the unstable direction at each point. Each subfigure shows the values of the corresponding functions at each point of an orbit of length 100,000 with $s = [1,0]^T.$ The S3 algorithm is run to compute the derivative $d_{s_2}\langle J\rangle.$

On the left hand side of Figure \ref{fig:solenoid_metric_1}, we plot the norm of the regularized tangent vector field $\|v\|$ also as a function of the $\theta$ coordinate. Reassuringly, the norm of the regularized tangent vector field is always bounded. Further, as we noted in the previous subsection, the unstable contribution is the majority of the derivative in the case of the $s_2$ perturbation considered here. We confirm numerically that the unstable contribution dominates; the stable contribution is less than 0.01\% of the overall sensitivity.

Figure \ref{fig:solenoid_metric_1} (right) provides numerical evidence for the rapid convergence (due to correlation decay) of Eq. \ref{eqn:unstableContributionErgodicAverageRecalled} -- the unstable contribution. We show the percentage relative error in the unstable contribution computed with the first $k$ terms when compared to the baseline value calculated with 24 terms. Clearly, sums of terms in the series beyond 8 terms incur an error of less than 1\% compared to the sum of the first 24 terms. In fact, each term for $k > 6$ evaluates to less than 1\% of the overall sensitivity (calculated with 24 terms of the series).  

In Figure \ref{fig:errorVsCost}, a comparison of the error convergence of S3 against a na\"ive finite difference calculation is shown using the Solenoid map. The finite difference approximation of the same linear response calculated with $N$ samples is as follows, 
\begin{align}
\label{eqn:centeredDifference}
    \langle J, \partial_{s_2}\mu_s\rangle_{\rm FD} = \dfrac{\langle J\rangle_N([s_1,s_2 + \delta s]) - \langle J\rangle_N([s_1, s_2 - \delta s])}{2\delta s},
\end{align}
where $\langle J\rangle_N(s)$ is an $N$-sample Monte Carlo estimate of $\langle J\rangle(s).$ Such an estimate can be computed either as an ergodic average along a single trajectory initialized Lebesgue almost everywhere, or as a sample average at samples according to $\mu$ on the attractor.   To generate these plots efficiently, we do not compute ergodic averages along a single trajectory, as such a computation is a serial operation. Rather we sample average along multiple short trajectories in parallel, where each trajectory is initialized with a sample according to $\mu$, which is achieved after a sufficiently long run-up time, starting from Lebesgue a.e. We observe in Figure \ref{fig:errorVsCost} that the numerator of Eq. \ref{eqn:centeredDifference}  is overwhelmed by statistical noise. As a result of the law of the iterated logarithm \cite{denker_philipp_1984} that applies to $\langle J\rangle_N$ (asymptotically), the errors (ignoring the iterated logarithmic factor) in the finite difference decay as ${\cal O}(1/\sqrt{N}).$ This is confirmed by the finite difference data points being well-approximated by a line with slope -0.5 (shown as a dotted red line). However, the central difference approximation grows as ${\cal O}(1/\delta s).$ As a result, the finite difference derivatives have much larger errors -- in this case, two orders of magnitude larger errors -- than the S3 derivatives (which are shown as green triangles), for the same number of samples. For instance, to achieve a 10\% relative error, a central finite difference with $\delta s = 0.001$ requires more than a billion samples while the S3 algorithm shows less than 10\% error even at 10,000 samples. 

Note that the true value to compute relative error is taken to be the S3 derivative obtained from a $\sim $ 10 billion sample computation, which is verified against the slope of the response curve $\langle J\rangle$-vs-$s_2$ at $s_2 = 0.$
Finally, we note that the error convergence in S3 is as predicted by the analysis in sections \ref{sec:proofOfTheorem1} and \ref{sec:proofOfTheorem2}. In particular, in sections \ref{sec:convergenceOfTheStableContribution} and \ref{sec:unstableContributionProof} respectively, we show that the error in the stable contribution and that in the unstable contribution diminish at the rate of ${\cal O}(\sqrt{\log\log N}/\sqrt{N}),$ with $N$ samples. Consistent with these results, the dotted green line, which has a slope equal to -0.5, well approximates the S3 data points in Figure \ref{fig:errorVsCost} (we may ignore the $\sqrt{\log\log N}$ factor, which only has a negligible effect on the slope).

\section{Proof of Theorem \ref{thm:thmS3}}
\label{sec:proofOfTheorem1}
In this section, we prove Theorem \ref{thm:thmS3}, which establishes that the S3 decomposition of Ruelle's formula into stable and unstable contributions (Eq. \ref{eqn:splitRuellesFormula2}) is well-defined. We show that the S3 decomposition exists and is differentiable on the unstable manifold. In particular, we prove that the regularized perturbation field, $v,$ that is the limit of the sequence $\left\{ v^n\right\}$ in Eq. \ref{eqn:stableTangentVectorField}, exists. This proves Theorem \ref{thm:thmS3}-1. The second statement, Theorem \ref{thm:thmS3}-2., is shown by proving that this vector field $v$ is differentiable on the unstable manifold. We then prove that the differentiability in the unstable direction of $v$ implies that of the scalar field $a$, and hence Theorem \ref{thm:thmS3}-2. is proved. First we start by proving the existence and uniqueness of $v.$ Before we begin, we establish some notation for oblique projection operators on vector fields, and describe their properties that we use in the proofs.

\begin{notation}
Let $\stableProjection_x: T_x M \to T_x M$ denote the linear operator that gives the component of a tangent vector in 
$E^s_x,$ in a direct sum of components along $E^u_x$ and $E^s_x$. If $v_x = v_x^u + v_x^s$ where $v_x^u\in E^u_x$ and
$v_x^s\in E^s_x$, then, $\stableProjection_x\: v_x := v_x^s$. 
Applying $\stableProjection_x$ to every point on $M$ would result in a linear operator on vector
fields of $M$, hereafter denoted $\stableProjection$.
\end{notation}

\begin{remark}
\label{rmk:operatorNormOfS}
Note that $\unOrthProjection_x$ is an orthogonal projector.  Its norm is therefore 1.
In contrast, the operator $\stableProjection_x$ is not an orthogonal projector; $\stableProjection_x$ is uniformly bounded over $M$, i.e.,
$S:=\sup_{x\in M}\norm{\stableProjection_x}$ is finite.
\end{remark}

\begin{remark}
It follows from their definition that both operators are idempotent, i.e.,
$\unOrthProjection^2 = \unOrthProjection$ and $\stableProjection^2 = \stableProjection$.
\end{remark}

\begin{remark}
\label{rmk:stableProjectionProperty}
Due to the covariance of $E^s$ and $E^u,$ i.e.,
$d\varphi^n_x\: E^s_x = E^s_{\varphi^n x}$ and
$d\varphi^n_x\: E^u_x = E^u_{\varphi^n x},$
the operator $\stableProjection$ satisfies
$d\varphi_x^n \: \stableProjection_x = \stableProjection_{\varphi^n x} \: d\varphi^n_x.$
Operating on vector fields on $M$, this equality leads to
$d\varphi^n \: \stableProjection = \stableProjection \: d\varphi^n.$
\end{remark}

\begin{remark}
\label{rmk:pvispsv}
At every $x \in M,$ both $\stableProjection_x$ and $\unOrthProjection_x$ map $E^u_x$ to 0 by their definitions.
Also by definition, for any $v_x\in T_x M$,
both $v_x - \stableProjection_x\: v_x$ and $v_x - \unOrthProjection_x\: v_x$ are in $E^u_x$.
We thus have $\unOrthProjection_x\: v_x = \unOrthProjection_x \: \stableProjection_x\: v_x$ and
$\stableProjection_x\: v_x = \stableProjection_x \: \unOrthProjection_x\: v_x$. 
The equivalent expressions, as operators on vector fields, are
$\unOrthProjection \: \stableProjection = \unOrthProjection$ and $\stableProjection \: \unOrthProjection = \stableProjection$.
\end{remark}

We now prove that a regularized perturbation field $v$ exists, which is the limit of the tangent equation solved with repeated projections (section \ref{sec:stableContribution}). We note that, in all the proofs that follow, the constants $C$ and $\lambda$ refer to the eponymous constants in the definition of uniform hyperbolicity (section \ref{sec:background}), and the constant $S$ refers to the norm of the stable projection operator $\stableProjection$, as defined above (Remark \ref{rmk:operatorNormOfS}); all other constants, such as $A, c,$ etc. may vary from line to line.

\subsection{Existence and uniqueness of a regularized perturbation field}
\begin{lemma}
\label{lem:existenceAndUniquenessOfv}
For any bounded vector field $\chi: M \to \mathbb{R}^m$, there exists a unique bounded vector field $v$
on $M$ that satisfies 
\begin{align} \label{eqn:stable_tangent}
v = \unOrthProjection\big(d\varphi\,v + \chi\big)
\end{align}
\end{lemma}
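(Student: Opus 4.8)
The plan is to read \eqref{eqn:stable_tangent} as a fixed-point equation for the affine operator $\mathcal{L}v := \unOrthProjection(d\varphi\,v + \chi)$ on the Banach space $\mathcal{B}$ of bounded vector fields on $M$ under the sup norm. Starting from an arbitrary bounded $v^0$, I would form the iterates $v^{n+1} = \mathcal{L}v^n$ and show that the consecutive differences $\delta^n := v^{n+1}-v^n$ are summable in sup norm. Because $\mathcal{L}$ is affine with linear part $\unOrthProjection d\varphi$, the inhomogeneous term cancels and $\delta^n$ obeys the purely linear recursion $\delta^{n+1} = \unOrthProjection d\varphi\,\delta^n$, so $\delta^n = (\unOrthProjection d\varphi)^n\,\delta^0$. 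The entire convergence question thus reduces to bounding the powers of $\unOrthProjection d\varphi$.

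The crux, and the step I expect to be the main obstacle, is that $\unOrthProjection d\varphi$ need not contract in a single application: a vector in $E^{u\perp} = \mathrm{range}\,\unOrthProjection$ generically has a nonzero component along $E^u$ in the \emph{oblique} splitting $E^s\oplus E^u$, and $d\varphi$ expands that component before $\unOrthProjection$ annihilates it. The remedy is to establish the operator identity $(\unOrthProjection d\varphi)^n = \unOrthProjection\,(d\varphi)^n\,\stableProjection$ for all $n\ge 1$, which quarantines the dynamics to the stable subspace. I would prove this by induction on $n$, using precisely the algebraic relations assembled in the preceding remarks: since both $\unOrthProjection$ and (after pushforward) $d\varphi$ kill the $E^u$ component, one gets $\unOrthProjection d\varphi\,\unOrthProjection = \unOrthProjection d\varphi$ and $\unOrthProjection d\varphi = \unOrthProjection d\varphi\,\stableProjection$; the covariance relation $d\varphi\,\stableProjection = \stableProjection\,d\varphi$ (Remark~\ref{rmk:stableProjectionProperty}) together with $\stableProjection\,\unOrthProjection = \stableProjection$ and $\unOrthProjection\,\stableProjection = \unOrthProjection$ (Remark~\ref{rmk:pvispsv}) then commutes the projectors through the product to close the induction.

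With this identity in hand the estimate is immediate and uniform over $M$. Since $\stableProjection\,\delta \in E^s$, the stable contraction \eqref{eqn:stable} gives $\norm{(d\varphi)^n\,\stableProjection\,\delta} \le C\lambda^n\,\norm{\stableProjection\,\delta} \le C S\lambda^n\,\norm{\delta}$, where $S = \sup_{x}\norm{\stableProjection_x}$ is finite (Remark~\ref{rmk:operatorNormOfS}); combined with $\norm{\unOrthProjection} = 1$ this yields $\norm{(\unOrthProjection d\varphi)^n\,\delta} \le C S\lambda^n\,\norm{\delta}$ in sup norm. Hence $\norm{\delta^n} \le C S\lambda^n\,\norm{\delta^0}$ with $\lambda\in(0,1)$, so $\sum_n \norm{\delta^n} < \infty$ and $\{v^n\}$ is Cauchy in $\mathcal{B}$, converging uniformly to a bounded limit $v$. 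Passing to the limit in $v^{n+1} = \unOrthProjection(d\varphi\,v^n + \chi)$ is legitimate because $\unOrthProjection d\varphi$ is a bounded, hence continuous, linear operator, and shows that $v$ solves \eqref{eqn:stable_tangent}.

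Finally, uniqueness follows from the same power bound: if $v$ and $v'$ both satisfy \eqref{eqn:stable_tangent}, their difference is fixed by $\unOrthProjection d\varphi$, so $v - v' = (\unOrthProjection d\varphi)^n(v-v')$ for every $n$ and $\norm{v-v'} \le C S\lambda^n\,\norm{v-v'} \to 0$, forcing $v = v'$. I note that the geometric rate $\lambda^n$ produced here is exactly what underlies the claimed exponential convergence of the regularized tangent solutions to $v$ along a reference orbit, and that the argument uses nothing about $\chi$ beyond boundedness and nothing about the choice of $v^0$, which is why the limit is independent of the initialization.
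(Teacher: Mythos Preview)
Your proposal is correct and follows essentially the same route as the paper: iterate $v^{n+1}=\unOrthProjection(d\varphi\,v^n+\chi)$, reduce consecutive differences to $(\unOrthProjection d\varphi)^n$, collapse this to $\unOrthProjection\,d\varphi^n\,\stableProjection$ via the projector identities in Remarks~\ref{rmk:stableProjectionProperty} and~\ref{rmk:pvispsv}, and invoke the stable contraction to get a geometric Cauchy bound and uniqueness. The only cosmetic difference is that you frame the identity $(\unOrthProjection d\varphi)^n=\unOrthProjection d\varphi^n\stableProjection$ as an induction, whereas the paper derives it by a short chain of algebraic substitutions.
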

Note that because $\unOrthProjection^2 = \unOrthProjection$, Eq. \ref{eqn:stable_tangent} implies that
$\unOrthProjection v = v$.  In other words, at every $x\in M$, $v_x$ is orthogonal to
the one-dimensional unstable subspace $E^u_x$.  Also, a vector field $\chi$ being bounded
means that $\sup_{x\in M}\|\chi_x\|$ is finite. 

\begin{proof}
We first prove existence.  Let $v^0=0$ be an arbitrary bounded vector field on $M.$
Let $v^{k+1} = \unOrthProjection\big(d\varphi\,v^k + \chi\big)$ for $k=0,1,\ldots$
We will show that for each $x\in M$, $\{v^k_x, k=0,1,\ldots\}$ is a Cauchy sequence
and thus converges to a limit, namely $v_x$.

From the definition of $v^k$ and the linearity of the operators involved, we have
$v^{k+1} - v^k = \unOrthProjection\: d\varphi (v^k - v^{k-1}) = (\unOrthProjection\: d\varphi)^k (v^1 - v^0).$
Here, we can use the relations between $\unOrthProjection, \stableProjection,$ and $d\varphi$
(See Remarks \ref{rmk:stableProjectionProperty} and \ref{rmk:pvispsv}) to simplify the
linear operator as follows. Since $\unOrthProjection = \unOrthProjection\stableProjection,$ $(\unOrthProjection \: d\varphi)^k = (\unOrthProjection \:\stableProjection\: d\varphi)^k = (\unOrthProjection\: d\varphi \:\stableProjection)^k,$ where the second equality follows from Remark \ref{rmk:stableProjectionProperty}. Now using $\stableProjection \unOrthProjection = \stableProjection$ (Remark \ref{rmk:pvispsv}), $ (\unOrthProjection \: d\varphi)^k = (\unOrthProjection\: d\varphi \:\stableProjection)^k = \unOrthProjection\: d\varphi\: (d\varphi \: \stableProjection)^{k-1}$, and finally using Remark \ref{rmk:stableProjectionProperty},  $(\unOrthProjection \: d\varphi)^k = \unOrthProjection \: d\varphi\: (d\varphi \: \stableProjection)^{k-1} = \unOrthProjection \:d\varphi^k\: \stableProjection.$

From inequality \ref{eqn:stable} and the uniform boundedness of $\|\stableProjection_x\|$ by $S$ (Remark \ref{rmk:operatorNormOfS}),
$\|d\varphi_x^k\:\stableProjection_x\| \leq C \: \lambda^k\:S.$  Together with $\|\unOrthProjection_x\|=1$, we have a
uniform bound
\begin{align} \label{eqn:operator_norm_k}
    \|(\unOrthProjection\: d\varphi)_x^k\| = \|\unOrthProjection_{\varphi^k x}\: d\varphi^k_x\:\stableProjection_x\| \leq C \: \lambda^k \: S\;, \quad \forall\; x \in M.
\end{align}
Thus,
\begin{align}
\label{eqn:vnminusvnminus1}
    \norm{v^{k+1}_{x} - v^k_x} = \norm{(\unOrthProjection\: d\varphi)^k_{\varphi^{-k} x} (v^1_{\varphi^{-k} x} - v^0_{\varphi^{-k} x})}\leq C \: \lambda^k\: S\: \norm{v^1_{\varphi^{-k} x} - v^0_{\varphi^{-k} x}}.
\end{align}
Because $v^0$ is bounded by definition, and $v^1$ is bounded due to the boundedness 
of $v^0, \chi,$ and $d\varphi$, there exists an $A > 0$ such that $C\: S\:\norm{v^1_{x} - v^0_{x}} < A$
for all $x\in M$.  Thus, for all $k\ge 0$, and $x \in M,$
\begin{align}
    \norm{v^{k+1}_{x} - v^k_x} \leq A \: \lambda^k.
\end{align}
And, for all $m > n \geq 0,$
\begin{align} \label{eqn:norm_diff_vmn}
    \norm{v^m_x - v^n_x} & \leq \sum_{k=n}^{m-1} \norm{v^{k+1}_{x} - v^k_{x}}
    \leq A\:\sum_{k=n}^{m-1} \lambda^k  < A \: \frac{\lambda^n}{1-\lambda}.
\end{align}
Then, given $\epsilon > 0$, choosing $N > \lfloor \log|A/(\epsilon(1-\lambda))|/\log|1/\lambda| \rfloor,$ for all 
such that $m > n\ge N,$
\begin{align}
    \norm{v^m_x - v^n_x} \leq \epsilon, \:\: \forall\: x \in M.
\end{align}
Thus $\left\{v^n_x\right\}$ is a \emph{uniform} Cauchy sequence, and thus the sequence $\left\{ v^n \right\}$ converges uniformly on $M.$

Equation \ref{eqn:norm_diff_vmn}, applied to $n=0$, also implies that $\norm{v^m_x} \leq \norm{v^0_x} + \frac{A}{1-\lambda}$, for all $x$ and $m$.  The limit
of the Cauchy sequence thus uniformly satisfies the same bound.

We now show this limit, defined by the vector field $v$ as $v_x := \lim_{n\to \infty} v^n_x$, is \textbf{unique} in satisfying Eq. \ref{eqn:stable_tangent}.
Suppose $\Delta v$ is a bounded vector field such that $v+\Delta v$
also satisfies Eq. \ref{eqn:stable_tangent}.  Without loss of generality, let
$\Delta v$ be bounded by 1, i.e., $\|\Delta v_x\| \leq 1$ for all $x\in M$. Due to the linearity of the operators involved,
$\Delta v = \unOrthProjection\:d\varphi\,\Delta v$, and by iteration,
$\Delta v = (\unOrthProjection\:d\varphi)^k\,\Delta v$ for any $k\in \mathbb{N}$.  Using Eq. \ref{eqn:operator_norm_k},
$\|(\Delta v)_x\| \le C\:\lambda^k\:S \:\|(\Delta v)_{\varphi^{-k}x}\| \le C\:\lambda^k\:S$ for all $x\in M$.
Because $\lambda < 1$ and this inequality holds for all $k$, $\|\Delta v_x\|$ must be 0.
Thus $\Delta v=0$.  The uniqueness of $v$ follows.
\end{proof}
We remark that, without loss of generality, starting from $v^0 = 0,$ we obtain the following simple expressions for the regularized tangent vector field $v,$ 
\begin{align}
    v = \sum_{n=0}^\infty (\unOrthProjection\: d\varphi)^n \unOrthProjection \chi = \sum_{n=0}^\infty \unOrthProjection\: d\varphi^n \stableProjection \unOrthProjection \chi = \sum_{n=0}^\infty \unOrthProjection\: d\varphi^n \stableProjection \chi,
\end{align}
where we have used Remark \ref{rmk:pvispsv} to obtain the third equality.
This establishes the existence of the regularized perturbation field $v$, and completes the proof of Theorem \ref{thm:thmS3}-1. Having shown that $v$ exists, we now show that it is differentiable in the unstable direction. This differentiability is used to prove Theorem \ref{thm:thmS3}-2. Before this, we first show that $q$ is differentiable in its own direction. Although it is known that self-derivatives of the unstable/stable subspaces exist (see, for instance, Remark after Lemma 19.1.7 of \cite{katok}), we include a proof here for completion.
\subsection{Existence of self-derivative of the unstable direction}

We refer to as the unstable self-derivative, the vector field $w := d^2_{\xi} \Phi^x.$ We now show its existence.  
\begin{lemma}
\label{lem:existenceAndUniquenessOfw}
    There exists a unique, bounded vector field $w$ that satisfies
    \begin{align}
    \label{eqn:definitionOfw}
        w = \unOrthProjection \dfrac{d^2 \varphi (q, q) + d\varphi\: w}{\alpha^2},
    \end{align}
where $\alpha := \|d\varphi\: q\|.$
\end{lemma}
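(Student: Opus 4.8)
The plan is to mimic the proof of Lemma~\ref{lem:existenceAndUniquenessOfv} almost verbatim, since Eq.~\ref{eqn:definitionOfw} has the same structural form $w = \unOrthProjection(d\varphi\, w / \alpha^2 + \text{source})$ as the regularized tangent fixed-point equation, with the source term $\unOrthProjection\, d^2\varphi(q,q)/\alpha^2$ playing the role of $\unOrthProjection\chi$. First I would define the iteration $w^{k+1} = \unOrthProjection\big(d^2\varphi(q,q) + d\varphi\, w^k\big)/\alpha^2$ starting from $w^0 = 0$ and show that $\{w^k_x\}$ is a uniform Cauchy sequence. The key observation is that the effective linear operator driving the iteration is $\unOrthProjection\, d\varphi\,(\cdot)/\alpha^2$ rather than $\unOrthProjection\, d\varphi$, so I must control the norm of its $k$-fold composition.

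The main technical point is to establish a geometric bound on $\big\|(\unOrthProjection\, d\varphi/\alpha^2)^k_x\big\|$ analogous to Eq.~\ref{eqn:operator_norm_k}. Using the same idempotency and covariance identities from Remarks~\ref{rmk:stableProjectionProperty} and~\ref{rmk:pvispsv}, I would rewrite the composition so that the stable projection $\stableProjection$ is inserted, giving a factor of $d\varphi^k\,\stableProjection$ acting on the stable subspace. The difference from the earlier lemma is the accumulated product $\prod_{j} \alpha_{\varphi^j x}^{-2}$. Since $\alpha_x = \|d\varphi_{\varphi^{-1}x}\, q_{\varphi^{-1}x}\|$ is the unstable expansion factor, uniform hyperbolicity gives $\alpha_x \geq C^{-1}\lambda^{-1} > 1$ (expansion on $E^u$), so each factor $\alpha^{-2} < 1$ is bounded strictly below~1. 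Combined with the stable-contraction bound $\|d\varphi_x^k\,\stableProjection_x\| \leq C\,\lambda^k\,S$, this yields a uniform bound of the form $\big\|(\unOrthProjection\, d\varphi/\alpha^2)^k_x\big\| \leq C\,(\lambda')^k\,S$ for some $\lambda' \in (0,1)$, which is all that is needed.

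With that operator bound in hand, the telescoping argument $w^{k+1}-w^k = (\unOrthProjection\, d\varphi/\alpha^2)^k(w^1-w^0)$, the geometric summation $\sum_k (\lambda')^k < \infty$, and the resulting uniform Cauchy estimate go through exactly as in Eqs.~\ref{eqn:vnminusvnminus1}--\ref{eqn:norm_diff_vmn}. The source term $\unOrthProjection\, d^2\varphi(q,q)/\alpha^2$ is bounded because $\varphi \in C^{\regularityClassOfdynamics}$ (so $d^2\varphi$ is continuous on the compact attractor), $q$ is a unit field, and $\alpha$ is bounded below; this guarantees $w^1$ is bounded and supplies the constant $A$. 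Uniqueness follows identically: any bounded difference $\Delta w$ satisfies $\Delta w = (\unOrthProjection\, d\varphi/\alpha^2)^k\,\Delta w$ for all $k$, and the $(\lambda')^k$ decay of the operator norm forces $\Delta w = 0$.

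**The hard part will be** verifying the uniform lower bound $\alpha_x > 1$ rigorously from the hyperbolicity constants and confirming that the $\alpha^{-2}$ factors genuinely reinforce (rather than merely not spoil) the contraction, so that the geometric ratio $\lambda'$ is well-defined and strictly less than one uniformly over $M$. Once that bound is secured, the rest is a routine replay of the contraction-mapping argument already carried out for $v$.
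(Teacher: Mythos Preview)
Your approach is essentially the same as the paper's, and the argument goes through. The one point you flag as ``the hard part'' is in fact a non-issue, but not for the reason you suggest: you do \emph{not} need a pointwise bound $\alpha_x > 1$. The claim $\alpha_x \geq C^{-1}\lambda^{-1} > 1$ is false in general, since the hyperbolicity constant $C$ may exceed $\lambda^{-1}$. What you actually need---and what the paper uses---is the \emph{product} bound $\prod_{k=0}^{n-1}\alpha_{\varphi^{-k}x} \geq C^{-1}\lambda^{-n}$, which follows directly from the unstable inequality~\eqref{eqn:unstable} applied to $d\varphi^{-n}_x q_x$. Squaring gives $\prod_{k=0}^{n-1}\alpha^{-2}_{\varphi^{-k}x} \leq C^2\lambda^{2n}$, and combining with $\|d\varphi^n\,\stableProjection\| \leq C\lambda^n S$ yields $\|w^{n+1}_x - w^n_x\| \leq C^3 S\,\lambda^{3n}\,\|w^1_{\varphi^{-n}x} - w^0_{\varphi^{-n}x}\|$, exactly as in the paper. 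With that correction, your sketch is the paper's proof.
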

Note that, since $w = \unOrthProjection w,$ $w$ is orthogonal to $q.$ 
\begin{proof}
Consider a sequence of vector fields $\left\{w^n\right\}$ that satisfies the recurrence relation
\begin{align}
        \label{eqn:recursionForw}
        w^{n+1} = \unOrthProjection \dfrac{d^2 \varphi (q, q) + d\varphi\: w^n}{\alpha^2}
    \end{align}
We will now show that $w^n$ is a uniformly Cauchy sequence and hence converges uniformly.
From the above recurrence relation,
\begin{align}
    w^{n+1} - w^n &= \dfrac{1}{\alpha^2}\:\unOrthProjection \:  d\varphi \: (w^n - w^{n-1}),
\end{align}
iterating which, gives, for all $n \in \mathbb{N}$
\begin{align}
    w^{n+1} - w^n &= \dfrac{1}{\prod_{k=0}^{n-1} \alpha^2_{\varphi^{-k} \cdot}}\:\left(\unOrthProjection \:  d\varphi\right)^n \: (w^{1} - w^{0}).
\end{align}
Then, using i) $(\unOrthProjection\: d\varphi)^n = \unOrthProjection \: d\varphi^n\: \stableProjection$, which is shown in Lemma \ref{lem:existenceAndUniquenessOfv}, and ii) $\|\unOrthProjection_x\| = 1,$ at all $x \in M,$
\begin{align}
     \|w^{n+1}_x - w^n_x\| &\leq \dfrac{1}{\prod_{k=0}^{n-1} \alpha^2_{\varphi^{-k} x}}\: \|d\varphi^n_{\varphi^{-n}x} \:\stableProjection_{\varphi^{-n} x} (w^1_{\varphi^{-n} x} - w^{0}_{\varphi^{-n} x})\|. 
\end{align}
Now using i)$\prod_{k=0}^{n-1} \alpha_{\varphi^{-k} x} \geq (1/C) \lambda^{-n}$, and ii) $\norm{d\varphi^n_x \stableProjection_x}\leq C \: \lambda^n\: S,$ for all $x \in M,$ both of which follow from the definition of uniform hyperbolicity,
\begin{align}
    \|w^{n+1}_x - w^n_x\| &\leq 
   C^3\: \lambda^{3n} \: S\: \|w^1_{\varphi^{-n} x} - w^{0}_{\varphi^{-n} x}\|.
\end{align}
Clearly, since the map from $w^0$ to $w^1$ is bounded, and by assumption $w^0$ is bounded, there exists a constant $A$ such that $A := \sup_{x\in M} \| w^1_x - w^0_x\|.$
Hence, for any $m > n \geq 0,$
\begin{align}
    \| w^m_x  - w^n_x \| \leq A \: S\: C^3 \sum_{k=n}^{m-1} \lambda^{3k} \leq A \: S\: C^3 \dfrac{\lambda^{3n}}{1-\lambda^3}. 
\end{align}
Since the above inequality holds for all $x,$ the sequence $w^n_x$ is uniformly Cauchy and hence converges uniformly. Let $w := \lim_{n\to\infty} w^n.$ To show that $w$ is unique, suppose $w^n$ and $\tilde{w}^n$ are two different sequences that both satisfy Eq. \ref{eqn:recursionForw}. Then, at every $x \in M,$ 
\begin{align}
     \|w^{n}_x - \tilde{w}^{n}_x\| &\leq \dfrac{1}{\alpha^2_{x}}\: \|d\varphi_{\varphi^{-1}x} \:\stableProjection_{\varphi^{-1} x}\: (w^{n-1}_{\varphi^{-1} x} - \tilde{w}^{n-1}_{\varphi^{-1} x})\| \\
     &\leq A' \lambda^{3n},
\end{align}
where the second inequality is obtained by recursively applying the first $n$ times and then applying the definition of uniform hyperbolicity as done previously. Taking the limit $n \to \infty$ on both sides, we obtain that at every $x \in M,$ $\|w_x - \tilde{w}_x\| = 0.$ Thus, $w$ is a unique vector field independent of $w^0$. 
\end{proof}
We note that the vector field $w$ that satisfies Eq. \ref{eqn:definitionOfw} is also $\partial_\xi q.$ This relationship can be derived by differentiating with respect to $\xi$ the following equation that expresses the definition of the unstable CLV, $q$:
\begin{align}
    \alpha_{\varphi x} \: q_{\varphi x} =  d\varphi_x \: q_x.
\end{align}
We derive this relationship in detail in section \ref{sec:formulaforw}. As described in section \ref{sec:unstableContribution}, the practical computation of the vector field $w$ involves repeated application of Eq. \ref{eqn:recursionForw}, by choosing, without loss of generality, $w^0_x = 0$, at all $x.$ Choosing an initial phase point $x$ sampled according to $\mu,$ one then obtains the values  $w^n_{\varphi^n x}$ along the orbit of $x.$ As $n$ increases, these values exponentially approach the true value of $w$ along that orbit, as the proof of Lemma \ref{lem:existenceAndUniquenessOfw} shows. The convergence of the numerical procedure to compute $w,$ which we have shown here, is needed to prove the convergence of the method (in the S3 algorithm, section \ref{sec:algorithm}) to compute the unstable derivative of $v.$ We focus on this convergence next, which ultimately establishes the differentiability of the S3 decomposition.

\subsection{Convergence of the unstable projections of the regularized tangent solutions}
 The S3 decomposition of the perturbation field $\chi$ into $a\: q$ and $\chi - aq$ is differentiable on the unstable manifolds, if $y := \partial_\xi v$ exists. To see why, we first note that the scalar field $a$ is the limit of the iterative projections of the sequence $v^n,$ which in practice is obtained by solving the regularized tangent equation (Eq. \ref{eqn:stableTangentEquation}). We formally establish the existence of $a$ by showing that the iterative procedure used to obtain it converges.

\begin{lemma}
\label{lem:convergenceOfa}
Let $\left\{a^n\right\}, \: n \in \mathbb{Z}^+,$ be a sequence of scalar fields determined by a sequence of vector fields, $\left\{v^n\right\},$ that satisfies i) $v^n_x \cdot q_x = 0,\;\;\forall \: x \in M,\; n \in \mathbb{Z}^+$, and,
\begin{align}
\label{eqn:alternativeStableTangentEquation}
    ii) \; v^{n+1} &= (d\varphi)\: v^n + \chi - a^{n+1} q, \;\; n\in \mathbb{Z}^+.
\end{align}
Then, $\left\{a^n\right\}$ converges uniformly. 
\end{lemma}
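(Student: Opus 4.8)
The plan is to reduce the uniform convergence of $\left\{a^n\right\}$ to that of $\left\{v^n\right\}$, which was already established in Lemma~\ref{lem:existenceAndUniquenessOfv}. First I would extract a closed form for $a^{n+1}$ from the two hypotheses. Taking the inner product of hypothesis (ii) with the unit field $q$ at the image point and using the orthogonality $v^{n+1}\cdot q = 0$ from (i) together with $\norm{q}=1$, every term involving $a^{n+1}$ collapses to $-a^{n+1}$, so that $a^{n+1} = q^T\big(d\varphi\, v^n + \chi\big)$, which reproduces Eq.~\ref{eqn:an}. I would also note that applying $\unOrthProjection$ to (ii) and using $\unOrthProjection q = 0$ and $\unOrthProjection v^{n+1} = v^{n+1}$ shows that $\left\{v^n\right\}$ obeys the regularized recursion Eq.~\ref{eqn:regularizedTangentEquation}; hence, assuming the initializing field $v^0$ is bounded (as it is in the algorithm, where $v^0 = 0$), Lemma~\ref{lem:existenceAndUniquenessOfv} applies and $\left\{v^n\right\}$ converges uniformly with geometric-rate increments $\norm{v^{k+1}_x - v^k_x} \le A\,\lambda^k$ for all $x \in M$.

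The heart of the argument is then a telescoping estimate. Because $a^{n+1}$ depends on $v^n$ only linearly, through the fixed vector field $q$ and the differential $d\varphi$, the successive differences are $a^{n+1} - a^n = q^T d\varphi\,(v^n - v^{n-1})$. Evaluating at $\varphi x$ and bounding pointwise with $\norm{q_{\varphi x}} = 1$ and the uniform bound $D := \sup_{x\in M}\norm{d\varphi_x}$ (finite since $\varphi$ is $C^{\regularityClassOfdynamics}$ on the compact attractor) gives $\abs{a^{n+1}_{\varphi x} - a^n_{\varphi x}} \le D\,\norm{v^n_x - v^{n-1}_x} \le D\,A\,\lambda^{n-1}$, uniformly in $x$.

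Summing the geometric series then yields the uniform Cauchy property: for any $m > n$ one has $\abs{a^m_x - a^n_x} \le \sum_{k=n}^{m-1} D\,A\,\lambda^{k-1} \le D\,A\,\lambda^{n-1}/(1-\lambda)$, a bound independent of $x$ that tends to $0$ as $n\to\infty$. Hence $\left\{a^n\right\}$ is uniformly Cauchy on $M$ and converges uniformly to a limit $a$. The only point requiring care --- rather than a genuine obstacle --- is the first step: recognizing that hypotheses (i) and (ii) together force $\left\{v^n\right\}$ to coincide with the regularized tangent sequence of Lemma~\ref{lem:existenceAndUniquenessOfv}, so that its geometric-rate increments may be imported; once that is in hand the remaining estimates are routine.
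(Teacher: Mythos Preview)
Your proposal is correct and follows essentially the same approach as the paper: both derive $a^{n+1} = q^T(d\varphi\,v^n + \chi)$ from hypotheses (i)--(ii), invoke the uniform convergence of $\{v^n\}$ from Lemma~\ref{lem:existenceAndUniquenessOfv}, and bound differences of $a^n$ by $\sup_x\norm{d\varphi_x}$ times differences of $v^n$. Your version is slightly more explicit in two respects---you verify that (i)--(ii) force $\{v^n\}$ to satisfy the regularized recursion of Lemma~\ref{lem:existenceAndUniquenessOfv}, and you track the geometric rate $\lambda^n$ rather than the bare uniform-Cauchy statement---but these are refinements, not a different route.
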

Note that i) and ii) above constitute the iterative procedure to compute the regularized tangent solution in the S3 algorithm (section \ref{sec:algorithm}). The sequence $\left\{v^n\right\}$ is identical to the sequence defined in Lemma \ref{lem:existenceAndUniquenessOfv}.
\begin{proof}
The scalar fields $a^{n+1}$ are the projections of $d\varphi\: v^n + \chi$ on $q$, 
\begin{align}
\label{eqn:orthogonalityToGivean}
    a^{n+1} = q^T (d\varphi\: v^n + \chi).
\end{align}
As shown in Lemma \ref{lem:existenceAndUniquenessOfv}, the sequence $v^n$ converges uniformly. Hence, also using the fact that $\varphi \in \mathcal{C}^2(M)$, for every $\epsilon > 0,$ there exists an $N \in \mathbb{N}$ such that for all $m,n \ge N,$ 
\begin{align}
    \|v^m_x - v^n_x \| < \dfrac{\epsilon}{\sup_{x\in M} \|d\varphi_x\|}, \;\;\forall\; x\in M.
\end{align}
Hence, for all $m, n \geq N,$ and for all $x \in M,$
\begin{align}
\notag
    |a^{m+1}_x - a^{n+1}_x| &\leq \|q_x\| \|d\varphi_x\| \|v^m_x - v^n_x\| \\
    &\leq \left(\sup_{x\in M} \|d\varphi_x\|\right) \|v^m_x - v^n_x\| \leq \epsilon.
\end{align}
Thus, $\left\{a^n\right\}$ converges uniformly.
\end{proof}
We define the limit of the sequence $\left\{a^n\right\}$ by taking the limit as $n\to\infty$ of Eq. \ref{eqn:orthogonalityToGivean}, 
\begin{align}
    a := \lim_{n\to\infty} a^n = q^T(d\varphi\: v + \chi),
\end{align}
where $v := \lim_{n\to\infty}v^n$ is as defined in Lemma \ref{lem:existenceAndUniquenessOfv}. In other words, we have the following relationship between $a$ and $v$,
\begin{align}
\label{eqn:avrelation}
    \chi - a q = v - d\varphi \: v.
\end{align}
As we have shown in Lemma \ref{lem:existenceAndUniquenessOfv}, the limit $v$ is a unique vector field, independent of the initial condition for the iteration (Eq. \ref{eqn:stableTangentEquation} or equivalently, Eq. \ref{eqn:alternativeStableTangentEquation}) provided that $v^0$ is bounded. 

\subsection{Route to showing existence and differentiability of the S3 decomposition}
\label{sec:route}
So far, we have shown that a bounded vector field $v$ exists that is orthogonal to the unstable manifold, and the asymptotic solution of a regularized tangent equation (Eq. \ref{eqn:stableTangentEquation}). We showed the existence of the stable contribution by proving that the scalar field $a$, which represents the component in the unstable direction in decomposing $\chi$, is related to $v$ (Eq. \ref{eqn:avrelation}). Further, we established that $a$ is achieved in the same iterative procedure used for $v.$ 

In order to complete the proof of Theorem \ref{thm:thmS3}, we must show that the S3 decomposition of $\chi$ is differentiable on the unstable manifold. That is, we must show that the scalar field $a$ is differentiable on the unstable manifold. Lemma \ref{lem:existenceAndUniquenessOfw} is a result toward this purpose. Using Eq. \ref{eqn:avrelation}, we can see that if $v,$ and hence $d\varphi \: v$ are differentiable with respect to $\xi$, and since $\chi$ is differentiable, by assumption, in all directions, we can then conclude that $aq$ is differentiable in the unstable direction by Eq. \ref{eqn:avrelation}.

Further, by Lemma \ref{lem:existenceAndUniquenessOfw}, the derivative $w := \partial_\xi q$ exists. Thus, the scalar field $a$ is differentiable on the unstable manifold, if $v$ is. 
Therefore, it remains to show the existence of $\partial_\xi  v,$ in order to complete the proof of Theorem \ref{thm:thmS3}-1 and 2. 

\subsection{Differentiability of $v$ in the unstable direction}
\begin{proposition}
\label{prop:dvdxi}
The regularized perturbation field $v$ is differentiable on the unstable manifold, i.e., $\partial_\xi v$ exists.
\end{proposition}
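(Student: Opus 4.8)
The plan is to differentiate the defining recursion of $\{v^n\}$ along the unstable manifold and show that the sequence of unstable derivatives $y^n := \partial_\xi v^n$ converges uniformly; the differentiability of $v$ then follows from the elementary fact that if a sequence of functions converges pointwise and their derivatives converge uniformly, the limit is differentiable with derivative equal to the limit of the derivatives. I would apply this fact along each one-dimensional local unstable manifold, using the unit-speed parameterization $\Phi^x$. First I must check that each $v^n$ is itself differentiable on the unstable manifold. This follows by induction: $v^0$ is differentiable by hypothesis; $\chi$ is differentiable in all directions and $\varphi \in C^{\regularityClassOfdynamics}$, so $d\varphi$ is $C^2$; and $q$ is differentiable along the unstable manifold with $\partial_\xi q = w$ by Lemma \ref{lem:existenceAndUniquenessOfw}. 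Hence each step $v^{n+1} = \unOrthProjection(d\varphi\, v^n + \chi)$, in which $\unOrthProjection = I - qq^T$, preserves differentiability in the unstable direction.

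Differentiating the regularized tangent equation (Eq. \ref{eqn:stableTangentEquation}) in the unstable direction --- using the chain rule Eq. \ref{eqn:chainRuleForPhix}, $\partial_\xi q = w$, and $\partial_\xi \alpha = \gamma$ --- produces a second-order recursion of the form Eq. \ref{eqn:secondOrderStableTangentEquation} for the iterates, in which the unstable derivative $b^{n+1} := \partial_\xi a^{n+1}$ enters through a term proportional to $-\,\alpha\, b^{n+1}\, q$. Herein lies the main obstacle: the recursion for $y$ is coupled to the \emph{a priori} unknown unstable derivative $b$ of the scalar field $a$, so I cannot solve for $\partial_\xi v$ in isolation. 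I would break this coupling by applying $\unOrthProjection$ to both sides: since $\unOrthProjection q = 0$ the offending $b^{n+1}q$ term is annihilated, and since $w\cdot q = 0$ (Lemma \ref{lem:existenceAndUniquenessOfw}) we have $\unOrthProjection w = w$, leaving a recursion for $\unOrthProjection y^{n+1}$ that is closed in terms of $y^n$, $v^n$, $q$, $w$, and $a^{n+1}$ alone. The component of $y^{n+1}$ along $q$ is not free but is pinned by the constraint $y^{n+1}\cdot q = -\,v^{n+1}\cdot w$ (Eq. \ref{eqn:constraintSimple}), obtained by differentiating the orthogonality $v^{n+1}\cdot q = 0$; its right-hand side converges because $v^{n+1}\to v$ uniformly (Lemma \ref{lem:existenceAndUniquenessOfv}) and $w$ is fixed. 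It therefore suffices to prove that $\{\unOrthProjection y^n\}$ converges uniformly.

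To that end I would mimic the contraction argument of Lemma \ref{lem:existenceAndUniquenessOfw}. Decomposing $y^n = \unOrthProjection y^n + (y^n\cdot q)\,q$ and substituting into the projected recursion, the propagator acting on $\unOrthProjection y^n$ is $\alpha^{-1}\unOrthProjection\, d\varphi\,\unOrthProjection$, while the $q$-component of $y^n$, the curvature term $(d^2\varphi)(q, v^n)$, the source $(d\chi)\,q$, and $a^{n+1}w$ all act as forcing. Iterating the propagator over any range of length $\ell$ and invoking $(\unOrthProjection\, d\varphi)^\ell = \unOrthProjection\, d\varphi^\ell \stableProjection$ (from the proof of Lemma \ref{lem:existenceAndUniquenessOfv}) together with $\prod_k \alpha_{\varphi^{-k}\cdot} \geq (1/C)\lambda^{-\ell}$ and $\norm{d\varphi^\ell \stableProjection} \leq C\lambda^\ell S$ gives the operator bound $C^2 \lambda^{2\ell} S$; the single inverse power of $\alpha$ is enough to dominate the expansion of $d\varphi$ restricted to $E^s$. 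The forcing increments are all $O(\lambda^n)$: $\norm{v^n - v^{n-1}}$ and $|a^{n+1}-a^n|$ by Lemmas \ref{lem:existenceAndUniquenessOfv} and \ref{lem:convergenceOfa}, and $|(y^n-y^{n-1})\cdot q| = |(v^n-v^{n-1})\cdot w|$ by the constraint. A discrete variation-of-constants estimate --- expressing $\unOrthProjection y^{n+1} - \unOrthProjection y^n$ through the composed propagator applied to $\unOrthProjection y^1 - \unOrthProjection y^0$ plus the accumulated forcing increments, and summing the resulting geometric series exactly as in Eq. \ref{eqn:norm_diff_vmn} --- then shows $\norm{\unOrthProjection y^{n+1} - \unOrthProjection y^n} = O(\lambda^n)$, so $\{\unOrthProjection y^n\}$ is uniformly Cauchy. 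Combined with the convergence of the $q$-component, $y^n$ converges uniformly to a bounded limit $y$, and the differentiability criterion above gives $\partial_\xi v = y$, completing the proof.
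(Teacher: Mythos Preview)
Your argument is correct and rests on the same contraction mechanism as the paper's proof: the propagator $\alpha^{-1}\unOrthProjection\,d\varphi$ iterated $\ell$ times has norm $\leq C^2\lambda^{2\ell}S$, and the forcing terms inherit exponential decay from $\{v^n\}$. The packaging differs, however. You differentiate the unprojected form $v^{n+1}=d\varphi\,v^n+\chi-a^{n+1}q$, which forces the unknown $b^{n+1}$ into the recursion and obliges you to project it out and then recover the $q$-component separately via the constraint $y^{n+1}\cdot q=-v^{n+1}\cdot w$. The paper instead differentiates the projected form $v^{n+1}=\unOrthProjection(d\varphi\,v^n+\chi)$ directly, treating $\partial_\xi\unOrthProjection=-(wq^T+qw^T)$ as a known bounded operator; this yields in one stroke a recursion $y^{n+1}=\alpha^{-1}\unOrthProjection\,d\varphi\,y^n+\zeta^{n+1}$ with $\zeta^{n+1}$ depending only on $v^n$, and the constraint on $y^{n+1}\cdot q$ is automatically encoded in the $qw^T$ part of $\partial_\xi\unOrthProjection$. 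The paper then invokes a general lemma (Lemma~\ref{lem:existenceAndUniquenessOfy}) covering any recursion of this form with exponentially converging forcing. Your route is equivalent but slightly more laborious; the paper's buys a cleaner separation between the abstract contraction lemma and its application, and avoids the apparent circularity of $b^{n+1}$ appearing before its existence is established.
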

We prove this proposition using the following lemma.
\begin{lemma}
\label{lem:existenceAndUniquenessOfy}
Let $\left\{ \zeta^n \right\}$ be a uniformly exponentially converging sequence of vector fields. That is, $\left\{\zeta^n\right\}$ is uniformly converging such that there exists an $A > 0$ for which
$\| \zeta^{n+1}_x - \zeta^n_x \| \leq A \: \lambda^n,$ for all $x \in M,$
and $n \in \mathbb{Z}^+.$ Given a bounded vector field $y^0$, the sequence $\left\{y^n\right\}$ that satisfies the following recurrence,
\begin{align}
    \label{eqn:recurrenceFory}
    y^{n+1} = \dfrac{1}{\alpha}\unOrthProjection\: d\varphi\: y^n + \zeta^{n+1},\;\;\forall\;\; n\in \mathbb{Z}^+,
\end{align}
converges uniformly to a unique vector field, $y := \lim_{n\to\infty} y^n$.
\end{lemma}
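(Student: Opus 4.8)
The plan is to run the same uniform-Cauchy argument used in Lemmas~\ref{lem:existenceAndUniquenessOfv} and~\ref{lem:existenceAndUniquenessOfw}, the only genuinely new feature being the time-varying inhomogeneity $\zeta^{n+1}$. Write $L := (1/\alpha)\,\unOrthProjection\, d\varphi$ for the linear operator driving the recurrence Eq.~\ref{eqn:recurrenceFory}. The first step is a contraction estimate on its iterates. Exactly as before, $(\unOrthProjection\, d\varphi)^n = \unOrthProjection\, d\varphi^n\, \stableProjection$ (Lemma~\ref{lem:existenceAndUniquenessOfv}); combining this with the expansion-product bound $\prod_{k=0}^{n-1}\alpha_{\varphi^{-k}x}\ge (1/C)\lambda^{-n}$ and $\norm{d\varphi^n_x\,\stableProjection_x}\le C\lambda^n S$ from Lemma~\ref{lem:existenceAndUniquenessOfw}, together with $\norm{\unOrthProjection_x}=1$, I would obtain the uniform bound $\norm{(L^n v)_x}\le C^2 S\,\lambda^{2n}\,\norm{v_{\varphi^{-n}x}}$. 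Thus $L$ contracts at rate $\lambda^{2n}$, strictly faster than the rate $\lambda^n$ at which the source differences decay.

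Next I would pass to differences. Setting $\delta^n := y^{n+1}-y^n$ and $\eta^n := \zeta^{n+1}-\zeta^n$, the recurrence gives $\delta^n = L\,\delta^{n-1} + \eta^n$, which unrolls to $\delta^n = L^n\delta^0 + \sum_{j=1}^n L^{n-j}\eta^j$. Since $y^0$, $\chi$, $d\varphi$ and $1/\alpha$ are bounded and the $\zeta^n$ are uniformly bounded (being uniformly convergent), $\delta^0$ is bounded by some $D_0$; and by hypothesis $\norm{\eta^j_x}\le A\lambda^j$ uniformly. Feeding the contraction bound into the unrolled identity yields, at every $x$,
\begin{equation*}
\norm{\delta^n_x} \le C^2 S D_0\,\lambda^{2n} + C^2 S A\sum_{j=1}^n \lambda^{2(n-j)+j} = C^2 S D_0\,\lambda^{2n} + C^2 S A\,\lambda^{2n}\sum_{j=1}^n\lambda^{-j}.
\end{equation*}
The key computation is the geometric sum $\lambda^{2n}\sum_{j=1}^n\lambda^{-j}\le \lambda^n/(1-\lambda)$, so the convolution of the operator's decay $\lambda^2$ against the source's decay $\lambda$ still produces overall exponential decay: $\norm{\delta^n_x}\le E\lambda^n$ for a constant $E$ uniform in $x$.

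From $\norm{\delta^n_x}\le E\lambda^n$ a standard geometric-series estimate gives $\norm{y^m_x - y^n_x}\le E\lambda^n/(1-\lambda)$ for $m>n$, so $\{y^n\}$ is uniformly Cauchy and converges uniformly to a limit $y$. For uniqueness I would take two sequences $\{y^n\}$ and $\{\tilde y^n\}$ solving Eq.~\ref{eqn:recurrenceFory} with the same inhomogeneity $\{\zeta^n\}$; their difference $\Delta^n := y^n-\tilde y^n$ satisfies the homogeneous recurrence $\Delta^{n+1}=L\Delta^n$, whence $\norm{\Delta^n_x}\le C^2 S\,\lambda^{2n}\,\norm{\Delta^0_{\varphi^{-n}x}}\to 0$. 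Hence the two limits coincide, so $y$ is independent of the bounded initialization $y^0$.

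The main obstacle is the inhomogeneous term. In Lemmas~\ref{lem:existenceAndUniquenessOfv} and~\ref{lem:existenceAndUniquenessOfw} the source was fixed, so differencing annihilated it outright and the decay of $\delta^n$ was immediate. Here the source changes at each step, so the argument must instead control the accumulated contribution $\sum_{j=1}^n L^{n-j}\eta^j$. The estimate closes precisely because $L$ contracts strictly faster ($\lambda^{2n}$) than the source differences decay ($\lambda^n$): the convolution sum is dominated by its terms near $j=n$ and inherits the slower rate $\lambda^n$. Were $\{\zeta^n\}$ merely convergent without an exponential rate, this step would only deliver convergence of $\{y^n\}$ rather than the exponential convergence that the downstream arguments (Lemma~\ref{lem:convergenceOfb}) rely upon.
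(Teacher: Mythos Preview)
Your proposal is correct and follows essentially the same approach as the paper's proof: both unroll the recurrence for the difference $y^{n+1}-y^n$, use the identity $(\unOrthProjection\,d\varphi)^n = \unOrthProjection\,d\varphi^n\,\stableProjection$ together with the hyperbolicity bounds to obtain $\lambda^{2n}$-contraction of the iterated operator, and then sum the convolution of this decay against the $\lambda^n$-decay of the source differences to conclude $\norm{y^{n+1}_x - y^n_x}\le E\lambda^n$ uniformly. The uniqueness argument via the homogeneous recurrence is likewise identical; the only cosmetic difference is your introduction of the shorthand $L$, $\delta^n$, $\eta^n$ and your explicit commentary on why the exponential rate hypothesis on $\{\zeta^n\}$ is needed (a point the paper leaves implicit).
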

\begin{proof}
Using the recurrence relation, for all $n \in \mathbb{N},$
\begin{align}
    y^{n+1} - y^n = \dfrac{1}{\alpha}\unOrthProjection\: d\varphi\: (y^n - y^{n-1})  + (\zeta^{n+1} - \zeta^{n}).
\end{align}
Applying this equation iteratively,
\begin{align}
    y^{n+1} - y^n &= \dfrac{1}{\prod_{k=0}^{n-1} \alpha\circ\varphi^{-k}} (\unOrthProjection \: d\varphi)^n (y^1 - y^0) \\
    &+ 
    \sum_{k=0}^{n-1} \dfrac{1}{\prod_{j=0}^{k-1} \alpha\circ\varphi^{-j}}(\unOrthProjection \: d\varphi)^k (\zeta^{n-k+1} - \zeta^{n-k}).
\end{align}
Using i) the relation $(\unOrthProjection \: d\varphi)^n = \unOrthProjection \: d\varphi^n \: \stableProjection$ -- which is derived in Lemma \ref{lem:existenceAndUniquenessOfv} using Remarks \ref{rmk:stableProjectionProperty} and \ref{rmk:pvispsv} --, ii) $\|\unOrthProjection_x\| = 1$, and iii) $\prod_{k=0}^{n-1} \alpha_{\varphi^{-k} x} \geq (1/C) \lambda^{-n}$ at any $x \in M,$
\begin{align}
\notag 
    \|y^{n+1}_x - y^n_x\| &\leq  C^2 \:\lambda^{2n}\: S\: \| y^1_{\varphi^{-n} x} - y^0_{\varphi^{-n}x}\| \\
    \label{eqn:ynminuxynminus1}
    &+ 
    C^2 \: S\: \sum_{k=0}^{n-1} \lambda^{2k} \|\zeta^{n-k+1}_{\varphi^{-k}x} - \zeta^{n-k}_{\varphi^{-k} x}\|.
\end{align}
Now since $\left\{\zeta^n\right\}$ is uniformly exponentially converging,
\begin{align}
\label{eqn:exponentialConvergenceofzeta}
    \|\zeta^{n-k+1}_{\varphi^{-k}x} - \zeta^{n-k}_{\varphi^{-k} x}\|
    \leq A \: \lambda^{n-k}.   
\end{align}
Further, since $y^0,$ $\zeta^1$ are bounded vector fields, and $\unOrthProjection \: d\varphi$ is bounded, $y^1$ is a bounded vector field. 
Hence, there exists some constant $A_1$ such that for any $x \in M,$
\begin{align}
\label{eqn:boundednessofy1}
    \|y^1_{\varphi^{-n} x} - y^0_{\varphi^{-n}x}\| < A_1.
\end{align}
Using both the above relationships ( \cref{eqn:exponentialConvergenceofzeta} and \cref{eqn:boundednessofy1}) in \cref{eqn:ynminuxynminus1},
\begin{align}
\notag 
    \|y^{n+1}_x - y^n_x\| &\leq  C^2\: \lambda^{2n}\: S\: A_1 + 
     A\: C^2\: \: S\: \lambda^{n}/(1-\lambda) \leq  A_2 \lambda^n.
\end{align}
Hence, for $m \geq n > 0,$ and all $x\in M,$
\begin{align}
    \|y^{m}_x - y^n_x\| &\leq  \sum_{k=n}^{m-1} A_2 \lambda^k < A_2  \dfrac{\lambda^n}{1-\lambda}.
\end{align}
Thus, $\left\{ y^n\right\}$ is uniformly Cauchy and converges uniformly. 

To see that the limit $y := \lim_{n\to\infty} y^n$ is unique, let $y^n$ and $\tilde{y}^n$ be two different bounded sequences that satisfy Eq. \ref{eqn:recurrenceFory}, and converge to $y$ and $\tilde{y}$ respectively. Then,
\begin{align}
    y^{n} - \tilde{y}^{n} = \dfrac{1}{\alpha}\unOrthProjection \: d\varphi\: (y^{n-1} - \tilde{y}^{n-1}),
\end{align}
which can be applied recursively to yield,
\begin{align}
    y^{n} - \tilde{y}^{n} = \dfrac{1}{\prod_{k=0}^{n-1} \alpha\circ\varphi^{-k}} (\unOrthProjection \: d\varphi\:)^n (y^0 - \tilde{y}^0) = \dfrac{1}{\prod_{k=0}^{n-1} \alpha\circ\varphi^{-k}} \unOrthProjection\: d\varphi^n \: \stableProjection (y^0 - \tilde{y}^0).
\end{align}
Since both $y^0$ and $\tilde{y}^0$ are bounded, the operator norm of $\stableProjection_x$ is uniformly bounded above, and from the definition of uniform hyperbolicity, there exists some $c > 0$ such that for all $n,$ and all $x \in M$, we have 
\begin{align}
     \|\tilde{y}^{n}_x - y^{n}_x\| \leq c\: \lambda^{2n} .
\end{align}
Thus, $\lim_{n\to\infty} (\tilde{y}^n_x - y^n_x) = 0 \in \mathbb{R}^m$ for every $x \in M.$ Hence, the limit $y$ is unique.
\qed
\end{proof}

Now we prove Proposition \ref{prop:dvdxi}. We set 
\begin{align}
    \label{eqn:zeta}
    \zeta^{n+1} = \unOrthProjection \big(\dfrac{d^2\varphi(q, v^n)}{\alpha} + d\chi\: q \big) + (\partial_\xi \unOrthProjection)(d\varphi\: v^n + \chi),
\end{align}
where the sequence $\left\{v^n\right\}$ is as defined in Lemma \ref{lem:existenceAndUniquenessOfv}; $d^2\varphi_x(\cdot, \cdot):T_xM\times T_xM \to T_{\varphi x}M$ is a bilinear form representing the second-order derivative of $\varphi$ at $x \in M$. In order to apply Lemma \ref{lem:existenceAndUniquenessOfy}, we must show that $\left\{\zeta^n\right\}$ is a bounded, uniformly exponentially converging sequence. First we note that the projection operator $\unOrthProjection$ is differentiable in the unstable direction. Using the matrix representation of $\unOrthProjection,$ and Lemma \ref{lem:existenceAndUniquenessOfw}, its derivative is given by 
\begin{align}
    \partial_{\xi}\unOrthProjection = - \partial_\xi (qq^T) = 
    - (w q^T + q w^T),
\end{align}
from which we see that the operator norm of $\partial_\xi \unOrthProjection$ is bounded on $M$. Because $d\varphi \in C^2(M)$ by assumption, $\|d\varphi_x\|$ and $\|d^2\varphi_x\|$ are also bounded on $M$; $\chi$ and $d\chi$ are bounded by assumption. Now, the sequence $\left\{v^n\right\}$ is uniformly bounded on $M$ and $\mathbb{Z}^+$ ($\because \: \left\{v^n\right\}$ is bounded and uniformly convergent, by Lemma \ref{lem:existenceAndUniquenessOfv}). Thus,  $\left\{\zeta^n\right\}$ defined in Eq. \ref{eqn:zeta} is a uniformly bounded sequence.

Now to show that $\left\{\zeta^n\right\}$ is uniformly 
exponentially converging, we use its definition, 
Eq. \ref{eqn:zeta}, to get
\begin{align}
    \zeta^{n+1} - \zeta^{n} =  \partial_\xi \unOrthProjection\: d\varphi\: (v^{n+1} - v^{n}) + \dfrac{\unOrthProjection}{\alpha}\: d^2\varphi\left( q, (v^{n+1} - v^{n})\right).
\end{align}
Since $\partial_\xi \unOrthProjection$, $d\varphi$ and $d^2\varphi$ are bounded and $\|\unOrthProjection\| = 1$, and using Eq. \ref{eqn:vnminusvnminus1}, there exists some $c > 0$ such that, for all $x \in M,$
\begin{align}
    \|\zeta^{n+1}_x - \zeta^{n}_x\| \leq  \left(\|(\partial_\xi \unOrthProjection)_x\|\: \norm{d\varphi_{\varphi^{-1}x}} + \frac{\|d^2\varphi_x(q_x,\cdot)\|}{\alpha}\right)\: A\: \lambda^n := c\: \lambda^n.
\end{align}
Thus, using the same argument following Eq. \ref{eqn:vnminusvnminus1} in Lemma \ref{lem:existenceAndUniquenessOfv}, we can show that $\left\{\zeta^n\right\}$ converges uniformly. Further, the limit $\zeta := \lim_{n\to \infty} \zeta^n$ is unique, and defined by taking the limit of Eq. \ref{eqn:zeta},
\begin{align}
\label{eqn:zetainf}
    \zeta =  \unOrthProjection \big(\dfrac{d^2\varphi(q, v)}{\alpha} + d\chi\cdot q \big) + (\partial_\xi \unOrthProjection)(d\varphi\: v + \chi),
\end{align}
Therefore, $\left\{\zeta^n\right\}$, as defined in Eq. \ref{eqn:zeta}, is a uniformly bounded, uniformly exponentially converging sequence of vector fields. Now applying Lemma \ref{lem:existenceAndUniquenessOfy}, there exists a unique, bounded vector field $y$ that satisfies
\begin{align}
    \label{eqn:stable_tangent_derivative}
    y = \dfrac{\unOrthProjection\: d\varphi\: y}{\alpha} + \zeta.
\end{align}
Differentiating with respect to $\xi$ the constraint, Eq. \ref{eqn:stable_tangent}, which is satisfied by the regularized perturbation field $v$, we see that if the derivative $\partial_\xi v$ exists, it must satisfy Eq. \ref{eqn:stable_tangent_derivative} (replacing $y$), with $\zeta$ defined in Eq. \ref{eqn:zetainf}. Since a vector field $y$ that satisfies Eq. \ref{eqn:stable_tangent_derivative} exists and is unique (Lemma \ref{lem:existenceAndUniquenessOfy}), the regularized perturbation field $v$ is indeed differentiable.  This completes the proof of \cref{prop:dvdxi}. Further, the computation of its derivative, using the sequence of vector fields $\left\{y^n\right\}$ used in the proof of Lemma \ref{lem:existenceAndUniquenessOfy}, converges exponentially along almost every trajectory. Note that the evaluation of $y$ along a trajectory, with $\zeta$ defined in Eq. \ref{eqn:zetainf}, is precisely that derived in section \ref{sec:formulaforb}.  

Recall the argument (section \ref{sec:route}) that the differentiability (with respect to $\xi$) of $v$ implies that of $a.$ This completes the proof of Theorem \ref{thm:thmS3}-2., and hence the first two parts of Theorem \ref{thm:thmS3}. To complete the proof of Theorem \ref{thm:thmS3}-3, we provide a constructive proof of the unstable derivative $b = \partial_\xi a.$ In a similar vein to Lemmas \ref{lem:existenceAndUniquenessOfv}, \ref{lem:existenceAndUniquenessOfw} and \ref{lem:existenceAndUniquenessOfy}, this proof shows that a trajectory-based computation adopted in S3 converges exponentially, in this case, to the true values of $b$. 
\subsection{Convergence of the derivative of $\left\{a^n\right\}$}
We now show that the iterative algorithm for the scalar derivative $b^n := \partial_\xi a^n$ converges uniformly. Expanding Eq. \ref{eqn:zeta} to make the appearance of $b^n := \partial_{\xi} a^n$ explicit, by using the definition of $a^n$ (Lemma \ref{lem:convergenceOfa}),
\begin{align}
\notag
    y^{n+1} 
    &= \dfrac{d\varphi\: y^n}{\alpha} + \dfrac{d^2\varphi(v^n, q)}{\alpha}\: + d\chi\: q - a^{n+1}w - 
    b^{n+1}q.
\end{align}
Taking inner product with $q$ and using $w_x \cdot q_x = 0$ at all $x\in M$ (Lemma \ref{lem:existenceAndUniquenessOfw}),
\begin{align}
\label{eqn:bn}
    b^{n+1} &= \left(\dfrac{d\varphi\: y^n}{\alpha} - y^{n+1} + \dfrac{d^2\varphi(v^n,q)}{\alpha} + d\chi \: q\right)\cdot q.
\end{align}
\begin{lemma}
\label{lem:convergenceOfb}
The sequence $\left\{ b^n\right\}$ defined in Eq. \ref{eqn:bn} converges uniformly
where i) the sequence $\left\{y^n\right\}$ satisfies Eq. \ref{eqn:recurrenceFory}, and, ii) $\left\{v^n\right\}$ satisfies Eq. \ref{eqn:regularizedTangentEquation}.
\end{lemma}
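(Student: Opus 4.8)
The plan is to read off the uniform convergence of $\left\{b^n\right\}$ directly from the formula \eqref{eqn:bn}, exploiting that $b^{n+1}$ depends on the index $n$ only through the vector fields $y^n$, $y^{n+1}$ and $v^n$, each of which converges uniformly and, in fact, exponentially. Concretely, I would estimate the increments $b^{n+1}-b^n$ and show that they are bounded, uniformly in $x \in M$, by a constant times $\lambda^n$. A telescoping/geometric-series argument identical to the one used at the end of Lemma \ref{lem:existenceAndUniquenessOfv} then shows that $\left\{b^n\right\}$ is uniformly Cauchy, hence uniformly convergent.

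First I would form the difference of consecutive terms in \eqref{eqn:bn}. Since the term $(d\chi\,q)\cdot q$ is independent of $n$, it cancels, leaving
\begin{align*}
    b^{n+1} - b^n = \Big(\tfrac{1}{\alpha}\,d\varphi\,(y^n - y^{n-1})\Big)\cdot q \;-\; (y^{n+1}-y^n)\cdot q \;+\; \Big(\tfrac{1}{\alpha}\,d^2\varphi(v^n - v^{n-1}, q)\Big)\cdot q.
\end{align*}
Each of the three surviving terms is a bounded, (bi)linear expression evaluated on an increment of a sequence already known to converge exponentially.

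Next I would bound each term uniformly. The expansion factor $\alpha$ is bounded below on $M$ (consistent with $\prod_{k=0}^{n-1}\alpha_{\varphi^{-k}x}\ge (1/C)\lambda^{-n}$), while $\norm{d\varphi_x}$ and $\norm{d^2\varphi_x}$ are bounded above since $\varphi \in C^{\regularityClassOfdynamics}(M)$, and $\norm{q_x}=1$. Using the exponential increment bound $\norm{v^{n}_x - v^{n-1}_x}\le A\,\lambda^{n-1}$ from Lemma \ref{lem:existenceAndUniquenessOfv}, together with $\norm{y^{n+1}_x - y^n_x}\le A_2\,\lambda^n$ established inside the proof of Lemma \ref{lem:existenceAndUniquenessOfy} — valid here because the driving sequence $\left\{\zeta^n\right\}$ of \eqref{eqn:recurrenceFory} converges exponentially, as shown in the proof of Proposition \ref{prop:dvdxi} — I obtain a uniform estimate $\abs{b^{n+1}_x - b^n_x} \le c\,\lambda^n$ for some $c>0$ and all $x \in M$. (The increments appearing on the right are evaluated at $x$ or $\varphi^{-1}x$, but the exponential bounds hold uniformly over the base point, so the estimate is unaffected.)

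Finally, summing the geometric series gives $\abs{b^m_x - b^n_x}\le c\,\lambda^n/(1-\lambda)$ for all $m>n$ and all $x$, so $\left\{b^n\right\}$ is uniformly Cauchy and converges uniformly; its limit is the unstable derivative $b=\partial_\xi a$. The main obstacle is securing \emph{exponential} rather than merely uniform convergence of $\left\{y^n\right\}$: Lemma \ref{lem:existenceAndUniquenessOfy} is phrased as a uniform-convergence result, so I must invoke the exponential increment estimate from within its proof, which itself rests on the exponential convergence of $\left\{\zeta^n\right\}$ recorded in Proposition \ref{prop:dvdxi}. Once that rate is in hand, the remaining estimates are routine bookkeeping of bounded operators.
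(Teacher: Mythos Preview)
Your proposal is correct and follows essentially the same approach as the paper: the paper introduces the auxiliary vector field $\tau^{n+1} = \tfrac{d\varphi\, y^n}{\alpha} - y^{n+1} + \tfrac{d^2\varphi(v^n,q)}{\alpha} + d\chi\,q$ and shows $\{\tau^n\}$ is uniformly Cauchy via the uniform (exponential) convergence of $\{y^n\}$ and $\{v^n\}$, then concludes for $b^n = \tau^n\cdot q$. Your argument bounds the consecutive increments $b^{n+1}-b^n$ directly by the same exponential estimates and telescopes, which is a minor repackaging of the same proof.
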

\begin{proof}
We prove that the sequence of vector fields $\left\{\tau^n\right\}$ given by
\begin{align}
    \tau^{n+1} = \dfrac{d\varphi\: y^n}{\alpha} - y^{n+1} + \dfrac{d^2\varphi(v^n,q)}{\alpha} + d\chi \: q.
\end{align}
converges uniformly. From the above definition, and using the linearity of $d^2\varphi(\cdot,q)$, at any $x \in M,$ and all $n> m\ge 0,$
\begin{align}
\label{eqn:taun}
    \|\tau^{n+1}_x - \tau^{m+1}_x\| &\leq \dfrac{\|d\varphi_x\|}{\alpha_x}\: \|y^n_x - y^m_x\| + \|y^{n+1}_x-y^{m+1}_x\| + \dfrac{\|d^2\varphi_x(v^n_x - v^{m}_x, q_x)\|}{\alpha_x}.
\end{align}
Recall that $\varphi \in C^3(M)$ and hence $\|d\varphi_x\|$ and $\|d^2\varphi_x(\cdot, \cdot)\|$ are both uniformly bounded; from uniform hyperbolicity, $1/\alpha_x \leq C\lambda,$ at all $x \in M$. Let $\epsilon > 0$ be given. Using Lemma \ref{lem:existenceAndUniquenessOfy}, there exists an $N_y \in \mathbb{N}$ such that for all $x \in M$ and $m,n \geq N_y,$ $\|y^n_x - y^m_x\| < \epsilon/(3\: C \: \lambda\: \sup_{x\in M} \|d\varphi_x\|).$ Similarly, using Lemma \ref{lem:existenceAndUniquenessOfv}, 
there exists an $N_y \in \mathbb{N}$ such that for all $x \in M$ and $m,n \geq N_v,$ $\|v^n_x - v^m_x\| < \epsilon/(3\: C\: \lambda\: \sup_{x\in M} \|d^2\varphi_x(q,\cdot)\|).$ Choosing $N = \max\{N_v, N_y\}$, for all $m,n \geq N,$ and all $x \in M,$ $
    \|\tau^{n}_x - \tau^{m}_x\| \leq \epsilon.$
Thus, $\left\{\tau^n\right\}$ converges uniformly. Since at all $x \in M,$ $b^n_x
= \tau^n_x \cdot q_x$, the sequence $\left\{b^n\right\}$ also converges uniformly. Moreover, note from \cref{eqn:taun} that the speed of convergence is exponential, since both $\left\{v^n\right\}$ and $\left\{y^n\right\}$ are exponentially uniformly converging as shown in Lemmas \ref{lem:existenceAndUniquenessOfv} and \ref{lem:existenceAndUniquenessOfy} respectively. The limit is unique since,
\begin{align}
\label{eqn:b}
    b &:= \lim_{n\to\infty} b^n = (\lim_{n\to\infty}\tau^n)\cdot q
    = \left(d\chi \:q - y + \dfrac{d\varphi\: y + d^2\varphi(v,q)}{\alpha}\right) \cdot q.
\end{align}

\end{proof}
\section{Proof of Theorem \ref{thm:thmS3Computation}}
\label{sec:proofOfTheorem2}
In this section, we complete the proof of Theorem \ref{thm:thmS3Computation}, which 
establishes the convergence of the S3 algorithm (section \ref{sec:algorithm}). In particular, we prove that the ergodic averages we compute in the S3 algorithm (section \ref{sec:algorithm}) for the stable and unstable contributions converge to their true values. 

\subsection{Convergence of the stable contribution}
\label{sec:convergenceOfTheStableContribution}
Having shown the S3 decomposition is differentiable on the unstable manifold (section \ref{sec:proofOfTheorem1}), we now derive an alternative expression for the stable contribution. This expression leads to a direct computation of the stable contribution using the regularized tangent equation (section \ref{sec:stableContribution}). We then show that this computation converges to the true stable contribution. 
\begin{proposition}
\label{lem:convergenceOfStableContribution}
The error in the numerically computed stable contribution converges as ${\cal O}(\sqrt{\log\log N
}/\sqrt{N}),$ where $N$ is the trajectory length used in the computation.
\end{proposition}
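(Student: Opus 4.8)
The plan is to establish the statement in two stages: first, derive the closed-form identity $\langle J,\partial_s\mu_s\rangle^{\rm s} = \langle dJ\cdot v,\mu\rangle$ of Eq.~\ref{eqn:stableContributionAlternativeExpression}, which reduces the stable contribution from an infinite series to a single ensemble average of the bounded field $dJ\cdot v$; and second, bound the error of the ergodic-average approximation (Eq.~\ref{eqn:stableContributionErgodicAverage}) of this ensemble average using the central limit theorem.

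For the first stage, I would rewrite the $k$th summand of the stable contribution using the finite-$n$ counterpart of Eq.~\ref{eqn:avrelation}. Combining the recursion Eq.~\ref{eqn:regularizedTangentEquation} with the definition Eq.~\ref{eqn:an} and the identity $\unOrthProjection + qq^T = I$ gives $\chi - a^{K-k}q = v^{K-k} - d\varphi\,v^{K-k-1}$. The crucial observation is that $\langle d(J\circ\varphi^k)\cdot d\varphi\,v^{K-k-1},\mu\rangle = \langle d(J\circ\varphi^{k+1})\cdot v^{K-k-1},\mu\rangle$; this follows from the chain rule $d\varphi^k_x\,d\varphi_{\varphi^{-1}x} = d\varphi^{k+1}_{\varphi^{-1}x}$ together with the $\varphi$-invariance of $\mu$ under the change of variable $y=\varphi^{-1}x$. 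Writing $G(k):=\langle d(J\circ\varphi^k)\cdot v^{K-k},\mu\rangle$, each summand then equals $G(k)-G(k+1)$, so the sum $\sum_{k=0}^{K-1}$ telescopes to $G(0)-G(K) = \langle dJ\cdot v^K,\mu\rangle - \langle d(J\circ\varphi^K)\cdot v^0,\mu\rangle$. With the algorithmic initialization $v^0=0$ the boundary term $G(K)$ vanishes identically, and letting $K\to\infty$ while invoking the uniform convergence $v^K\to v$ from Lemma~\ref{lem:existenceAndUniquenessOfv} yields $\langle J,\partial_s\mu_s\rangle^{\rm s} = \langle dJ\cdot v,\mu\rangle$.

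For the second stage, the computation replaces this ensemble average by the time average $(1/N)\sum_{n=0}^{N-1}(dJ)_n\cdot v_n$ along a $\mu$-typical orbit. I would first argue that $dJ\cdot v$ is a legitimate H\"older observable: in a uniformly hyperbolic system the splitting $E^u\oplus E^s$ is H\"older continuous, so $q$ and the projections $\unOrthProjection,\stableProjection$ are H\"older, and since $v=\sum_{n\ge 0}\unOrthProjection\,d\varphi^n\stableProjection\chi$ converges geometrically its limit is H\"older; its pairing with the $C^1$ covector field $dJ$ is then H\"older as well. The central limit theorem for H\"older observables recalled in section~\ref{sec:srb} then gives, for $\mu$-a.e.\ $x_0$, the bound $|(1/N)\sum_{n=0}^{N-1}(dJ)_n\cdot v_n - \langle dJ\cdot v,\mu\rangle|\le c/\sqrt{N}$, which is precisely the claimed $\mathcal{O}(1/\sqrt{N})$ rate. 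Finally, I would note that the algorithm uses not the exact $v_n$ but the finite-run-up solution $\tilde v_n$ started from $\tilde v_{-K'}=0$; by Lemma~\ref{lem:existenceAndUniquenessOfv} this obeys $\|\tilde v_n - v_n\|\le A\lambda^{n+K'}/(1-\lambda)$, so its contribution to the averaged error is $\mathcal{O}(\lambda^{K'}/N)$, exponentially small in the run-up length and dominated by the sampling term.

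I expect the main obstacle to be the verification that $dJ\cdot v$ has enough regularity (H\"older continuity) for the central limit theorem to apply, since the $\mathcal{O}(1/\sqrt{N})$ rate rests entirely on this; the telescoping identity of the first stage, though it is the key algebraic insight, is mechanical once one spots that the matched index $K-k$ together with the choice $v^0=0$ annihilates the boundary term $\langle d(J\circ\varphi^K)\cdot v^0,\mu\rangle$, which would otherwise have to be shown to decay (it does, by the same cancellation that makes Ruelle's series converge).
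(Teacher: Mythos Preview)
Your proposal is correct and follows essentially the same approach as the paper. The only cosmetic difference is in the first stage: the paper directly invokes the closed form $v^K=\sum_{k=0}^{K-1} d\varphi^k(\chi-a^{K-k}q)$ (with $v^0=0$) and applies $\varphi$-invariance of $\mu$ term-by-term to collapse the defining series to $\langle dJ\cdot v^K,\mu\rangle$, whereas you reach the same identity via the telescoping $\chi-a^{K-k}q=v^{K-k}-d\varphi\,v^{K-k-1}$; both use the same recursion and invariance, just organized differently. Your explicit justification of the H\"older regularity of $dJ\cdot v$ and your treatment of the finite run-up error as $\mathcal{O}(\lambda^{K'}/N)$ are slightly more detailed than the paper's terse appeal to the CLT and its $\mathcal{O}(1/N)$ bound from $\sum_n\lambda^n$, but the substance is the same.
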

 
\begin{proof}
Taking $\lim_{m\to\infty}$ of Eq. \ref{eqn:norm_diff_vmn},
$\|v^n_x - v_x\| < A \dfrac{\lambda^n}{1-\lambda},\;\; \forall \: x \in M\:, \:n \in \mathbb{Z}^+$. Hence, $\left\{v^n\right\}$ is a uniformly bounded sequence. Since $dJ$ is bounded by assumption, by Lebesgue dominated convergence on the sequence of functions $\left\{dJ\cdot v^n\right\}$, $\lim_{n\to \infty} \langle dJ\cdot v^n, \mu \rangle = \langle dJ\cdot v, \mu\rangle.$ From Lemma \ref{lem:existenceAndUniquenessOfv}, an explicit expression for $v^n$ starting from $v^0 = 0 \in \mathbb{R}^m$ is 
$v^n = \sum_{k=0}^{n-1} d\varphi^k (\chi - a^{n-k}q).$ Thus, $\langle J, \partial_s \mu_s\rangle^{\rm s} = \lim_{n\to \infty} \langle dJ\cdot v^n, \mu \rangle = \langle dJ\cdot v, \mu\rangle.$
Choosing a constant $c \geq \sup_{x\in M} \|(dJ)_x\| A/(1-\lambda),$ 
\begin{align}
    |(dJ)_x\cdot v^n_x - (dJ)_x\cdot v_x| < c\: \lambda^n \:\: \forall \: x \in M\:, \:n \in \mathbb{Z}^+.
\end{align}

Let $x$ be an arbitrary point on $M$ chosen $\mu$-a.e. As usual, we use the shorthand $f_n$ to represent $f_{x_n}$, the vector field $f$ evaluated at the point $x_n$. The error in the $N$-time ergodic average is as follows,
\begin{align}
    \left|\dfrac{1}{N}\sum_{n=0}^{N-1} (dJ)_n \cdot (v^n_n - v_n)\right| \leq  \dfrac{c}{N}\: \sum_{n=0}^{N-1}\lambda^n := \frac{c_1}{N}.
\end{align}
Thus, the error in the stable contribution computation can be bounded as follows.
\begin{align}
\notag
 \left|\dfrac{1}{N}\sum_{n=0}^{N-1} (dJ)_n \cdot v^n_n - \langle dJ\cdot v, \mu\rangle\right| &\leq     \left|\dfrac{1}{N}\sum_{n=0}^{N-1} (dJ)_n \cdot (v^n_n - v_n)\right| 
 + \left|\dfrac{1}{N}\sum_{n=0}^{N-1} (dJ)_n \cdot v_n - \langle dJ\cdot v, \mu \rangle\right| \\
 &\leq \dfrac{c_1}{N} + \dfrac{c_2 \sqrt{\log\log N}}{\sqrt{N}}.
\end{align}
The bound $c_2\sqrt{\log\log N}/\sqrt{N}$ appears since 
$(dJ)\cdot v$ is H\"older continuous, and the law of the iterated logarithm \cite{denker_philipp_1984}\cite{nicolCLT} applies to the convergence of its ergodic sum. Thus, we conclude that the error in the stable contribution computation converges, with $N$ -- the trajectory length or the number of sample points according to $\mu$ -- similar to a typical Monte Carlo integration up to an iterated logarithmic factor,
\begin{align}
    \left|\dfrac{1}{N}\sum_{n=0}^{N-1} (dJ)_n \cdot v^n_n - \langle dJ\cdot v, \mu_s\rangle\right| \sim {\cal O}(\sqrt{\log\log N}/\sqrt{N}).
\end{align}
\qed
\end{proof}
\subsection{Convergence of the unstable contribution}
We establish the convergence of the unstable contribution with the help of two lemmas. The first shows the convergence of the recursive formula for the logarithmic density gradient, $g$. The second lemma, Lemma \ref{lem:convergenceOfb}, proves the convergence of the iterative procedure to differentiate the regularized perturbation field, $v,$ on the unstable manifold. Together, these two results can be used to show that the unstable contribution, with the computation described in the S3 algorithm (section \ref{sec:algorithm}), converges.
\subsubsection{Convergence of logarithmic density gradient}
\begin{lemma}
\label{lem:convergenceOfg}
Given a bounded scalar field $r,$ there exists a unique, bounded scalar field $g$ that satisfies
\begin{align}
\label{eqn:conditionForg}
    g\circ\varphi = \dfrac{g}{\alpha\circ\varphi}  + 
    r\circ\varphi,
\end{align}

\end{lemma}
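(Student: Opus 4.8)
The plan is to mirror the uniformly-Cauchy-sequence strategy already used for the vector-valued recursions in Lemmas \ref{lem:existenceAndUniquenessOfv}, \ref{lem:existenceAndUniquenessOfw}, and \ref{lem:existenceAndUniquenessOfy}, now specialized to the scalar recursion \ref{eqn:recursionh}. Rewriting the functional equation \ref{eqn:conditionForg} at a point $x$ gives $g_{\varphi x} = g_x/\alpha_{\varphi x} + r_{\varphi x}$, so I would fix an arbitrary bounded scalar field $h^0$ and define $\{h^n\}$ by $h^{n+1}_{\varphi x} = h^n_x/\alpha_{\varphi x} + r_{\varphi x}$. The aim is to show $\{h^n\}$ is uniformly Cauchy; its limit is then the desired $g$.

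First I would examine the successive differences $D^n := h^{n+1} - h^n$. Since the inhomogeneous term $r_{\varphi x}$ cancels, $D^n$ obeys the homogeneous relation $D^n_{\varphi x} = D^{n-1}_x/\alpha_{\varphi x}$, which iterates (substituting $x \to \varphi^{-1}x$) to $D^n_x = D^0_{\varphi^{-n}x}/\prod_{k=0}^{n-1}\alpha_{\varphi^{-k}x}$. Because $h^0$ and $r$ are bounded and $1/\alpha_x \leq C\lambda$, the field $h^1$ and hence $D^0 = h^1 - h^0$ are bounded by some $A$; combining this with the uniform-hyperbolicity bound $\prod_{k=0}^{n-1}\alpha_{\varphi^{-k}x} \geq (1/C)\lambda^{-n}$ already invoked in Lemmas \ref{lem:existenceAndUniquenessOfw} and \ref{lem:existenceAndUniquenessOfy}, I obtain $|D^n_x| \leq A\,C\,\lambda^n$ uniformly in $x$. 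Summing the geometric series then shows, exactly as in the earlier lemmas, that $\{h^n_x\}$ is uniformly Cauchy and converges uniformly to a bounded limit $g$ with $|g_x| \leq |h^0_x| + A\,C/(1-\lambda)$; passing to the limit in the recursion confirms that $g$ satisfies \ref{eqn:conditionForg}.

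For uniqueness I would suppose $g$ and $\tilde g$ are two bounded solutions, so that $\Delta g := g - \tilde g$ satisfies the purely homogeneous equation $\Delta g_{\varphi x} = \Delta g_x/\alpha_{\varphi x}$. Iterating backward gives $\Delta g_x = \Delta g_{\varphi^{-n}x}/\prod_{k=0}^{n-1}\alpha_{\varphi^{-k}x}$, whence the same product bound yields $|\Delta g_x| \leq C\lambda^n\|\Delta g\| \to 0$, forcing $\Delta g \equiv 0$. It is worth recording the closed form that the iteration produces, namely $g_x = \sum_{k=0}^\infty r_{\varphi^{-k}x}\big/\prod_{j=0}^{k-1}\alpha_{\varphi^{-j}x}$ with the empty product at $k=0$ taken to be $1$, which makes both the convergence and the independence of $g$ from the initialization $h^0$ immediately transparent.

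The argument is essentially routine once the expansion-product bound is available; the only care required is in the bookkeeping of the backward compositions $\varphi^{-k}$ and the empty-product convention. The genuine conceptual content — and the reason convergence holds — is that dividing by the unstable expansion factor $\alpha$ at each step acts as a contraction whose rate is precisely the backward contraction rate $\lambda$ of the unstable bundle, so that, in contrast to the vector-valued Lemmas \ref{lem:existenceAndUniquenessOfw} and \ref{lem:existenceAndUniquenessOfy}, neither projection operators nor second-order derivative tensors enter the estimate.
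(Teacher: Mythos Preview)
Your proposal is correct and follows essentially the same route as the paper: define the iteration $\{h^n\}$, show the successive differences obey the homogeneous recursion and decay geometrically via the product bound $\prod_{k=0}^{n-1}\alpha_{\varphi^{-k}x}\geq (1/C)\lambda^{-n}$, conclude the sequence is uniformly Cauchy, and handle uniqueness by the same contraction estimate applied to the difference of two solutions. The closed-form series and the remark on why no projection operators are needed are nice additions but do not depart from the paper's argument.
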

Note that when $r = -\gamma/\alpha$, the logarithmic density gradient function satisfies Eq. \ref{eqn:conditionForg}. 
\begin{proof}
First we show existence. Let $h^0:M\to \mathbb{R}$ be an arbitrary bounded scalar function.
Consider the sequence of scalar functions $\left\{h^n\right\}$ which follow the recurrence relation:
\begin{align}
\label{eqn:recurrenceh}
    h^{n+1}\circ\varphi = \dfrac{h^n}{\alpha\circ\varphi} + r\circ\varphi.
\end{align}
We show that this sequence converges uniformly. From the above recurrence relation, at every $x \in M$,
\begin{align}
   h^{n+1}_x - h^n_x = \dfrac{h^n_{\varphi^{-1} x}}{\alpha_{x}}  - \dfrac{h^{n-1}_{\varphi^{-1} x}}{\alpha_{x}}.
\end{align}
Applying this relation recursively, for all $n\geq 1,$
\begin{align}
    \abs{ h^{n+1}_x - h^n_x} \leq \dfrac{1}{\prod_{k=0}^{n-1}\alpha_{\varphi^{-k}x}}\abs{h^1_{\varphi^{-n} x}  - h^{0}_{\varphi^{-n} x}}.
\end{align}
Under the assumption of uniform hyperbolicity, there exist constants $C > 0$ and 
$\lambda \in (0,1)$ such that $\prod_{k=0}^{n-1}\alpha_{\varphi^{-k}x} \geq (1/C)\lambda^{-n},$ for all $x \in M.$ Thus, 
\begin{align}
    \abs{ h^{n+1}_x - h^n_x} \leq C \lambda^n \abs{h^1_{\varphi^{-n} x}  - h^{0}_{\varphi^{-n} x}}.
\end{align}
Since $h^0$ is a bounded function, and $h^1$ is also a bounded function because $h^0$ and $r$ are bounded, there exists $A >0$ such that $|h^1_x - h^0_x| \leq A,$ for all $x \in M.$ Thus, for all $i > j \ge 0,$
\begin{align}
    \abs{ h^i_x - h^j_x} \leq \sum_{n=j}^{i-1}\abs{ h^{n+1}_x - h^n_x} \leq A\: C\: \dfrac{\lambda^j}{1-\lambda}. 
\end{align}
Hence $\left\{h^n\right\}$ is a uniformly Cauchy sequence and therefore converges uniformly. Let $g := \lim_{n\to\infty} h^n.$ We show that this limit is unique. Let $\left\{h^n\right\}$ and $\{\tilde{h}^n\}$ be two different sequences satisfying Eq. \ref{eqn:recurrenceh}. Additionally, assume that both $h^0$ and $\tilde{h}^0$ are bounded functions. Then,
\begin{align}
    |h^n_x - \tilde{h}^n_x| 
    \leq \dfrac{|h^{n-1}_{\varphi^{-1}x} - \tilde{h}^{n-1}_{\varphi^{-1}x}|}{\alpha_{\varphi^{-1}x}},
\end{align}
which by iteration, gives,
\begin{align}
     |h^n_x - \tilde{h}^n_x| 
    \leq \dfrac{|h^0_{\varphi^{-n}x} - \tilde{h}^0_{\varphi^{-n}x}|}{\prod_{k=1}^n\alpha_{\varphi^{-k}x}}\leq C\:\lambda^n \: |h^0_{\varphi^{-n}x} - \tilde{h}^0_{\varphi^{-n}x}|.
\end{align}
Thus $|h^n_x - \tilde{h}^n_x|\to 0$ as $n\to \infty$ since $|h^0_{\varphi^{-n}x} - \tilde{h}^0_{\varphi^{-n}x}| < {\rm const}$.  Since this holds for all $x \in M,$
$\lim_{n\to\infty} h^n = \lim_{n\to\infty} \tilde{h}^n = g.$
\qed
\end{proof}
We have shown that the limit $g$ is independent of the initial condition $h^0.$ Hence, without loss of generality, we may assume that $h^0_x = 0$ at all $x \in M,$ as we do in section \ref{sec:formulaforg}. We remark that, due to the algebraic simplification introduced in section \ref{sec:unstableContributionComputation}, the explicit iteration of Eq. \ref{eqn:recurrenceh} is subsumed under the combination of Eq. \ref{eqn:pcomputation} and \ref{eqn:secondOrderStableTangentEquationFinal}. 

\subsubsection{Convergence of the unstable contribution computation}
\label{sec:unstableContributionProof}
In the S3 algorithm, the computation of the unstable contribution is carried out as an ergodic average. We use the results we have set up so far to show that this ergodic average converges to the true unstable contribution.
From our derivation (section \ref{sec:integrationByParts}) and the strong decay of correlations assumption (section \ref{sec:unstableContributionComputation}), we obtain the following regularized expression for the unstable contribution,
\begin{align}
\label{eqn:series}
    \langle J, \partial_s \mu_s \rangle^{\rm u} =  -\sum_{k=0}^{\infty} \langle J\circ\varphi^k ( a \: g + b), \mu\rangle. 
\end{align}
In order to show that our computation of the unstable contribution converges, we first note that, due to exponential decay of correlations between H\"older continuous functions (section \ref{sec:integrationByParts}), the above series converges. Further, we show in Lemmas \ref{lem:convergenceOfa}, \ref{lem:convergenceOfb} and \ref{lem:convergenceOfg} that the convergence to the true values of $a$, $b$ and $g$ respectively is exponential in each case, along any $\mu$-typical orbit. Thus, iterating the equations for these quantities for a sufficient run-up time, we may assume that they are, up to machine precision, equal to their true values.  The convergence rate of the unstable contribution is thus determined by that of the following $N$-sample averages, at the first few $k \in \mathbb{Z}^+$,
\begin{align}
    \left|\sum_{k=0}^{K-1}\left(\langle J\circ\varphi^k ( a \: g + b), \mu\rangle   -\dfrac{1}{N}\sum_{n=0}^{N-1} J_{n+k} \: (a_n\: g_n + b_n)\right)\right| \leq \dfrac{\sum_{k < K}c_k\;\sqrt{\log\log N}}{\sqrt{N}}.
\end{align}
The almost sure error bound of ${\cal O}(\sqrt{\log\log N}/\sqrt{N})$ again appears because of the law of the iterated logarithm, which holds for H\"older continuous functions \cite{denker_philipp_1984}\cite{nicolCLT}. 
Given $\epsilon > 0,$ there exists a $K$ such that $\left|\sum_{k > K} \langle J\circ\varphi^k (a\: g + b), \mu\rangle\right| < \epsilon/2,$ due to the convergence of \cref{eqn:series}. Then, choosing $N$ large enough such that $\sum_{k < K} c_k\sqrt{\log\log N}/\sqrt{N} < \epsilon/2,$ we approximate the unstable contribution arbitrarily well, $\left|\langle J, \partial_s \mu_s\rangle^{\rm u} - \sum_{k=0}^{K-1} (1/N)\sum_{n=0}^{N-1} J_{n+k} \: (a_n\: g_n + b_n)\right| < \epsilon.$ This numerical approximation converges to the unstable contribution as $N \to \infty$ followed by as $K \to \infty.$
We have already shown that the stable contribution computation has an error convergence rate of ${\cal O}(\sqrt{\log\log N}/\sqrt{N})$ (Lemma \ref{lem:convergenceOfStableContribution}). This proves that the sum of stable and unstable contribution converges (discounting the bias due to a finite $K$) as ${\cal O}(\sqrt{\log\log N}/\sqrt{N})$, with the S3 algorithm implemented on an $N$-length trajectory.
\section{Discussion and conclusion}
\label{sec:conclusion}
The main contribution of this paper is a new algorithm, called space-split sensitivity (S3), to compute linear response, or the sensitivities of statistics to parameters, in chaotic dynamical systems. The paper presents the derivation of S3 with Ruelle's linear response formula as the starting point. While Ruelle's formula leads to an ill-conditioned direct computation, we derive a decomposition -- the S3 decomposition -- of the formula that yields a well-conditioned ergodic-averaging computation. The S3 decomposition is achieved by orthogonalizing the conventional tangent solution with respect to the unstable subspaces along a long trajectory. This procedure yields a regularized tangent vector field $v$ and a scalar field $a.$ 

The sensitivity to the unstable vector field so achieved -- $a\: q$ -- is called the unstable contribution. The stable contribution is the remaining linear response, and can be calculated just as tangent sensitivities are in non-chaotic systems, but using the regularized tangent solutions in place of the conventional tangent solutions. The unstable contribution is integrated by parts on the unstable manifold to yield an expression in which, unlike the original form of Ruelle's formula, the integrand is bounded at all times. Then, exploiting the decay of correlations, this regularized formula can be computed fast as a sum of Monte Carlo integrals. However, the problem is, we obtain two unknown functions -- the derivative of $a$ on the unstable manifold, and the density gradient. The density gradient is a fundamental object whose regularity we link, in a different study \cite{adam2}, to the validity of linear response \cite{wormell1}\cite{wormell2}. 

The density gradient is the unstable derivative of the logarithmic conditional density of the SRB measure on the unstable manifold. We take the unstable derivative of Pesin's formula to obtain a recursive evaluation of the density gradient along a typical trajectory. We show that starting this recursion with an arbitrary scalar field leads to an exponential convergence to the true density gradient. A recursive procedure is also derived for the unstable derivative of $a$ by combining the unstable derivatives of $v$ and $q.$ The evaluation of the latter two unstable derivatives requires solving two second-order tangent equations, which are the most computationally intensive steps of the S3 algorithm. 

Overall, the S3 algorithm computes Ruelle's formula as a well-conditioned ergodic average. The efficiency of the S3 algorithm stems from its decomposition of Ruelle's formula and its treatment of the unstable contribution. In Ruelle's formula (Eq. \ref{eqn:ruellesFormula}), the norm of the integrand increases exponentially with $n,$ which leads to the ill-conditioning of the estimation of each term as an ergodic average. However, in the unstable contribution after applying the S3 decomposition (Eq. \ref{eqn:unstableContributionRegularizedExpression}), the norm of the integrand does not grow with $n$ and hence, the estimation of the S3 formula as an ergodic average is efficient. 

An important direction of future work is to make S3 applicable to systems with higher-dimensional unstable manifolds. Instead of being one-dimensional, as we have assumed throughout this paper, suppose $E^u_x$ is an $m_u \ge 1$ dimensional subspace at each $x \in M.$ We anticipate that the S3 algorithm then involves the following changes, which will be pursued in a future work:
\begin{enumerate}
    \item We must compute, e.g. using Ginelli's algorithm \cite{ginelli}, an orthonormal basis for the unstable subspaces along an orbit. This more general procedure reduces to Step 2 of the S3 algorithm (section \ref{sec:algorithm}), in the case of a 1D unstable manifold. Let $q^i_{x_n}, 1\leq i\leq m_u, 1\leq n\leq N$ be the resultant orthogonal tangent vectors that span $E^u_{x_n}, 1\leq n \leq N$.
    \item The regularized tangent equation must now be orthogonalized with respect to the computed unstable subspaces along the trajectory, as opposed to with respect to a single direction. This orthogonalization results in a regularized tangent vector field $v$ and $m_u$ different scalar fields, $a^i, 1\leq i\leq m_u,$ in place of a single scalar field $a.$ The stable contribution may be computed using $v$ just as in the 1D unstable manifold case (Eq. \ref{eqn:stableContributionErgodicAverage}).
    \item The first step in treating the unstable contribution is the integration by parts on the unstable manifold. In the multi-dimensional unstable manifold case, the parameterization $\Phi^x$ is now a map from $[0,1]^{m_u}$ to $\Xi_x,$ such that $\nabla_{\xi} \Phi^{x_n}(0)$ maps the standard Euclidean basis vector $e^i$ in $\mathbb{R}^{m_u}$ to $q^i_{x_n}$.  Using disintegration followed by integration by parts on the $m_u$-dimensional unstable manifold results in a regularized expression for the unstable contribution, analogous to the 1D case (Eq. \ref{eqn:unstableContributionRegularized}),
    \begin{align}
        \langle J, \partial_s\mu_s \rangle^{\rm u} = - \sum_{n=0}^\infty \langle J\circ\varphi^n ( a\cdot g + {\rm div}^{\rm u} a), 
        \mu\rangle. 
    \end{align}
    Here $a:M \to \mathbb{R}^{m_u}$ and $g: M\to \mathbb{R}^{m_u}$ are now vector-valued functions on $M.$ The term ${\rm div}^{\rm u} a$ refers to the unstable divergence of the unstable perturbation field $\sum_i a^i q^i.$ Now, analogous to the computation of $ag + b$ in the 1D unstable manifold, which we tackled in this paper, we must derive a new computation for the derivative of $a\cdot g + {\rm div}^{\rm u} a$. Such a computation must similarly be recursive so that it can be efficiently carried out using information available along an orbit. 
    
    Since we used the unstable derivatives of $\alpha,$ $v$ and $q$ in the computation of $ag + b$ in the 1D unstable manifold case, we expect analogous derivatives are necessary in higher dimensions. In particular, the unstable derivatives -- which are $m_u$-dimensional gradients, i.e., partial derivatives taken with respect to each $\xi^i$ corresponding to $q^i$ -- of $v$ and each $q^i$ are needed. The higher dimensional analog of $\alpha$ are the diagonal elements of the R matrix in the QR-based iterative computation in Ginelli's algorithm; the unstable gradient of the diagonal elements of $R$ are needed. Computing these required derivatives efficiently is the crux of this future direction that will enable the application of S3 to systems with arbitrary dimensional unstable manifolds.
\end{enumerate}

From the outline sketched above, it is amply clear that the S3 algorithm presented here serves as an excellent starting point from which to extend to higher-dimensional unstable manifolds. Moreover, we have proved the existence of the S3 decomposition and the convergence of the algorithm in uniformly hyperbolic systems. The proof of convergence presented in this paper is also readily extensible to the case of higher-dimensional unstable manifolds. Here we have shown that both the stable and unstable contributions to the sensitivity, computed as per the S3 algorithm, have an error convergence that declines as a Monte Carlo computation of an ergodic average. Thus, this work is the first step toward circumventing the poor convergence rate of Ruelle's formula for linear response.

\section*{Acknowledgments}
We are greatly indebted to Malo J\'ez\'equel, the anonymous reviewers, Youssef Marzouk and Angxiu Ni for pointing out errors and substantially improving the manuscript.

\bibliographystyle{siamplain}
\bibliography{refs}

\begin{thebibliography}{10}

\bibitem{abramov}
{\sc R.~V. Abramov and A.~J. Majda}, {\em Blended response algorithms for
  linear fluctuation-dissipation for complex nonlinear dynamical systems},
  Nonlinearity, 20 (2007), pp.~2793--2821,
  \url{https://doi.org/10.1088/0951-7715/20/12/004},
  \url{https://doi.org/10.1088/0951-7715/20/12/004}.

\bibitem{ashwin}
{\sc P.~Ashwin and A.~von~der Heydt}, {\em Extreme sensitivity and climate
  tipping points}, J Stat Phys, 179 (2020), pp.~1531--1552,
  \url{https://doi.org/10.1007/s10955-019-02425-x}.

\bibitem{bahsoun-computation}
{\sc W.~Bahsoun, S.~Galatolo, I.~Nisoli, and X.~Niu}, {\em A rigorous
  computational approach to linear response}, Nonlinearity, 31 (2018),
  pp.~1073--1109, \url{https://doi.org/10.1088/1361-6544/aa9a88}.

\bibitem{bahsoun}
{\sc W.~Bahsoun and B.~Saussol}, {\em Linear response in the intermittent
  family: Differentiation in a weighted $c^0$ -norm}, Discrete \& Continuous
  Dynamical Systems, 36 (2016), pp.~6657--6668,
  \url{https://doi.org/10.3934/dcds.2016089}.

\bibitem{baladi-nonuniform}
{\sc V.~Baladi and D.~Smania}, {\em Linear response for smooth deformations of
  generic nonuniformly hyperbolic unimodal maps}, Annales scientifiques de
  l'ENS, 45 (2012), pp.~861--926.

\bibitem{baladi-intermittent}
{\sc V.~Baladi and M.~Todd}, {\em Linear response for intermittent maps},
  Commun. Math. Phys., 347 (2016), pp.~857--–874,
  \url{https://doi.org/10.1007/s00220-016-2577-z}.

\bibitem{patrick}
{\sc P.~J. Blonigan}, {\em Adjoint sensitivity analysis of chaotic dynamical
  systems with non-intrusive least squares shadowing}, Journal of Computational
  Physics, 348 (2017), pp.~803--826,
  \url{https://doi.org/10.1016/j.jcp.2017.08.002}.

\bibitem{patrick-rare}
{\sc P.~J. Blonigan, M.~Farazmand, and T.~P. Sapsis}, {\em Are extreme
  dissipation events predictable in turbulent fluid flows?}, Phys. Rev. Fluids,
  4 (2019), p.~044606, \url{https://doi.org/10.1103/PhysRevFluids.4.044606},
  \url{https://link.aps.org/doi/10.1103/PhysRevFluids.4.044606}.

\bibitem{bomfim}
{\sc T.~Bomfim, A.~Castro, and P.~Varandas}, {\em Differentiability of
  thermodynamical quantities in non-uniformly expanding dynamics}, Advances in
  Mathematics, 292 (2016), pp.~478--528,
  \url{https://doi.org/https://doi.org/10.1016/j.aim.2016.01.017},
  \url{https://www.sciencedirect.com/science/article/pii/S000187081600044X}.

\bibitem{lucarini-lin-res}
{\sc T.~Bódai, V.~Lucarini, and F.~Lunkeit}, {\em Can we use linear response
  theory to assess geoengineering strategies?}, Chaos: An Interdisciplinary
  Journal of Nonlinear Science, 30 (2020), p.~023124,
  \url{https://doi.org/10.1063/1.5122255},
  \url{https://doi.org/10.1063/1.5122255},
  \url{https://arxiv.org/abs/https://doi.org/10.1063/1.5122255}.

\bibitem{ginelli-clv-review}
{\sc M.~Cencini and F.~Ginelli}, {\em Lyapunov analysis: from dynamical systems
  theory to applications}, Journal of Physics A: Mathematical and Theoretical,
  46 (2013), p.~250301, \url{https://doi.org/10.1088/1751-8113/46/25/250301},
  \url{https://doi.org/10.1088%2F1751-8113%2F46%2F25%2F250301}.

\bibitem{cessac}
{\sc B.~Cessac}, {\em Linear response in neuronal networks: From neurons
  dynamics tocollective response}, Chaos, 29 (2019), p.~103105,
  \url{https://doi.org/10.1063/1.5111803}.

\bibitem{nisha-code}
{\sc N.~Chandramoorthy}, {\em nishachandramoorthy/s3: Differential clv method},
  1.0.0,  (2020), \url{https://doi.org/10.5281/zenodo.3941678},
  \url{https://doi.org/10.5281/zenodo.3941678}.

\bibitem{nisha_ES}
{\sc N.~Chandramoorthy, P.~Fernandez, C.~Talnikar, and Q.~Wang}, {\em
  Feasibility analysis of ensemble sensitivity computation in turbulent
  flows}, AIAA Journal, 57 (2019), pp.~4514--4526,
  \url{https://doi.org/10.2514/1.J058127},
  \url{https://doi.org/10.2514/1.J058127},
  \url{https://arxiv.org/abs/https://doi.org/10.2514/1.J058127}.

\bibitem{nisha-luca}
{\sc N.~{Chandramoorthy}, L.~{Magri}, and Q.~{Wang}}, {\em {Variational
  optimization and data assimilation in chaotic time-delayed systems with
  automatic-differentiated shadowing sensitivity}}, arXiv e-prints,  (2020),
  arXiv:2011.08794, p.~arXiv:2011.08794,
  \url{https://arxiv.org/abs/2011.08794}.

\bibitem{nisha-aiaa}
{\sc N.~Chandramoorthy and Q.~Wang}, {\em Sensitivity computation of
  statistically stationary quantities in turbulent flows}, in AIAA Aviation
  2019 Forum, 2019, \url{https://doi.org/10.2514/6.2019-3426},
  \url{https://arc.aiaa.org/doi/abs/10.2514/6.2019-3426},
  \url{https://arxiv.org/abs/https://arc.aiaa.org/doi/pdf/10.2514/6.2019-3426}.

\bibitem{nisha-s3}
{\sc N.~{Chandramoorthy} and Q.~{Wang}}, {\em {A computable realization of
  Ruelle's formula for linear response of statistics in chaotic systems}},
  arXiv e-prints,  (2020), arXiv:2002.04117, p.~arXiv:2002.04117,
  \url{https://arxiv.org/abs/2002.04117}.

\bibitem{nisha_clv}
{\sc N.~{Chandramoorthy} and Q.~{Wang}}, {\em {An ergodic averaging method to
  differentiate covariant Lyapunov vectors}}, Nonlinear Dyn, 104 (2021),
  pp.~4083–--4102, \url{https://doi.org/10.1007/s11071-021-06478-0}.

\bibitem{nisha-qiqi-shadowing}
{\sc N.~Chandramoorthy and Q.~Wang}, {\em On the probability of finding
  nonphysical solutions through shadowing}, Journal of Computational Physics,
  440 (2021), p.~110389, \url{https://doi.org/10.1016/j.jcp.2021.110389},
  \url{https://www.sciencedirect.com/science/article/pii/S0021999121002849}.

\bibitem{mario}
{\sc M.~Chater, A.~Ni, P.~Blonigan, and Q.~Wang}, {\em Least squares shadowing
  method for sensitivity analysis of differential equations}, SIAM J. Numer.
  Anal., 55 (2017), pp.~3030--3046, \url{https://doi.org/10.1137/15M1039067}.

\bibitem{chekroun}
{\sc M.~D. Chekroun, J.~D. Neelin, D.~Kondrashov, J.~C. McWilliams, and
  M.~Ghil}, {\em Rough parameter dependence in climate models and the role of
  ruelle-pollicott resonances}, Proceedings of the National Academy of
  Sciences, 111 (2014), pp.~1684--1690,
  \url{https://doi.org/10.1073/pnas.1321816111}.

\bibitem{chernov}
{\sc N.~I. Chernov}, {\em Limit theorems and markov approximations for chaotic
  dynamical systems}, Probability Theory and Related Fields, 101 (1995),
  pp.~321--362, \url{https://doi.org/10.1007/BF01200500},
  \url{https://doi.org/10.1007/BF01200500}.

\bibitem{vaughn}
{\sc V.~Climenhaga and A.~Katok}, {\em Measure theory through dynamical eyes},
  2012, \url{https://arxiv.org/abs/1208.4550}.

\bibitem{shadowing-DA}
{\sc B.~de~Leeuw, S.~Dubinkina, J.~Frank, A.~Steyer, X.~Tu, and E.~V. Vleck},
  {\em Projected shadowing-based data assimilation}, SIAM J. Appl. Dyn. Syst.,
  17 (2018), pp.~2446--–2477, \url{https://doi.org/10.1137/17M1141163}.

\bibitem{denker_philipp_1984}
{\sc M.~Denker and W.~Philipp}, {\em Approximation by brownian motion for gibbs
  measures and flows under a function}, Ergodic Theory and Dynamical Systems, 4
  (1984), p.~541–552, \url{https://doi.org/10.1017/S0143385700002637}.

\bibitem{dolgopyat}
{\sc D.~Dolgopyat}, {\em On differentiability of srb states for partially
  hyperbolic systems}, Inventiones Mathematicae, 155 (2004), pp.~389--449,
  \url{https://search.proquest.com/docview/881392378?accountid=12492}.
\newblock Copyright - Springer-Verlag 2004; Last updated - 2012-06-29.

\bibitem{semyon}
{\sc S.~Dyatlov}, {\em Notes on hyperbolic dynamics}, 2018,
  \url{https://arxiv.org/abs/1805.11660}.

\bibitem{eyink}
{\sc G.~Eyink, T.~Haine, and D.~Lea}, {\em Ruelle's linear response formula,
  ensemble adjoint schemes and l{\'e}vy flights}, Nonlinearity, 17 (2004),
  p.~1867, \url{https://doi.org/10.1088/0951-7715/17/5/016}.

\bibitem{galatolo}
{\sc S.~Galatolo and P.~Giulietti}, {\em A linear response for dynamical
  systems with additive noise}, Nonlinearity, 32 (2019), pp.~2269--2301,
  \url{https://doi.org/10.1088/1361-6544/ab0c2e}.

\bibitem{ginelli}
{\sc F.~Ginelli, H.~Chat{\'{e}}, R.~Livi, and A.~Politi}, {\em Covariant
  lyapunov vectors}, Journal of Physics A: Mathematical and Theoretical, 46
  (2013), p.~254005, \url{https://doi.org/10.1088/1751-8113/46/25/254005}.

\bibitem{gouezel}
{\sc S.~Gou{\"e}zel, C.~Liverani, et~al.}, {\em Compact locally maximal
  hyperbolic sets for smooth maps: fine statistical properties}, Journal of
  Differential Geometry, 79 (2008), pp.~433--477,
  \url{https://doi.org/10.4310/jdg/1213798184}.

\bibitem{hairer}
{\sc M.~Hairer and A.~J. Majda}, {\em A simple framework to justify linear
  response theory}, Nonlinearity, 23 (2010), pp.~909--922,
  \url{https://doi.org/10.1088/0951-7715/23/4/008},
  \url{https://doi.org/10.1088/0951-7715/23/4/008}.

\bibitem{hasselblatt_prevalence_1999}
{\sc B.~Hasselblatt and A.~Wilkinson}, {\em Prevalence of non-{Lipschitz}
  {Anosov} foliations}, Ergodic Theory and Dynamical Systems, 19 (1999),
  pp.~643--656, \url{https://doi.org/10.1017/S0143385799133868}.
\newblock Publisher: Cambridge University Press.

\bibitem{miahao}
{\sc M.~Jiang}, {\em Differentiating potential functions of srb measures on
  hyperbolic attractors}, Ergodic Theory and Dynamical Systems, 32 (2012),
  pp.~1350--1369, \url{https://doi.org/10.1017/S0143385711000241}.

\bibitem{katok}
{\sc A.~Katok and B.~Hasselblatt}, {\em Introduction to the modern theory of
  dynamical systems}, vol.~54, Cambridge university press, 1997,
  \url{https://doi.org/10.1017/CBO9780511809187}.

\bibitem{davide}
{\sc D.~Lasagna}, {\em Sensitivity analysis of chaotic systems using unstable
  periodic orbits}, SIAM Journal on Applied Dynamical Systems, 17 (2018),
  pp.~547--580, \url{https://doi.org/10.1137/17M114354X},
  \url{https://doi.org/10.1137/17M114354X},
  \url{https://arxiv.org/abs/https://doi.org/10.1137/17M114354X}.

\bibitem{lea}
{\sc D.~J. Lea, M.~R. Allen, and T.~W. Haine}, {\em Sensitivity analysis of the
  climate of a chaotic system}, Tellus A: Dynamic Meteorology and Oceanography,
  52 (2000), pp.~523--532,
  \url{https://doi.org/10.1034/j.1600-0870.2000.01137.x}.

\bibitem{ledrappier-young}
{\sc F.~Ledrappier and L.-S. Young}, {\em The metric entropy of
  diffeomorphisms: Part i: Characterization of measures satisfying pesin's
  entropy formula}, Annals of Mathematics, 122 (1985), pp.~509--539,
  \url{http://www.jstor.org/stable/1971328}.

\bibitem{liverani}
{\sc C.~Liverani}, {\em Central limit theorem for deterministic systems}, in
  International Conference on Dynamical Systems (Montevideo, 1995), vol.~362,
  1996, pp.~56--75.

\bibitem{lucarini-stochastic}
{\sc V.~Lucarini}, {\em Stochastic perturbations to dynamical systems: A
  response theory approach}, J Stat Phys, 146 (2012), pp.~774–--786,
  \url{https://doi.org/10.1007/s10955-012-0422-0}.

\bibitem{lucarini-review}
{\sc V.~Lucarini, R.~Blender, C.~Herbert, F.~Ragone, S.~Pascale, and
  J.~Wouters}, {\em Mathematical and physical ideas for climate science}, Rev.
  Geophys., 52 (2014), pp.~809--859,
  \url{https://doi.org/10.1002/2013RG000446}.

\bibitem{lucarini-tipping}
{\sc V.~Lucarini and T.~B\'odai}, {\em Transitions across melancholia states in
  a climate model: Reconciling the deterministic and stochastic points of
  view}, Phys. Rev. Lett., 122 (2019), p.~158701,
  \url{https://doi.org/10.1103/PhysRevLett.122.158701},
  \url{https://link.aps.org/doi/10.1103/PhysRevLett.122.158701}.

\bibitem{nicolCLT}
{\sc I.~Melbourne and M.~Nicol}, {\em {A vector-valued almost sure invariance
  principle for hyperbolic dynamical systems}}, The Annals of Probability, 37
  (2009), pp.~478 -- 505, \url{https://doi.org/10.1214/08-AOP410},
  \url{https://doi.org/10.1214/08-AOP410}.

\bibitem{angxiu-jfm}
{\sc A.~Ni}, {\em Hyperbolicity, shadowing directions and sensitivity analysis
  of a turbulent three-dimensional flow}, Journal of Fluid Mechanics, 863
  (2019), p.~644–669, \url{https://doi.org/10.1017/jfm.2018.986}.

\bibitem{angxiu-shadowing-error}
{\sc A.~{Ni}}, {\em {Approximating linear response by non-intrusive shadowing
  algorithms}}, arXiv e-prints,  (2020), arXiv:2003.09801, p.~arXiv:2003.09801,
  \url{https://arxiv.org/abs/2003.09801}.

\bibitem{angxiu-s3}
{\sc A.~{Ni}}, {\em {Linear response algorithm for differentiating stationary
  measures of chaos}}, arXiv e-prints,  (2020), arXiv:2009.00595,
  p.~arXiv:2009.00595, \url{https://arxiv.org/abs/2009.00595}.

\bibitem{angxiu-nilsas}
{\sc A.~Ni and C.~Talnikar}, {\em Adjoint sensitivity analysis on chaotic
  dynamical systems by non-intrusive least squares adjoint shadowing (nilsas)},
  Journal of Computational Physics, 395 (2019), pp.~690 -- 709,
  \url{https://doi.org/https://doi.org/10.1016/j.jcp.2019.06.035},
  \url{http://www.sciencedirect.com/science/article/pii/S0021999119304437}.

\bibitem{angxiu}
{\sc A.~Ni and Q.~Wang}, {\em Sensitivity analysis on chaotic dynamical systems
  by non-intrusive least squares shadowing (nilss)}, Journal of Computational
  Physics, 347 (2017), pp.~56--77,
  \url{https://doi.org/10.1016/j.jcp.2017.06.033}.

\bibitem{pesin}
{\sc Y.~B. Pesin}, {\em Characteristic lyapunov exponents and smooth ergodic
  theory}, Uspekhi Matematicheskikh Nauk, 32 (1977), pp.~55--112.

\bibitem{pugh}
{\sc C.~Pugh, M.~Shub, A.~Wilkinson, et~al.}, {\em H{\"o}lder foliations}, Duke
  Mathematical Journal, 86 (1997), pp.~517--546.

\bibitem{courtney}
{\sc C.~Quinn, J.~Sieber, and A.~von~der Heydt}, {\em Effects of periodic
  forcing on a paleoclimate delay model}, SIAM J. Appl. Dyn. Syst., 18 (2019),
  pp.~1060--–1077, \url{https://doi.org/10.1137/18M1203079}.

\bibitem{lucarini_climate}
{\sc F.~Ragone, V.~Lucarini, and F.~Lunkeit}, {\em A new framework for climate
  sensitivity and prediction: a modelling perspective}, Climate Dynamics, 46
  (2016), pp.~1459--1471, \url{https://doi.org/10.1007/s00382-015-2657-3}.

\bibitem{ruelle}
{\sc D.~Ruelle}, {\em Differentiation of srb states}, Communications in
  Mathematical Physics, 187 (1997), pp.~227--241,
  \url{https://doi.org/10.1007/s002200050134}.

\bibitem{ruelle1}
{\sc D.~Ruelle}, {\em Differentiation of srb states: correction and
  complements}, Communications in mathematical physics, 234 (2003),
  pp.~185--190, \url{https://doi.org/10.1007/s00220-002-0779-z}.

\bibitem{ruelle_hyperbolicflows}
{\sc D.~Ruelle}, {\em Differentiation of srb states for hyperbolic flows},
  Ergodic Theory and Dynamical Systems, 28 (2008), pp.~613--631,
  \url{https://doi.org/10.1017/S0143385707000260}.

\bibitem{ruelle_review}
{\sc D.~Ruelle}, {\em A review of linear response theory for general
  differentiable dynamical systems}, Nonlinearity, 22 (2009), p.~855,
  \url{https://doi.org/10.1088/0951-7715/22/4/009}.

\bibitem{adam}
{\sc A.~Sliwiak, N.~Chandramoorthy, and Q.~Wang}, {\em Ergodic sensitivity
  analysis of one-dimensional chaotic maps}, Theoretical and Applied Mechanics
  Letters, 10 (2020), pp.~1--10,
  \url{https://doi.org/10.1016/j.taml.2020.01.058}.

\bibitem{chai}
{\sc C.~Talnikar, Q.~Wang, and G.~M. Laskowski}, {\em Unsteady adjoint of
  pressure loss for a fundamental transonic turbine vane}, Journal of
  Turbomachinery, 139 (2017), p.~031001,
  \url{https://doi.org/10.1115/1.4034800}.

\bibitem{qiqi}
{\sc Q.~Wang}, {\em Convergence of the least squares shadowing method for
  computing derivative of ergodic averages}, SIAM Journal on Numerical
  Analysis, 52 (2014), pp.~156--170, \url{https://doi.org/10.1137/130917065}.

\bibitem{wormell2}
{\sc C.~L. Wormell and G.~A. Gottwald}, {\em On the validity of linear response
  theory in high-dimensional deterministic dynamical systems}, J Stat Phys, 172
  (2018), pp.~1479–--1498, \url{https://doi.org/10.1007/s10955-018-2106-x}.

\bibitem{wormell1}
{\sc C.~L. Wormell and G.~A. Gottwald}, {\em Linear response for macroscopic
  observables in high-dimensional systems}, Chaos: An Interdisciplinary Journal
  of Nonlinear Science, 29 (2019), p.~113127,
  \url{https://doi.org/10.1063/1.5122740},
  \url{https://doi.org/10.1063/1.5122740},
  \url{https://arxiv.org/abs/https://doi.org/10.1063/1.5122740}.

\bibitem{young-hyperbolicity}
{\sc L.-S. Young}, {\em Statistical properties of dynamical systems with some
  hyperbolicity}, Annals of Mathematics, 147 (1998), pp.~585--650,
  \url{https://doi.org/10.2307/120960}.

\bibitem{srb}
{\sc L.-S. Young}, {\em What are srb measures, and which dynamical systems have
  them?}, Journal of Statistical Physics, 108 (2002), pp.~733--754,
  \url{https://doi.org/10.1023/A:1019762724717}.

\bibitem{adam2}
{\sc A.~A. Śliwiak, N.~Chandramoorthy, and Q.~Wang}, {\em Computational
  assessment of smooth and rough parameter dependence of statistics in chaotic
  dynamical systems}, Communications in Nonlinear Science and Numerical
  Simulation, 101 (2021), p.~105906,
  \url{https://doi.org/10.1016/j.cnsns.2021.105906},
  \url{https://www.sciencedirect.com/science/article/pii/S1007570421002185}.

\end{thebibliography}
\appendix
\section{Relationship between $v$ and the shadowing direction}
\label{appx:shadowing}
Let $\chi = \chi^u + \chi^s$ be the direct sum decomposition of $\chi$ into its $E^u$ and $E^s$ components. In our notation, $\chi^s = \stableProjection \chi.$ In Ni \cite{angxiu-s3}, the shadowing vector field $v^{\rm sh}$ is given by the following expression, which has also been derived in \cite{qiqi}\cite{mario}.  
\begin{align}
    v^{\rm sh} = \sum_{n=0}^\infty d\varphi^n \chi^s  + \sum_{n=1}^\infty d\varphi^{-n} \chi^u.
\end{align}
Note that each series is well-defined since at every $x \in M$, $\|d\varphi^n_x \chi^s_x\| \leq C \lambda^n \|\chi^s_x\|$ and $\|d\varphi^{-n}_x \chi^u_x\| \leq C \lambda^n \|\chi^u_x\|.$ Hence, we can apply the operator $\unOrthProjection$ to the two sequences $\sum_{n=0}^N d\varphi^n \chi^s$ and $\sum_{n=1}^N d\varphi^{-n} \chi^u$, and the resulting sequences converge uniformly. Recall that since $\unOrthProjection \stableProjection = \unOrthProjection,$ $\unOrthProjection(I - \stableProjection)$ is zero. Thus, applying $\unOrthProjection$ on the above expression makes the second series on the right hand side vanish. Hence, using the remark following the proof of Lemma \ref{lem:existenceAndUniquenessOfv},
\begin{align}
    \unOrthProjection v^{\rm sh} = \sum_{n=0}^\infty \unOrthProjection \: d\varphi^n \: \chi^s = \sum_{n=0}^\infty \unOrthProjection \: d\varphi^n \: \stableProjection \chi = v.
\end{align}

\end{document}